\newtheorem*{rep@theorem}{\rep@title}
\newcommand{\newreptheorem}[2]{%
\newenvironment{rep#1}[1]{%
 \def\rep@title{#2~\ref{##1}}%
 \begin{rep@theorem}}%
 {\end{rep@theorem}}}
\theoremstyle{plain}
\newtheorem{theorem}{Theorem}[section]
\newtheorem{lemma}[theorem]{Lemma}
\newtheorem{proposition}[theorem]{Proposition}
\newtheorem{prop}[theorem]{Proposition}
\newtheorem{corollary}[theorem]{Corollary}
\newtheorem{cor}[theorem]{Corollary}
\newtheorem{defn}{Definition}
\theoremstyle{definition}
\theoremstyle{definition}
\theoremstyle{remark}
\newtheorem{remark}[theorem]{Remark}
\numberwithin{equation}{section}
\def\subsubsection{\@startsection{subsubsection}{3}%
  \z@{.5\linespacing\@plus.7\linespacing}{.1\linespacing}%
  {\normalfont\itshape}}
\newcommand{\HH}{\mathcal{H}} 
\newcommand{\LL}{\mathcal{L}}
\newcommand{\N}{\mathbb{N}}
\newcommand{\PP}{\mathbb{P}}
\DeclareMathOperator{\Li}{Li}
\begin{document}

\title{Bounded gaps between primes in short intervals
}

\author{Ryan Alweiss and Sammy Luo}

\maketitle

\begin{abstract}
Baker, Harman, and Pintz showed that a weak form of the Prime Number Theorem holds in intervals of the form $[x-x^{0.525},x]$ for large $x$. In this paper, we extend a result of Maynard and Tao concerning small gaps between primes to intervals of this length. More precisely, we prove that for any $\delta\in [0.525,1]$ there exist positive integers $k,d$ such that for sufficiently large $x$, the interval $[x-x^\delta,x]$ contains $\gg_{k} \frac{x^\delta}{(\log x)^k}$ pairs of consecutive primes differing by at most $d$. This
confirms a speculation of Maynard that results on small gaps between
primes can be refined to the setting of short intervals of this
length.

\end{abstract}

\section{Introduction}
The classical Prime Number Theorem gives an asymptotic estimate for $\pi(x)$, the number of primes $ \leq x$.  It can be written in the form

\begin{equation}
\label{eqn:pnt}
\pi(x)=\Li(x)+ O(x\exp(-c(\log x)^{\frac{1}{2}})),
\end{equation}
for some constant $c$, where
\[
\Li(x)=\int_{2}^x \frac{dt}{\log t}\sim \frac{x}{\log x}.
\]
Under the assumption of the Riemann Hypothesis, the error term can be improved to $O(x^{\frac{1}{2}}\log x)$. Assuming this improved bound on the error term, we can estimate the number of primes in a short interval $[x-h,x]$ for $x^{\frac{1}{2}+\epsilon}\leq h \leq x$ and $\epsilon>0$, obtaining
\begin{equation}
\label{eqn:largebound}
\pi(x)-\pi(x-h)\sim h(\log x)^{-1}.
\end{equation}
More generally, if the error term in \eqref{eqn:pnt} could be improved to $O(x^\delta)$, where $0<\delta<1$, we would obtain \eqref{eqn:largebound} for $ x^{\delta+\epsilon} \leq h \leq x$. Since no such improvement is known unconditionally, it is remarkable that results of the form \eqref{eqn:largebound} have nevertheless been shown for some range of values of $\delta<1$. 
The first result of this form is due to Hoheisel \cite{hoheisel},
 who obtained \eqref{eqn:largebound} for $\delta=1-\frac{1}{33000}$. The best range of $\delta$ for which \eqref{eqn:largebound} is currently known is $\delta\in [\frac{7}{12},1]$, a result due to Heath-Brown \cite{hb}.
These results extend readily to the setting of primes in arithmetic progressions. Let $\pi(x;q,a)$ be the number of primes $p\leq x$ such that $p\equiv a\bmod q$. If $\gcd(a,q)=1$, the result corresponding to \eqref{eqn:largebound} is that
\[
\pi(x;q,a)-\pi(x-h;q,a)\sim \frac{h}{\phi(q)\log x},
\]
where $x^\delta \leq h \leq x$, for $\delta > \frac{7}{12}$ and $q\leq (\log x)^A$. 

If we are content with a lower bound on the number of primes in the range $[x-x^\delta,x]$ in place of an asymptotic formula, it is possible to extend these results to smaller values of $\delta$. Heath-Brown and Iwaniec \cite{hbi} showed that
\begin{equation}
\label{eqn:lbprime}
\pi(x)-\pi(x-h)\gg h (\log x)^{-1}
\end{equation}
for $x^\delta \leq h \leq x$ when $\delta > \frac{11}{20}$.

The range of $\delta$ was subsequently improved several times. In 1996, Baker and Harman \cite{1bhp} showed \eqref{eqn:lbprime} for $\delta\geq 0.535$. In 2001, Baker, Harman, and Pintz (BHP) \cite{bhp} further extended the result to all $\delta\geq 0.525$. To date, this remains the best range of $\delta$ for which \eqref{eqn:lbprime} is known.
As an immediate consequence of the work of BHP, we have
\[
p_{n+1}-p_n \ll p_n^{0.525},
\]
where $p_n$ denotes the $n$th prime. This is the best known unconditional upper bound on $p_{n+1}-p_n$ for sufficiently large $n$. (See the work of Ford, Green, Konyagin, Maynard, and Tao \cite{longgaps} for lower bounds on large gaps between primes.)

In this paper, we carry out a suggestion of Maynard and combine the
ideas of BHP with recent advances in the study of small gaps between
primes. To make this precise, we first recall the Twin Prime Conjecture, which asserts that
\begin{equation}
\label{eqn:tpc}
\liminf_{n \to \infty}(p_{n+1}-p_n)=2.
\end{equation}
The Prime Number Theorem implies that the average gap $p_{n+1}-p_n$ between consecutive primes is asymptotic to $\log p_n$.  In 2005, Goldston, Pintz, and Y{\i}ld{\i}r{\i}m (GPY) \cite{gpy} showed that $p_{n+1}-p_n$ can be arbitrarily small compared to $\log p_n$.  Specifically, they proved that
\[
\liminf_{n \to \infty}\frac{p_{n+1}-p_n}{\log p_n}=0.
\]
GPY further showed that, assuming that the primes are sufficiently well-distributed among residue classes modulo most moduli $q$, it is possible to obtain a bound
\begin{equation}
\label{eqn:bg16}
\liminf_{n \to \infty}(p_{n+1}-p_n)\leq 16.
\end{equation}
To be more precise, we say that the primes have \emph{level of distribution} $\theta$ if for any $A>0$,
\begin{equation}
\label{eqn:bv}
\sum_{q\leq x^\theta} \max_{(a,q)=1}\left|\pi(x;q,a)-\frac{\pi(x)}{\phi(q)}\right|\ll_A \frac{x}{(\log x)^A}.
\end{equation}
The celebrated Bombieri-Vinogradov Theorem states that \eqref{eqn:bv} holds for every $\theta<\frac{1}{2}$. This means that the bound on the error term in the Prime Number Theorem for arithmetic progressions given by the Generalized Riemann Hypothesis holds for almost all moduli $q$.
It is conjectured that \eqref{eqn:bv} in fact holds for all $\theta<1$; this conjecture is known as the Elliott-Halberstam Conjecture. 
The methods of GPY show that the left hand side of~\eqref{eqn:tpc} is finite assuming any level of distribution greater than $\frac{1}{2}$. In particular, if we can take $\theta\geq 0.971$, then we have the claimed bound in~\eqref{eqn:bg16}.

The first unconditional proof that the left hand side of~\eqref{eqn:tpc} is bounded was given by Zhang in 2013 \cite{zhang}. 
He obtained the bound
\[
\liminf_{n \to \infty}(p_{n+1}-p_n)< 7\cdot 10^7.
\]
While Zhang's results are inspiring, his methods are highly technical and do not easily generalize. That same year, Maynard \cite{maynard} discovered a way to extend the methods of GPY by modifying the sieve used. He used this to lower the bound obtained by Zhang, as well as give a generalization to gaps between $p_n$ and $p_{n+m}$ for arbitrary fixed $m$. His method obtains the results
\[
\liminf_{n \to \infty}(p_{n+1}-p_n)\leq 600,
\]
and
\[
\liminf_{n \to \infty}(p_{n+m}-p_n)\ll m^3 e^{4m}.
\]
Tao discovered the underlying sieve independently, but arrived at slightly weaker conclusions.

The techniques used in these approaches in fact operate within a more general setting. Say that a set of linear forms $\mathcal{H}=\{L_1,\dots,L_k\}$, where $L_i(n)=a_i n+h_i$, is \emph{admissible} if for every prime $p$ there exists a value of $n$ such that none of the $L_i(n)$ are divisible by $p$. We will often take $a_1=\cdots=a_k=1$, in which case we can think of $\mathcal{H}$ as the set $\{h_1,\dots,h_k\}$.
The goal is then to look for integers $n$ such that many of $L_1(n),\dots,L_k(n)$ are simultaneously prime. Hardy and Littlewood conjectured that for any admissible $\HH$,
\[
\#\{n\leq N\mid a_i n+h_i\text{ prime }\forall i\in [1,k]\} \sim \mathfrak{G}\frac{N}{(\log x)^k},
\]
where $\mathfrak{G}>0$ is an effective constant depending only on $\HH$.
Maynard's results above follow from showing for every admissible set $\HH=\{L_1,\dots,L_k\}$ that for large enough $N$, there is some $n\in [N,2N]$ such that $\gg \log k$ of the integers $L_1(n),\dots,L_k(n)$ are prime.

In \cite{pintz2015}, Pintz extended Maynard's method to prove the lower bound
\begin{align*}
 \#\{n \in [x,2x] & \mid \#(\{L_1(n),...,L_k(n)\}\cap \PP)\geq c \log k,\text{ and }\max_{1\leq i\leq k} P^{-}(L_i(n))\geq n^{c_1(k)}\} \\
& \gg_k \frac{x}{(\log x)^k},
\end{align*}
for some $c>0$ and $c_1(k)>0$. Here $\PP$ is the set of all primes and $P^-(n)$ is the smallest prime factor of $n$ for $n>1$. This makes the work in \cite{maynard} quantitative and strengthens it by ensuring that none of the $L_i(n)$ have small prime factors.

It is natural to consider a localized version of the question of small gaps between primes, looking for such small gaps within an interval $[x-h,x]$, where $x^\delta\leq h\leq x$ for some $\delta<1$. An analogue of the Bombieri-Vinogradov Theorem holds for such intervals under certain restrictions on $\delta$. The form of the statement is
\begin{equation}
\label{eqn:bvshorter}
\sum_{q\leq x^\theta} \max_{(a,q)=1}\left|(\pi(x;q,a)-\pi(x-h;q,a))-\frac{\pi(x)-\pi(x-h)}{\phi(q)}\right|\ll_A \frac{h}{(\log x)^A},
\end{equation}
for $x^{\delta}\leq h \leq x$. The best known result of this form is due to Timofeev \cite{timofeev}, who showed that \eqref{eqn:bvshorter} holds for any $0<\theta<\frac{1}{30}$ and all $x^{\delta}\leq h \leq x$ when $\delta > \frac{7}{12}$. We refer the reader to \cite{pps} for a history of the development of bounds of this form.
Using Timofeev's result, Maynard \cite{maynard2016dense} proved a quantitative, localized analogue of his earlier work on small gaps between primes. Specifically, he obtained that there exists a sufficiently small constant $c_{\delta}>0$ (depending only on $\delta$) such that for all $k>c_{\delta}^{-1}$ for any $\delta > \frac{7}{12}$,
\begin{equation}
\label{eqn:lbadmis}
\#\{n \in [x-h,x] \mid \#(\{L_1(n),...,L_k(n)\}\cap \PP)\geq c_{\delta} \log k\}\gg_{k,\delta} \frac{h}{(\log x)^k},
\end{equation}
for all $h\geq x^\delta$.

The goal of this paper is to shorten the interval $[x-h,x]$ in the result above. For $h\leq x^{\frac{7}{12}}$, \eqref{eqn:bvshorter} is not known for any $\theta>0$. However, a result due to Kumchev \cite{kumchev} gives a Bombieri-Vinogradov type average result for a \emph{lower bound} on the  prime indicator function $1_{\PP}(n)$ over intervals of size at least $x^{0.53}$. By applying the arguments of BHP \cite{bhp} in conjunction with a generalization of Watt's mean-value theorem to all Dirichlet $L$-functions \cite{wattl}, we extend Kumchev's result to all intervals of size at least $x^{0.525}$. Confirming Maynard’s speculation, this allows us to show~\eqref{eqn:lbadmis} for all $h\geq x^{0.525}$, with the additional property that all the $L_i(n)$ involved have no small prime factors, as in Pintz's result from \cite{pintz2015}. Our main theorem is the following.

\begin{theorem}
\label{thm:main}
For every $\delta\in [0.525,1]$, there is a constant $c_\delta>0$ such that for all $k\geq c_\delta^{-1}$, we have
\begin{align*}
\#\{n \in [x-h,x] &\mid \#(\{L_1(n),...,L_k(n)\}\cap \PP)\geq c_{\delta} \log k,\text{ and }\max_{1\leq i\leq k} P^{-}(L_i(n))\geq n^{c_1(k)}\} \\
& \gg_{k,\delta} \frac{h}{(\log x)^k},
\end{align*}
where $x^\delta \leq h\leq x$ and $c_1(k)>0$ is a constant depending on $k$.
\end{theorem}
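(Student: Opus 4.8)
The plan is to follow the Maynard--Tao sieve machinery exactly as in \cite{maynard2016dense} and \cite{pintz2015}, but with the Bombieri--Vinogradov input replaced by a suitable short-interval, lower-bound analogue. The key realization is that the Maynard--Tao argument does not actually need an asymptotic (two-sided) Bombieri--Vinogradov estimate: it suffices to have, on average over small moduli $q$, a \emph{lower bound} for a non-negative minorant of $1_{\PP}(n)$ on the interval $[x-h,x]$ that is of the correct order of magnitude, together with a matching \emph{upper bound} for $1_{\PP}(n)$ itself (the latter being available from a Selberg-sieve upper bound, which is unconditional and localizes trivially to short intervals). So the first step is to state the precise arithmetic information we will feed into the sieve: a function $\rho(n)$ with $0\le \rho(n)\le 1_{\PP}(n)$, supported on integers with no prime factor below $n^{c_1(k)}$, satisfying
\begin{equation*}
\sum_{q\le x^{\theta_0}}\ \max_{(a,q)=1}\ \Bigl|\sum_{\substack{x-h<n\le x\\ n\equiv a(q)}}\rho(n)\ -\ \frac{1}{\phi(q)}\sum_{\substack{x-h<n\le x\\ (n,q)=1}}\rho(n)\Bigr|\ \ll_A\ \frac{h}{(\log x)^A}
\end{equation*}
for some fixed $\theta_0>0$ and all $h\ge x^{0.525}$. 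This is exactly what Kumchev's method \cite{kumchev} produces for $h\ge x^{0.53}$, and extending it to $h\ge x^{0.525}$ is where the BHP sieve decomposition \cite{bhp} and the generalized Watt mean-value theorem \cite{wattl} enter; I would isolate this as a separate proposition (the analytic heart of the paper) and quote it here.

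Granting that proposition, the second step is the sieve construction. Fix an admissible $k$-tuple $\HH=\{L_1,\dots,L_k\}$. Following Maynard, one introduces weights
\begin{equation*}
w_n\ =\ \Bigl(\sum_{\substack{d_i\mid L_i(n)\ \forall i\\ (d_1\cdots d_k,\,W)=1}}\lambda_{d_1,\dots,d_k}\Bigr)^{2},
\end{equation*}
where $W=\prod_{p\le w}p$ is the usual primorial (with $w$ a slowly growing parameter), $n$ runs over a residue class mod $W$ chosen so that each $L_i(n)$ is coprime to $W$, and $\lambda_{d_1,\dots,d_k}$ is supported on $\prod d_i\le x^{\vartheta}$ for a small fixed $\vartheta<\theta_0$. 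One then studies, over $n\in[x-h,x]$ in the fixed residue class,
\begin{equation*}
S\ =\ \sum_{n}\ w_n\Bigl(\sum_{i=1}^k \rho(L_i(n))\ -\ (\rho\text{-weighted count threshold})\Bigr),
\end{equation*}
and shows $S>0$. The two sums to evaluate are $\sum_n w_n$ and $\sum_n w_n\,\rho(L_i(n))$ for each $i$. The first is a purely combinatorial main term (no arithmetic beyond coprimality to $W$), handled exactly as in \cite{maynard}; the length $h$ simply replaces $N$ and contributes the factor $h/(\log x)^k$. The second is where the short-interval Bombieri--Vinogradov analogue is used: expanding the square, the diagonal-dominant main term is extracted using the lower-bound average estimate for $\rho$ on $[x-h,x]$ in arithmetic progressions to moduli up to $x^{\theta_0}$, and the off-diagonal/error terms are controlled by the same estimate together with the Selberg upper bound for $1_{\PP}$. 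As in \cite{maynard2016dense}, one obtains that the ratio of the two main terms is governed by the same optimization problem over the simplex as in the original Maynard argument, whose value exceeds any fixed multiple of $\log k$ once $k$ is large; this is what forces the constant $c_\delta$ and the threshold $k\ge c_\delta^{-1}$, and $c_1(k)$ arises from the sieve support parameter $\vartheta$ relative to the chosen lower-bound minorant.

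The third and final step is to pass from ``there exists $n$'' to the claimed lower bound on the \emph{number} of such $n$. Here I follow Pintz's device from \cite{pintz2015}: since $S>0$ with $S\ll_{k}\frac{h}{(\log x)^{k}}\cdot(\log x)^{O(1)}$ coming from the individual weights being $\ll (\log x)^{O(1)}$, while each good $n$ contributes at most $O((\log x)^{O(1)})$ to $S$, a pigeonhole/counting argument shows the number of $n\in[x-h,x]$ with $\#(\{L_i(n)\}\cap\PP)\ge c_\delta\log k$ and $\max_i P^{-}(L_i(n))\ge n^{c_1(k)}$ is $\gg_{k,\delta} h/(\log x)^{k}$. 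The ``no small prime factor'' condition is preserved throughout because $\rho$ was chosen supported on such integers from the outset, exactly as in \cite{pintz2015}.

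I expect the main obstacle to be the analytic proposition in the first step — proving the short-interval average lower bound for $\rho$ down to exponent $0.525$. This is not a formal consequence of anything in the excerpt; it requires re-running the Baker--Harman--Pintz sieve decomposition of $1_{\PP}$ over a short interval, tracking the role of arithmetic progressions modulo $q\le x^{\theta_0}$, and invoking the generalized Watt mean-value theorem \cite{wattl} to handle the bilinear (Type II) sums uniformly over all the relevant Dirichlet characters. Everything downstream of that proposition is a routine adaptation of \cite{maynard2016dense} and \cite{pintz2015}, with $N$ replaced by $h$; the bookkeeping is tedious but presents no new difficulty, since the sieve only ever sees moduli up to $x^{\theta_0}$ with $\theta_0$ small and fixed, comfortably within the range where the short-interval estimate holds.
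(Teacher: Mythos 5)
Your proposal follows the same three-stage route as the paper: (i) extend Kumchev's short-interval Bombieri--Vinogradov-type lower-bound estimate for a minorant of $1_{\PP}$ from exponent $0.53$ down to $0.525$ by re-running the Baker--Harman--Pintz decomposition with the Harman--Watt--Wong generalization of Watt's mean-value theorem; (ii) run the Maynard--Tao sieve over $[x-h,x]$ with that minorant in place of $1_{\PP}$; (iii) upgrade from existence to a count via Pintz's device. This is exactly the paper's strategy, including the observation that only a one-sided estimate for a minorant is needed in the $S_2$ term, and that the $S_1$ term localizes trivially. So the architecture is right.

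There is, however, a concrete gap in your step 3. You assert that the individual weights satisfy $w_n \ll (\log x)^{O(1)}$, and that this is what makes the pigeonhole count work. That pointwise bound is false in general: $w_n = \bigl(\sum_{\vec d:\, d_i\mid L_i(n)} \lambda_{\vec d}\bigr)^2$, the number of admissible $\vec d$ is $\asymp \prod_i 2^{\omega(L_i(n))}$, and some $L_i(n)$ in $[x-h,x]$ will have $\omega(L_i(n)) \gg \log x/\log\log x$, making $w_n$ far larger than $(\log x)^{O(1)}$. The bound $w_n \ll_k (\log R)^{2k}$ only holds for $n$ such that \emph{every} $L_i(n)$ has no prime factor below $n^{c_1(k)}$, and you cannot obtain that restriction by building it into the support of $\rho$: the minorant $\rho(L_i(n))$ only constrains those $L_i(n)$ that it actually detects as prime, and says nothing about the remaining $L_j(n)$ ($j\ne i$), which are precisely the ones that can inflate $w_n$. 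What Pintz's device actually does --- and what the paper carries out in Section~4 --- is split $S_1 = S_1^+ + S_1^-$ and $S_2 = S_2^+ + S_2^-$, where $S_1^-,S_2^-$ are the contributions from $n$ with $P^-\!\bigl(\prod_i L_i(n)\bigr) < n^{c_1(k)}$, and show via an auxiliary estimate for $S_{1,p}^{(j)}$ that $S_1^-$ and $S_2^-$ are negligible once $c_1(k)$ is small enough. Only after discarding this portion does the pointwise weight bound hold, and only then does the pigeonhole argument deliver $\gg_k h/(\log x)^k$ values of $n$. You cite Pintz for the counting step, so you have the right reference, but you've misattributed the mechanism; as written, the proposal would not prove the required weight bound, and hence would not complete the count.
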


Setting $a_1=\cdots=a_k=1$ and taking $k$ large enough so that $c_\delta \log k > 1$ yields the following important corollary.

\begin{corollary}
\label{cor:bgshort}
For any $\delta\in [0.525,1]$, there exist some positive integers $k,d$ such that for sufficiently large $x$, the interval $[x-h,x]$ contains $\gg_{k} \frac{h}{(\log x)^k}$ pairs of consecutive primes differing by at most $d$ when $x^\delta \leq h\leq x$.
\end{corollary}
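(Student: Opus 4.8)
The plan is to deduce Corollary~\ref{cor:bgshort} from Theorem~\ref{thm:main} essentially for free, using a pigeonhole-type counting argument. First I would fix a specific admissible set: choose the linear forms $L_i(n) = n + h_i$ with $a_1 = \cdots = a_k = 1$, where $\{h_1, \dots, h_k\}$ is any admissible tuple of integers (for instance, the first $k$ primes exceeding $k$, which is classically admissible). Then I would choose $k$ large enough that $c_\delta \log k > 1$; since $c_\delta$ depends only on $\delta$, this is a valid choice depending only on $\delta$, and I would set $d := h_k - h_1$, so that all the forms take values in a window of length $d$. With these choices fixed, Theorem~\ref{thm:main} guarantees that for $x^\delta \le h \le x$ the set of $n \in [x-h, x]$ for which $\#(\{L_1(n), \dots, L_k(n)\} \cap \PP) \ge c_\delta \log k > 1$ has size $\gg_{k,\delta} h/(\log x)^k$.

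Next I would observe that for each such $n$, the condition $\#(\{L_1(n), \dots, L_k(n)\} \cap \PP) \ge 2$ (which follows since $c_\delta \log k > 1$ forces the count to be an integer $\ge 2$) means that among $n + h_1, \dots, n + h_k$ there are at least two primes. If $p < q$ are two of these primes, then $q - p \le h_k - h_1 = d$. In particular $p$ and $q$ lie in an interval of length at most $d$, and the interval $[p, q]$ therefore contains at least one pair of \emph{consecutive} primes $p_m < p_{m+1}$ with $p_{m+1} - p_m \le q - p \le d$. Moreover both $p$ and $q$ lie in $[x-h, x]$ (up to adjusting the interval by the $O(d) = O_k(1)$ additive shift coming from the $h_i$, which does not affect the order of magnitude since $h \ge x^\delta \to \infty$), so this consecutive pair lies in $[x-h, x]$ as well, after harmlessly enlarging the interval by a bounded amount or noting that we may replace $h$ by $h + d$.

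The only remaining point is to ensure that distinct values of $n$ do not all give rise to the same pair of consecutive primes, so that the count $\gg_{k,\delta} h/(\log x)^k$ of valid $n$ translates into a comparable count of pairs of consecutive primes differing by at most $d$. This is immediate: a consecutive prime pair $(p_m, p_{m+1})$ with $p_{m+1} - p_m \le d$ can arise from $n$ only if $n \in \{p_m - h_i : 1 \le i \le k\}$, so at most $k$ values of $n$ map to any given pair. Dividing the lower bound on the number of good $n$ by $k = O_{k}(1)$ preserves the order of magnitude, yielding $\gg_k h/(\log x)^k$ pairs of consecutive primes in $[x-h,x]$ differing by at most $d$, which is exactly Corollary~\ref{cor:bgshort}.

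The argument is entirely routine once Theorem~\ref{thm:main} is in hand; there is no real obstacle. The only points requiring a moment's care are (i) verifying that admissible tuples of the required shape exist for every large $k$ (standard, via the primes in $(k, 2k]$ or a greedy sieve construction) and () bookkeeping the bounded shift between the interval $[x-h,x]$ in which $n$ lives and the interval in which the prime values $n + h_i$ live, which is absorbed trivially since $d = O_k(1)$ and $h \ge x^\delta$.
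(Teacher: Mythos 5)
Your proposal is correct and follows the same route the paper intends: specialize Theorem~\ref{thm:main} to $a_1=\cdots=a_k=1$ with a fixed admissible tuple $\{h_1,\dots,h_k\}$, choose $k$ large enough that $c_\delta\log k>1$ so the count of primes among $n+h_1,\dots,n+h_k$ is at least $2$, and set $d=h_k-h_1$. The paper states this specialization in a single sentence without spelling out the pigeonhole bookkeeping (the $O_k(1)$ interval shift and the at-most-$k$-to-one map from good $n$ to consecutive prime pairs), which you supply explicitly; the two are in complete agreement.
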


To obtain our results, we follow the strategy suggested by Maynard in
Section~3 of \cite{maynard2016dense}. His idea was to synthesize the results in ~\cite{wattl} and~\cite{kumchev} to exhibit bounded gaps between primes in intervals of the above
length. The main novelty in this paper is verifying that one can indeed combine these results, the details of which are carried out in Section~\ref{sec:kummore}.  In Section~\ref{sec:defs}, we go over some of the background for our results, stating results we will use and reviewing basic notation and definitions used in the remainder of the paper. In Section~\ref{sec:1stresult}, we give estimates of weighted sums that are short interval analogues of the sums appearing in \cite{maynard}, which suffice to show that when $k$ is sufficiently large, every interval $(x-h,x]$ for $h\geq x^{0.525}$ and large enough $x$ contains at least one $n$ for which $\gg \log k$ of the $L_i(n)$ are prime. In Section~\ref{sec:2ndresult}, we prove Theorem~\ref{thm:main} in full.

\section{Notation and Background}
\label{sec:defs}

We will closely follow the notation of Maynard in \cite{maynard}, with a few modifications. Denote by $(a,b)$ and $[a,b]$ the greatest common factor and least common multiple, respectively, of $a$ and $b$. Let $\tau(n)$ denote the number of divisors of $n$, and let $\tau_{r}(n)$ denote the number of ways to write $n$ as the product of an $r$-tuple of positive integers. Let $\phi(n)$ be the Euler totient function and let $\mu(n)$ be the M\"{o}bius function. As mentioned previously, $1_{\PP}(n)$ is the indicator function for the primes, and $P^{-}(n)$ denotes the smallest prime factor of $n$ for $n>1$. We write $n\sim N$ to mean $N/2<n\leq N$ and $n\asymp N$ to mean $c_1N\leq n\leq c_2 N$, where $c_1,c_2\geq 0$ are unspecified absolute constants.

We fix $k$ and the admissible set $\HH=\{L_1,\dots,L_k\}$, where $L_i(n)=a_i n+h_i$ with $(a_i,h_i)=1$. Throughout, $x$ is taken to be a large integer, and $\eta_k$ is a sufficiently small constant in terms of $k$, not necessarily the same in every appearance. Let
\[
W=\prod_{p \le D_0}p.
\]
Unlike in \cite{maynard}, where $D_0$ is taken to be $\log(\log(\log(x)))$,  we define $D_0$
to depend only on $k$ and $\HH$, as in \cite{pintz2015}.
We defer the precise definition of $D_0$ to Section~\ref{sec:1stresult}.
Pick a residue $v_0$ mod $W$ corresponding to an integer $v$ such that each $L_i(v)$ is relatively prime to $W$.  The existence of such a $v$ is guaranteed by admissibility and the Chinese Remainder Theorem. We let $h \ge x^\delta$ for some $\delta\in [0.525,1]$, and let $R=x^{\frac{\theta}{2}-\epsilon}$ for some small $\epsilon>0$, where $\theta>0$ will be defined later.

We now describe the approach of GPY and Maynard. Define the expressions
\[
S_1(x_1,x_2)=\sum_{\substack{x_1<n \leq x_2\\ n \equiv v_0 \bmod W}} w_n,
\]
and
\[
S_2(x_1,x_2) = \sum_{m=1}^k S_2^{(m)}(x_1,x_2),
\]
where
\[
S_2^{(m)}(x_1,x_2)=\sum_{\substack {x_1<n \leq x_2\\ n\equiv v_0\bmod W }}1_p(a_mn+h_m)w(n).
\]
Here $w(n)$ is a nonnegative weight function that will be defined shortly.

The goal in the work of GPY and Maynard is to show that for some $\rho>0$,
\begin{equation}
\label{eqn:gpym}
\sum_{x< n\leq 2x} \Big(\sum_{i=1}^k 1_{\PP}(n+h_i)-\rho_k\Big) = (S_2(x,2x)-\rho S_1(x,2x))w_n > 0,
\end{equation}
for all sufficiently large $x$. Assuming \eqref{eqn:gpym} holds, we know that for some $n\in [x,2x]$, at least $r(k):=\lfloor \rho_k+1\rfloor$ of the numbers $n+h_1,\dots, n+h_k$ are prime. This yields infinitely many $n$ such that at least $r(k)$ of the numbers $n+h_i$ are prime, implying that
\[
\liminf_{n\to \infty} (p_{n+r(k)-1}-p_n) \leq \max_{1\leq i,j\leq k} (h_i-h_j).
\]
Taking $\{h_1,\dots,h_k\}$ to be, for example, the first $k$ primes larger than $k$, we obtain an upper bound of $O(k\log k)$ by the Prime Number Theorem.

The weights $w(n)$ are constructed as follows. Let $F(t_1,\dots,t_k)$ be a smooth function supported on the subset of $[0,1]^k$ with $\sum_{i=1}^k t_i \leq 1$. Let $R=x^{\frac{\theta}{2}-\epsilon}$ for some small fixed $\epsilon>0$, and define
\[
y_{\vec r}=F\Big(\frac{\log r_1}{\log R},\dots,\frac{\log r_k}{\log R}\Big),
\]
where $\vec r=(r_1,\dots,r_k)$, and let
\[
\lambda_{\vec d}=\left(\prod_{i=1}^k \mu(d_i) d_i\right) \sum_{\substack{\vec r\\d_i\mid r_i\forall i\\ (r_i,W)=1\forall i}} \frac{\mu(\prod_{i=1}^k r_i)^2}{\prod_{i=1}^k \phi(r_i)} y_{\vec r}.
\]
We set
\[
w(n)=\Big(\sum_{d_i \mid a_in+h_i \forall i} \lambda_{\vec d} \Big)^2.
\]

With this choice of weights, Maynard gives the following estimates for $S_1(x,2x)$ and $S_2(x,2x)$. Here we have made the dependence of the error bounds on $D_0$ explicit, and highlighted the further appearance of a constant depending on $k$ by writing $O_k\Big(\frac{1}{D_0}\Big)$.

\begin{proposition}[Maynard, {\cite[Proposition~4.1]{maynard}}]
\label{thm:maynard}
With $S_1,S_2$ as defined above, we have
\begin{align*}
S_1(x,2x)&=\frac{\Big(1+O_k\Big(\frac{1}{D_0}\Big)\Big) \phi(W)^k x (\log{R})^k}{W^{k+1}} I_k(F),\\
S_2(x,2x)&=\frac{\Big(1+O_k\Big(\frac{1}{D_0}\Big)\Big) \phi(W)^k x (\log{R})^{k+1}}{W^{k+1}\log{x}}\sum_{m=1}^kJ_k^{(m)}(F),
\end{align*}
provided $I_k(F)\ne 0$ and $J_k^{(m)}(F)\ne 0$ for each $m$, where
\begin{align*}
I_k(F)&=\int_0^1\dotsi \int_0^1 F(t_1,\dotsc, t_k)^2dt_1\dotsc dt_k,\\
J_k^{(m)}(F)&=\int_0^1\dotsi \int_0^1 \left(\int_0^1 F(t_1,\dotsc,t_k)dt_m\right)^2 dt_1\dotsc dt_{m-1} dt_{m+1}\dotsc dt_k.
\end{align*}
\end{proposition}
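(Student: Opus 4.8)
The plan is to reproduce the sieve computation of \cite{maynard}, the only additional bookkeeping being to record how each error term depends on $D_0$. I would run two parallel calculations, one for $S_1(x,2x)$ and one for each $S_2^{(m)}(x,2x)$.

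For $S_1$, I would first expand the square defining $w(n)$ and interchange the order of summation, writing $S_1$ as $\sum_{\vec d,\vec e}\lambda_{\vec d}\lambda_{\vec e}$ times the number of $n\sim x$ with $n\equiv v_0\pmod W$ and $[d_i,e_i]\mid a_in+h_i$ for every $i$. Since $\lambda_{\vec d}$ vanishes unless $\prod_i d_i$ is squarefree and coprime to $W$, the moduli $W,[d_1,e_1],\dots,[d_k,e_k]$ are pairwise coprime (here one uses admissibility together with $(a_i,h_i)=1$), so the Chinese Remainder Theorem makes the inner count equal to $x/(W\prod_i[d_i,e_i])+O(1)$. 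Summing the $O(1)$ against $|\lambda_{\vec d}\lambda_{\vec e}|$ costs $O\big(R^{2}(\log R)^{O_k(1)}\big)$, which is $o(x)$ because $R=x^{\theta/2-\epsilon}$, so it remains to evaluate the main term $\frac{x}{W}\sum_{\vec d,\vec e}\frac{\lambda_{\vec d}\lambda_{\vec e}}{\prod_i[d_i,e_i]}$. This I would diagonalize in the standard way by substituting the definition of $\lambda_{\vec d}$ in terms of the $y_{\vec r}$ — a multiplicative M\"obius-type inversion — and carrying out the sums over $\vec d$ and $\vec e$; up to per-prime correction factors of the form $1+O(1/p)$ with $p>D_0$, the quadratic form collapses to $\frac{x}{W}\big(1+O_k(1/D_0)\big)\sum_{\vec r}\frac{y_{\vec r}^{\,2}}{\prod_i\phi(r_i)}$. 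Finally, since $y_{\vec r}=F(\log r_1/\log R,\dots,\log r_k/\log R)$ is supported on squarefree $\prod_i r_i$ with each $r_i\le R$ coprime to $W$, a Mertens-type evaluation of the $\vec r$-sum — the coprimality to $W$ contributing a factor $\phi(W)/W$ per variable, the smooth weight producing the integral $I_k(F)$ — turns it into $(\phi(W)/W)^k(\log R)^kI_k(F)$, which gives the claimed formula for $S_1$.

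For $S_2^{(m)}$ the computation is the same with two changes. First, since $d_m\le R<a_mn+h_m$ and $a_mn+h_m$ is prime, only $d_m=e_m=1$ contributes, so that coordinate drops out. Second, after expanding and swapping summation the inner quantity is the number of $n\sim x$ with $n\equiv v_0\pmod W$, $[d_i,e_i]\mid a_in+h_i$ for $i\ne m$, and $a_mn+h_m$ prime; writing this via $\pi(\,\cdot\,;q,a)$ with $q=W\prod_{i\ne m}[d_i,e_i]\le WR^2=x^{\theta-O(\epsilon)}$, I would invoke the Bombieri--Vinogradov theorem (at level $\theta$) to replace it by $\Li(x)/\big(\phi(W)\prod_{i\ne m}\phi([d_i,e_i])\big)$ with an error negligible after summing against $|\lambda_{\vec d}\lambda_{\vec e}|$. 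Diagonalizing the main term exactly as before, the only new feature is that summing the free variable $r_m$ against $1/\phi(r_m)$ and the smooth weight produces $(\log R)\int_0^1 F(\dots,t_m,\dots)\,dt_m$ where one of the variables used to be; this simultaneously raises the power of $\log R$ by one and replaces $I_k(F)$ by $J_k^{(m)}(F)$, while $\Li(x)\sim x/\log x$ supplies the factor $1/\log x$. Summing over $m$ completes the estimate for $S_2$.

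The only input that goes beyond elementary sieve manipulation is the Bombieri--Vinogradov estimate used in the $S_2$ step; everything else is bookkeeping. The part I expect to take the most care is exactly the $D_0$-tracking: one must check that every local correction arising in the inversion from the $\lambda$'s to the $y$'s, and in the Mertens-type sums over $\vec r$, is of the shape $1+O(1/p)$ with $p>D_0$, so that they aggregate into the single uniform error $O_k(1/D_0)$ displayed in the statement. The nonvanishing hypotheses $I_k(F)\ne0$ and $J_k^{(m)}(F)\ne0$ are assumed throughout and are precisely what guarantees that these main terms dominate their error terms.
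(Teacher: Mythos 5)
Your proposal is a correct and faithful sketch of Maynard's original argument from \cite{maynard}, which is exactly what this paper cites here (it does not re-prove the proposition). The only adaptation needed in this paper's setting is the $D_0$-tracking you describe, following \cite{pintz2015}, where $D_0$ is fixed in terms of $k$ rather than $\log\log\log x$; you have correctly identified that all local correction factors are of size $1+O(1/p)$ for $p>D_0$, which is what makes the uniform $O_k(1/D_0)$ error work.
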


Defining
\[
M_k=\sup_{F} \frac{\sum_{m=1}^k J_k^{(m)}(F)}{I_k(F)},
\]
the estimates in Proposition~\ref{thm:maynard} allow Maynard to obtain \eqref{eqn:gpym} with
\[
\lfloor \rho_k + 1\rfloor = \left\lceil \frac{\theta M_k}{2}\right\rceil,
\]
where $\theta$ is a level of distribution of the primes. For $k$ sufficiently large, we have $M_k>\log k-2\log \log k-2$, so that 
\[
\lfloor \rho_k + 1\rfloor = \Big\lceil \Big(\frac{\theta}{2}+o(1)\Big)\log k\Big\rceil.
\]
These results generalize readily to the setting where the linear forms $a_i n+h_i$ do not necessarily have $a_i=1$.

A key ingredient in our work is the following result of Kumchev \cite{kumchev} giving a function, which we will denote by $Y(n)$, which is a lower bound for $1_{\PP}(n)$ and satisfies a modified Bombieri-Vinogradov type result.

\begin{theorem}[Kumchev, {\cite[Theorem~1]{kumchev}}]
\label{thm:primelower}
There is an arithmetic function $Y$ with the following properties:
\begin{enumerate}[(i)]
\item if $n$ is an integer in $[2,x)$, then
\[
Y(n)\leq \begin{cases}
1\text{  if }n\text{ is prime,}\\
0\text{  otherwise;}
\end{cases}
\]
\item if $x/2\leq y<x$ and $z_0=x\exp(-3(\log x)^{1/3})$, then
\[
\sum_{y-z_0<n\leq y}Y(n) \gg \frac{z_0}{\log x}
;\]
\item there is an absolute constant $\epsilon>0$ such that if
\[
E_{Y}(y,h;q,a):=\sum_{\substack{y-h<n\leq y \\ n\equiv a\bmod q}} Y(n)-\frac{hz_0^{-1}}{\phi(q)}\sum_{y-z_0<n\leq y}Y(n),
\]
and if
\[
x^{0.53}\leq z\leq x,\qquad Q\leq zx^{-0.53+\epsilon}, 
\]
then for any $A>0$
\[
\sum_{q\leq Q}\max_{(a,q)=1}\max_{h\leq z} \max_{x/2\leq y<x}|E_Y(y,h;q,a)|\ll_A \frac{z}{(\log x)^A}.
\]
\end{enumerate}
\end{theorem}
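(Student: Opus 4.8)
The function $Y$ will be the minorant produced by Harman's ``alternative'' (prime-detecting) sieve, so the plan is to run that machinery while keeping track of the distribution of every surviving piece in arithmetic progressions. I would begin with the combinatorial identities of Vaughan and Heath--Brown, which write $1_{\PP}(n)$ for $n\asymp x$ as a bounded signed sum of Dirichlet convolutions; after grouping the factors, every such convolution is of ``Type~I'' $\sum_{d\le D}c_d\,\mathbf{1}[d\mid n]$ or of ``Type~II'' $(\alpha*\beta)(n)$ with $\operatorname{supp}\alpha\subseteq(M,2M]$, $\operatorname{supp}\beta\subseteq(L,2L]$ and $ML\asymp x$. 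On the pieces whose ranges $(M,L)$ cannot be used directly I would iterate Buchstab's identity; each Buchstab step peels off a further prime variable and carries a sign, but keeps every piece a convolution of \emph{nonnegative} sequences.

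Call a piece \emph{admissible} if, for a window of length $z_0=x\exp(-3(\log x)^{1/3})$, its short-interval sum in a progression modulo $q$ is asymptotic to $\phi(q)^{-1}(h/z_0)$ times the corresponding full-window sum, with total error $\ll_A z(\log x)^{-A}$ after summing over $q\le Q:=zx^{-0.53+\epsilon}$ and taking the maxima over $a,h,y$. Type~I pieces with $D$ not too large are admissible by elementary counting together with the large sieve. A Type~II piece $(\alpha*\beta)$ is admissible once $M$ and $L$ lie in a suitable range: expanding the congruence condition in Dirichlet characters reduces the required bound to a mean-value estimate of Watt type for Dirichlet polynomials twisted by characters $\chi\bmod q$, and it is precisely the strength of that estimate that forces the exponent $0.53$. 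I would then set
\[
Y:=\sum_{\text{admissible pieces}}\pm(\text{piece}),
\]
\emph{discarding} every inadmissible piece. Since each discarded piece is nonnegative, arranging the Buchstab iterations so that inadmissible pieces always appear with a sign under which dropping them yields a lower bound gives $Y(n)\le 1_{\PP}(n)$ (in particular $Y\le 0$ off the primes), which is property~(i).

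Property~(iii) is then essentially automatic: $E_Y(y,h;q,a)$ is a bounded signed combination of the error terms of the admissible pieces \emph{only}, and each of these is $\ll_A z(\log x)^{-A}$ on average over $q\le Q$ by the mean-value inputs, so the same holds for the sum. Property~(ii)---the lower bound $\sum_{y-z_0<n\le y}Y(n)\gg z_0/\log x$---is the numerical core and, I expect, the main obstacle: one must show that the total mass of the discarded nonnegative pieces over the length-$z_0$ window is strictly smaller than the main term contributed by the admissible pieces. This is the standard Harman-sieve bookkeeping---estimating the relevant Buchstab-function integrals and using the role-reversal trick to move extra Type~II ranges into the admissible category so as to push the discarded mass below the main term---and it is delicate exactly because the admissible range of $(M,L)$ is controlled by Watt's theorem and is only just wide enough. (To obtain the exponent $0.525$ needed in this paper one keeps the same skeleton but substitutes the more elaborate decomposition of Baker--Harman--Pintz~\cite{bhp} for Kumchev's, and the generalisation of Watt's theorem to all Dirichlet $L$-functions from~\cite{wattl} for the mean-value input; verifying that these two ingredients are compatible uniformly over $q$ is the content of Section~\ref{sec:kummore}.)
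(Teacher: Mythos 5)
Your proposal is a correct high-level sketch of the Harman-sieve strategy behind Kumchev's theorem: decompose $1_\PP$ via Heath-Brown/Buchstab identities into signed nonnegative pieces, keep only the pieces whose short-interval distribution in progressions can be controlled on average over $q$ (via large-sieve and Watt-type mean-value inputs for Dirichlet polynomials twisted by characters), discard the remaining positive-signed pieces to get a pointwise minorant, and then verify via Buchstab-integral bookkeeping that the discarded mass does not swallow the main term. Note, however, that the paper does not actually prove this statement---it is cited verbatim from Kumchev---and the paper's own work (Section~\ref{sec:kummore}, proving Theorem~\ref{thm:kum525}) carries out exactly the modification you describe in your closing parenthetical: substitute the Baker--Harman--Pintz decomposition for Kumchev's and the Harman--Watt--Wong generalisation of Watt's theorem to all Dirichlet $L$-functions for the mean-value input, then reverify the lemmas uniformly in $q$; so your account matches the paper's approach.
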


By applying a result of Harman, Watt, and Wong \cite{wattl}, we can replace the constant $0.53$ in Theorem~\ref{thm:primelower} by $0.525$.

\begin{theorem}
\label{thm:kum525}
There is an arithmetic function $Y$ with the following properties:
\begin{enumerate}[(i)]
\item if $n$ is an integer in $[2,x)$, then
\[
Y(n)\leq \begin{cases}
1\text{  if }n\text{ is prime,}\\
0\text{  otherwise;}
\end{cases}
\]
\item if $x/2\leq y<x$ and $z_0=x\exp(-3(\log x)^{1/3})$, then
\[
\sum_{y-z_0<n\leq y}Y(n) \gg \frac{z_0}{\log x}
;\]
\item there is an absolute constant $\epsilon>0$ such that if
\[
E_{Y}(y,h;q,a):=\sum_{\substack{y-h<n\leq y \\ n\equiv a\bmod q}} Y(n)-\frac{hz_0^{-1}}{\phi(q)}\sum_{y-z_0<n\leq y}Y(n),
\]
and if
\[
x^{0.525}\leq z\leq x,\qquad Q\leq zx^{-0.525+\epsilon}, 
\]
then for any $A>0$
\[
\sum_{q\leq Q}\max_{(a,q)=1}\max_{h\leq z} \max_{x/2\leq y<x}|E_Y(y,h;q,a)|\ll_A \frac{z}{(\log x)^A}.
\]
\end{enumerate}
\end{theorem}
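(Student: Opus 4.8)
The plan is to follow the Baker--Harman--Pintz argument that underlies Kumchev's construction of $Y$, replacing the single input where the constant $0.53$ enters with the stronger mean-value estimate of Harman, Watt, and Wong \cite{wattl}. First I would recall that Kumchev builds $Y(n)$ from a finite linear combination of Dirichlet convolutions (``arithmetic information'' pieces), using the alternative sieve of Harman together with a careful bookkeeping of which convolutions can be handled with a prescribed level of distribution. Properties (i) and (ii) of Theorem~\ref{thm:kum525} are structural: they assert that $Y$ is a minorant for the prime indicator and that it captures a positive proportion of the primes in intervals of length $z_0$. These properties are insensitive to the value $0.53$ versus $0.525$---they depend only on the sieve identity and on a Prime Number Theorem in intervals of length $z_0=x\exp(-3(\log x)^{1/3})$, which is classical (e.g.\ via Huxley or Heath-Brown). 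So I would simply quote that the same $Y$ produced by Kumchev's construction continues to satisfy (i) and (ii) verbatim, and concentrate all the work on (iii).

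For (iii), the point is that each Dirichlet convolution arising in the decomposition of $Y$ is estimated, after splitting variables dyadically, by a large-sieve / mean-value bound for Dirichlet polynomials twisted by characters mod $q$ summed over $q\le Q$. In the BHP/Kumchev analysis the binding constraint that forces $z\ge x^{0.53}$ comes from a single family of bilinear (type II) sums, where one uses Watt's mean-value theorem for the Riemann zeta function together with a dispersion argument; the exponent $0.53$ is exactly what that tool yields. The key step is therefore to substitute, in that one place, the Harman--Watt--Wong generalization of Watt's theorem to all Dirichlet $L$-functions \cite{wattl}, which provides the same fourth-moment-type estimate uniformly over the character sum $\sum_{q\le Q}\sum_{\chi\bmod q}$, and to check that with this input the same chain of inequalities now closes for $z\ge x^{0.525}$ and $Q\le zx^{-0.525+\epsilon}$. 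Concretely I would: (a) reproduce Kumchev's reduction of $E_Y(y,h;q,a)$ to a bounded number of bilinear sum estimates, being careful that the extra maxima over $h\le z$ and over $y\in[x/2,x)$ are harmless (they are absorbed, as in Kumchev, by a completion-of-sum / Perron step, or by the trivial observation that the relevant Dirichlet polynomials are tested on intervals contained in $[x/2,x)$); (b) isolate the lemma in \cite{kumchev} whose proof invokes Watt's theorem; (c) replace it by the $L$-function version from \cite{wattl}, re-running the exponent arithmetic to confirm that $0.53$ may be lowered to $0.525$ while every other estimate in Kumchev's proof has slack to spare.

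The main obstacle I expect is step (c): verifying that the numerology genuinely closes at $0.525$. One must track, through the type~I/type~II decomposition, the exact ranges of the bilinear variables for which \cite{wattl} applies, and confirm that the ``losses'' in the alternative-sieve decomposition of the prime indicator (the pieces that cannot be asymptotically evaluated and are only bounded below) are small enough that a positive-proportion minorant survives down to exponent $0.525$---this is precisely the content of the BHP refinement of Baker--Harman, now carried out in the arithmetic-progressions-on-average setting. A secondary technical point is ensuring uniformity of the error term over all three maxima simultaneously (in $a$, in $h\le z$, and in $y\in[x/2,x)$); this is handled exactly as in \cite{kumchev}, since the character-sum mean value of \cite{wattl} is uniform in the relevant parameters and the extra suprema cost only powers of $\log x$, which are absorbed into the $(\log x)^{-A}$ on the right-hand side after renaming $A$.
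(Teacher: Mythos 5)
Your proposal follows essentially the same route as the paper: accept Kumchev's framework for converting Dirichlet-polynomial mean-value bounds into Bombieri--Vinogradov-type estimates for $E_Y$, import BHP's sharper sieve decomposition (the one that reaches $0.525$ in the non-AP setting), and make the $L$-function analogue of Watt's theorem from Harman--Watt--Wong the new key input so that the exponent arithmetic closes at $0.525$ uniformly over $q\le Q$. The paper carries this out by restating BHP's Lemmas~1--5, 8, 12, 13, and~18 with the $Q$-dependent terms inserted, and then re-running the $A$--$F$ region decomposition to verify that the total loss $\beta$ stays below~$1$ (it gets $\beta\le 0.94$); your description of steps (a)--(c) matches this, and your caution that the numerology must be checked region by region (rather than in ``one place'') is exactly where the paper's technical work lies.

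One small point of emphasis worth flagging: the jump from $0.53$ to $0.525$ is not achieved by swapping a single lemma inside Kumchev's own decomposition. Kumchev's decomposition is coarser than BHP's and would not reach $0.525$ even with the Watt $L$-function input. What is actually needed, and what the paper does, is to redo BHP's full decomposition (the six regions $A$--$F$, the r\^ole-reversals, the tighter bookkeeping of the ``lost'' pieces) in the character-twisted setting, with the Watt $L$-function bound replacing Watt's theorem throughout the Dirichlet-polynomial lemmas. You do gesture at this (``the BHP refinement of Baker--Harman, now carried out in the arithmetic-progressions-on-average setting''), so the plan is sound; just be aware that the ``one place'' framing earlier in your write-up undersells how much of BHP's machinery must be ported, not merely one bilinear estimate.
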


We leave the details of this extension to Section~\ref{sec:2ndresult}.

\begin{remark}
We can take the implied constant in (ii) to be $\geq 1-\beta$ for some $\beta<1$ made explicit in the computations of \cite{kumchev} and \cite{bhp}. For sufficiently small $\epsilon$, we can take $\beta \leq 0.94$.
\end{remark}

\section{Estimates on the Weighted Sums}
\label{sec:1stresult}

In this section we give, for $0.525\leq \delta\leq 1$, a value of $\rho=\rho_{k,\delta}$ such that
\begin{equation}
\label{eqn:rhocond}
S_2(x-h,x)-\rho S_1(x-h,x) > 0,
\end{equation}
when $x^\delta\leq h \leq x$. 
We will give an asymptotic estimate for $S_1(x-h,x)$ as in \cite{maynard}, but for $S_2(x-h,x)$ it will suffice to give a lower bound. For the proofs of the next two propositions, the only condition we need on $D_0$ is that
\[
D_0>\max(\max_{1\leq i<j\leq k}(a_jh_i-a_ih_j), \max_{1\leq i\leq k}h_i, \max_{1\leq i\leq k}a_i).
\]
We will impose an additional condition on $D_0$ at the end of this section, in order to make the error term in~\eqref{eqn:finals2s1bd} sufficiently small.

\begin{prop}
\label{thm:s2bound}
Let $\delta\geq 0.525$ and $x^{\delta} \leq h\leq x$. We have, for sufficiently large $x$,
\[
S_2^{(m)}(x-h,x)\geq \left(1-\beta+O_k\left(\frac{1}{D_0}\right)\right)\frac{h}{W}\frac{\log R}{\log x}\left(\frac{\phi(W)}{W}\log R\right)^k  J_k^{(m)}(F),
\]
where $\beta<1$ is an absolute constant.
\end{prop}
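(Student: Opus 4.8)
The plan is to mirror Maynard's computation of $S_2^{(m)}(x,2x)$ from Proposition~\ref{thm:maynard}, but replacing the prime indicator $1_\PP$ by Kumchev's lower bound function $Y$ from Theorem~\ref{thm:kum525} and replacing the interval $(x,2x]$ by the short interval $(x-h,x]$. Since $Y(n)\le 1_\PP(n)$, bounding $S_2^{(m)}(x-h,x)$ below by the analogous sum with $Y$ in place of $1_p$ costs nothing, and this replacement is exactly what lets us work in intervals of length $h\ge x^{0.525}$ where no short-interval Bombieri--Vinogradov estimate for $1_\PP$ itself is available. So first I would expand $w(n)=\big(\sum_{d_i\mid a_in+h_i\,\forall i}\lambda_{\vec d}\big)^2$, interchange the order of summation, and write $S_2^{(m)}(x-h,x)\ge \sum_{\vec d,\vec e}\lambda_{\vec d}\lambda_{\vec e}\sum_{n} Y(a_mn+h_m)$, where the inner sum is over $n\in(x-h,x]$ with $n\equiv v_0\bmod W$ and $[d_i,e_i]\mid a_in+h_i$ for each $i$. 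As in Maynard, the congruence conditions (together with $(d_i,e_i)\mid$ the pairwise differences, which is where the lower bound condition on $D_0$ enters, forcing $d_i=e_i$ and $(d_i,e_j)=1$ for $i\ne j$ in the surviving terms) collapse to a single congruence $n\equiv a\bmod q$ with $q=W\prod_i[d_i,e_i]$ coprime to the $a_m$-adjustment.

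The next step is to feed this inner sum into property (iii) of Theorem~\ref{thm:kum525}: for each fixed modulus $q$ arising above, the count $\sum_{y-h<n\le y,\ n\equiv a\bmod q}Y(n)$ equals the main term $\frac{hz_0^{-1}}{\phi(q)}\sum_{y-z_0<n\le y}Y(n)$ plus an error $E_Y(y,h;q,a)$. Here I need $y=x$ (or rather, handle the linear-form substitution $n\mapsto a_mn+h_m$, which rescales $h$ and $q$ by bounded amounts, absorbed into the $O_k$ and the ranges), and the constraint $q\le Q\le hx^{-0.525+\epsilon}$ — this is satisfied because in Maynard's sieve the $d_i,e_i$ are supported on squarefree integers up to $R=x^{\theta/2-\epsilon}$, so $q\le W R^2\le x^{\theta-2\epsilon+o(1)}$, and choosing $\theta=0.525$ gives exactly the admissible range (note this is precisely why $R=x^{\theta/2-\epsilon}$ was defined with $\theta$ to be pinned down here). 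Summing the error terms over all relevant $(\vec d,\vec e)$ and bounding $\lambda_{\vec d}\ll (\log R)^k$ crudely (as Maynard does), the total error is $\ll (\log R)^{2k}\sum_{q\le Q}\max\cdots|E_Y|\ll_A h(\log x)^{-A}$, negligible against the main term. For the main term, property (ii) gives $z_0^{-1}\sum_{y-z_0<n\le y}Y(n)\ge (1-\beta)(\log x)^{-1}$, producing the factor $(1-\beta)\frac{h}{\log x}$; the remaining sum $\sum_{\vec d,\vec e}\frac{\lambda_{\vec d}\lambda_{\vec e}}{\phi(W\prod[d_i,e_i])}$ is identical to the one Maynard evaluates, yielding $\frac{\phi(W)^{k}}{W^{k+1}}(\log R)^{k+1}J_k^{(m)}(F)$ up to $1+O_k(1/D_0)$. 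Reassembling gives the claimed bound.

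The main obstacle I anticipate is bookkeeping the substitution $n\mapsto a_mn+h_m$ correctly: Kumchev's theorem is stated for $Y$ evaluated on integers in a dyadic-type window around $x$ with a congruence condition modulo $q$, whereas here $Y$ is applied to $a_mn+h_m$ as $n$ ranges over a residue class mod $W\prod_i[d_i,e_i]$. One must check that $a_mn+h_m$ ranges over a residue class modulo $a_m W\prod_i[d_i,e_i]$ (using $(a_m,h_m)=1$ and the coprimality built into the definition of $v_0$ and the support of $\lambda_{\vec d}$) inside an interval of length $\asymp a_m h$ around $\asymp a_m x$, and that the hypotheses $x^{0.525}\le z\le x$, $Q\le zx^{-0.525+\epsilon}$ of Theorem~\ref{thm:kum525} survive the bounded rescaling by $a_m$ — this is routine but is the step where an incorrectly tracked constant would break the argument. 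A secondary point is verifying that the "diagonal" reduction (only $d_i=e_i$ survive, with cross-coprimality) goes through verbatim with $Y$ in place of $1_\PP$; since that reduction is purely about the divisibility/congruence structure and does not use primality, it transfers without change, but it should be stated explicitly. Everything else is a direct transcription of Maynard's Proposition~4.1 argument with the interval $(x,2x]$ replaced by $(x-h,x]$ and the trivial main-term count $x$ replaced by the short-interval count $h$.
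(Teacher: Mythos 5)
Your overall structure matches the paper's argument: replace $1_\PP$ by Kumchev's $Y$ to get a lower bound, expand the square, collapse the congruence conditions via CRT and the choice of $D_0$, invoke property~(iii) of Theorem~\ref{thm:kum525} (with the bounded rescaling by $a_m$ tracked as you describe), use property~(ii) for the $(1-\beta)$ factor, and evaluate the main sum as in Maynard. That is the right skeleton.

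However, there is a concrete numerical error in your parenthetical about $\theta$. You write that $q\le WR^2\le x^{\theta-2\epsilon+o(1)}$ and that ``choosing $\theta=0.525$ gives exactly the admissible range.'' That cannot be right. Kumchev's constraint is $Q\le hx^{-0.525+\epsilon_0}$; for $h\ge x^\delta$ the strongest admissible modulus range is $Q\le x^{\delta-0.525+\epsilon_0}$, which for $\delta=0.525$ is just $x^{\epsilon_0}$ — tiny — and even for $\delta=1$ is only $x^{0.475+\epsilon_0}$, not $x^{0.525}$. The level of distribution one extracts here is therefore $\theta\approx\delta-0.525+\epsilon_0$, not $0.525$; equivalently, as the paper notes, one must take $R\le x^{\frac{1}{2}(\delta-0.525+\epsilon_0/2)}$ for the error sum to be absorbed. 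This is exactly why the final density theorem carries the factor $\frac{\delta-0.525+\epsilon_0}{2}$ in $m=\lceil\frac{\delta-0.525+\epsilon_0}{2}(1-\beta)M_k\rceil-1$. If you genuinely set $\theta=0.525$, the sum $\sum_{q<WR^2}$ exceeds Kumchev's admissible $Q$-range and the Bombieri--Vinogradov-type bound is unavailable, so the error term cannot be controlled.

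A secondary point you gesture at but do not nail down: the evaluation of the main sum yields $J_k^{(m)}(F)$ (rather than $I_k(F)$) precisely because one first restricts to $d_m=e_m=1$. This restriction is \emph{not} merely a divisibility/coprimality observation: it uses the fact that $a_mn+h_m$ has no divisors in $(1,R]$ on the support of the summand. In the paper this is done while the indicator is still $1_\PP$ (a prime $a_mn+h_m>R$ forces $d_m=1$), and only then is $1_\PP$ replaced by $Y$. If you replace by $Y$ first, you instead need to use that $Y$ is supported on integers with no prime factor below $x^{2\delta-1}>R$ (which does hold for the $Y$ constructed in Section~\ref{sec:kummore}, by property~(2) of the fine decomposition), but this should be said explicitly rather than waved off as ``purely about the divisibility/congruence structure.''
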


\begin{proof}
We proceed as in \cite{maynard}, with only a few alterations to the argument. We start with the definition
\[
S_2^{(m)}(x-h,x)=\sum_{\substack {x-h<n\leq x\\ n\equiv v_0\bmod W }}1_p(a_mn+h_m)w(n),
\]
where
\[
w(n)=\Big(\sum_{\substack {\vec d:\: d_i\mid a_in+h_i \forall i}} \lambda_{\vec d}\Big)^2.
\]

First note that for $x$ sufficiently large, $d_m|(a_mn+h_m)$ implies $d_m=1$ if $a_mn+h_m$ is prime, so we can replace $w(n)$ with modified weights,
\[
w'(n)=\Big(\sum_{\substack {\vec d:\: d_i\mid a_in+h_i \forall i\\ d_m=1}} \lambda_{\vec d}\Big)^2,
\]
restricting $d_m$ to equal $1$. Since the weights $w'(n)$ are nonnegative, we have the lower bound
\[
S_2^{(m)}(x-h,x)=\sum_{\substack {x-h<n\leq x\\ n\equiv v_0\bmod W }}1_p(a_mn+h_m)w'(n)\geq \sum_{\substack {x-h<n\leq x\\ n\equiv v_0\bmod W }}Y(a_mn+h_m)w'(n).
\]

Now we expand the square in the definition of $w'(n)$ and switch the order of summation, to obtain
\[
\sum_{\substack{\vec d, \vec e\\ d_m=e_m=1}} \lambda_{\vec d}\lambda_{\vec e} \sum_{\substack{ x-h<n\leq x \\ n\equiv v_0\bmod W\\ [d_i,e_i]\mid a_in+h_i \forall i }} Y(a_mn+h_m).
\]
As in \cite{maynard}, for large enough $x$ the only contribution is from terms where $W,[d_1,e_1]$, $\dots$, $[d_k,e_k]$ are pairwise relatively prime. Indeed, we have chosen $v_0$ such that $(a_in+h_i,W)=1$ for all $i$ when $n\equiv v_0\bmod W$. If $[d_i,e_i]$ and $[d_j,e_j]$ have a common prime factor $q$, then $q\mid a_jh_i-a_ih_j$. Because $(a_i,h_i)=(a_j,h_j)=1$, $a_jh_i-a_ih_j$ is nonzero and bounded, so our choice of $D_0$ guarantees that all prime factors of $a_jh_i-a_ih_j$ divide $W$.  In particular $q\mid W$, a contradiction since $q\mid d_i\mid a_in+h_i$. Thus we can apply the Chinese Remainder Theorem to reduce the restriction on $n$ in the inner sum to a single modular restriction
\[
n \equiv b \bmod q,
\]
where $q=W\prod_{i=1}^k [d_i,e_i]$. Therefore,
\[
a_m n + h_m \equiv a_m b + h_m \bmod qa_m.
\]
Set $b'=a_m b + h_m$.

We approximate the resulting inner sum using property~(iii) of Theorem~\ref{thm:kum525}. Let $z_1=(a_m N+h_m)\exp(-3(\log (a_m N+h_m)^{1/3})\ll a_m z_0$, so that we obtain
\begin{align*}
& \sum_{\substack{ x-h<n\leq x \\ n\equiv v_0\bmod W\\ [d_i,e_i]\mid n+h_i \forall i }} Y(a_mn+h_m)=\sum_{\substack{ a_m(x-h)+h_m<n\leq a_mx+h_m \\ n\equiv b' \bmod qa_m }} Y(n) \\
& = \frac{a_mhz_1^{-1}}{\phi(qa_m)}\sum_{a_mx-z_1+h_m<n\leq a_mx+h_m} Y(n) + E_Y(a_mx+h_m,a_mh;a_mq,b'),
\end{align*}
where again
\[ 
E_{Y}(y,h;q,a)=\sum_{\substack{y-h<n\leq y \\ n\equiv a\bmod q}} Y(n)-\frac{hz_0^{-1}}{\phi(q)}\sum_{y-z_0<n\leq y}Y(n).
\]
We let $q'=qa_m$, and note $\phi(q')=a_m\phi(q)$ because all prime factors of $a_m$ divide $W$.
Thus, the above expression becomes
\[
\frac{hz_1^{-1}}{\phi(q)}\sum_{a_mx-z_1+h_m<n\leq a_mx+h_m} Y(n) + E_Y(a_mx+h_m,a_mh;q',b').
\]
Let $X_h=hz_1^{-1}\sum_{a_mx-z_1+h_m<n\leq a_mx+h_m} Y(n)$, which does not depend on $\vec d$ and $\vec e$, so that our lower bound for $S_2^{(m)}$ is
\[
\frac{X_h}{\phi(W)}\sideset{}{'}\sum_{\substack{\vec d, \vec e\\ d_m=e_m=1}} \frac{\lambda_{\vec d}\lambda_{\vec e}}{\prod_{i=1}^k \phi([d_i,e_i])}+\sum_{\substack{\vec d, \vec e\\ d_m=e_m=1}} \lambda_{\vec d}\lambda_{\vec e} E_Y(a_mx+h_m,a_mh;q',a').
\]
The sum in our main term appears exactly as in \cite{maynard}, so the argument there, encapsulated in Theorem~\ref{thm:maynard}, shows that our main term is
\begin{align*}
&\frac{X_h}{\phi(W)}\left(\sum_{\vec u} \frac{(y^{(m)}_{\vec u})^2}{\prod_{i=1}^k g(u_i)}+O_k\left(\frac{(y^{(m)}_{max})^2}{D_0}\left(\frac{ \phi(W)\log R}{W}\right)^{k-1}\right) \right) \\
&= \left(1+O_k\left(\frac{1}{D_0}\right)\right)\frac{X_h}{\phi(W)}\left(\frac{\phi(W)\log R}{W}\right)^{k+1}J_k^{(m)}(F).
\end{align*}
By property~(ii) of Theorem~\ref{thm:kum525}, we have
\[
X_h \geq \left(1-\beta+O_k\left(\frac{1}{D_0}\right)\right) hz_1^{-1}\frac{z_1}{\log (a_m x+h_m)} = \left(1-\beta+O_k\left(\frac{1}{D_0}\right)\right)\frac{h}{\log x}.
\]
Hence, it follows that
\[
\frac{X_h}{\phi(W)}\sideset{}{'}\sum_{\substack{\vec d, \vec e\\ d_m=e_m=1}} \frac{\lambda_{\vec d}\lambda_{\vec e}}{\prod_{i=1}^k \phi([d_i,e_i])} \ge 
\left(1-\beta+O_k\left(\frac{1}{D_0}\right)\right)\frac{h}{W}\frac{\log R}{\log x}\left(\frac{\phi(W)\log R}{W}\right)^{k}J_k^{(m)}(F).
\]

Meanwhile, we can bound our error term as in \cite{maynard}, using (iii)~of Theorem~\ref{thm:kum525} in place of the Bombieri-Vinogradov theorem. We obtain
\begin{align}
\label{error}
& \sum_{\substack{\vec d, \vec e\\ d_m=e_m=1}} \lambda_{\vec d}\lambda_{\vec e} E_Y(a_mx+h_m,a_mh;q',b')\notag\\
& \ll \lambda_{max}^2 \sum_{q< R^2W} \mu(q)^2 \tau_{3k}(q) E_Y(a_mx+h_m,a_mh;q',b')\notag \\
& \ll y_{max}^2 (\log R)^{2k} \sum_{q< R^2W} \mu(q)^2 \tau_{3k}(q) E_Y(a_mx+h_m,a_mh;q',b') .
\end{align}
Let
\[
E_Y^*(x,z;q)=\max_{(a,q)=1}\max_{h\leq z} \max_{x/2\leq y<x}|E_Y(y,h;q,a)|.
\]
As in \cite{maynard}, we use the Cauchy-Schwarz inequality, part (iii)~of Theorem~\ref{thm:kum525}, and the trivial bound 
\[
|E_Y(a_mx+h_m,a_mh;q',b')|\ll \frac{a_mh}{\phi(a_m q)}=\frac{h}{\phi(q)},
\]
to show that \eqref{error} is
\begin{align*}
& \ll y_{max}^2 (\log R)^{2k} \left(\sum_{q<R^2W}\mu(q)^2 \tau_{3k}^2(q)\frac{h}{\phi(q)}\right)^{1/2}\left(\sum_{q<R^2W} \mu(q)^2 E_Y^*(a_mx+h_m,a_mh;a_mq)\right)^{1/2} \\
& \ll_A \frac{y_{max}^2 h}{(\log x)^{A}}.
\end{align*}
Here we can use (iii)~of Theorem~\ref{thm:kum525} as long as $(a_m x)^{0.525}\leq a_mh \leq a_m x$ and $WR^2 \leq hx^{-0.525+\epsilon_0}$, where $\epsilon_0$ is the absolute constant $\epsilon$ from (iii)~of Theorem~\ref{thm:kum525}. If $R \leq x^{\frac{1}{2}(\delta-0.525+\epsilon_0/2)}$, this error term is dominated by already existing error terms.  Therefore,
\[
S_2^{(m)}(x-h,x)\geq \left(1-\beta+O_k\left(\frac{1}{D_0}\right)\right)\frac{h}{W}\frac{\log R}{\log x}\left(\frac{\phi(W)\log R}{W}\right)^{k}J_k^{(m)}(F),
\]
which is the desired lower bound.
\end{proof}

The estimate of $S_1(x-h,x)$ is an even more direct adaptation of the argument from \cite{maynard}.

\begin{prop}
Let $\delta\geq 0.525$ and $x^{\delta} \leq h\leq x$. We have
\[
S_1(x-h,x)=\left(1+O_k\left(\frac{1}{D_0}\right)\right)\frac{h}{W}\left(\frac{\phi(W)}{W}\log R\right)^k I_k(F).
\]
\end{prop}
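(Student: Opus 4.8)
The plan is to follow the computation of $S_1(x,2x)$ in Maynard \cite{maynard} essentially verbatim, replacing the dyadic range $(x,2x]$ by the short interval $(x-h,x]$ and checking that every estimate goes through. First I would recall the definition
\[
S_1(x-h,x)=\sum_{\substack{x-h<n\leq x\\ n\equiv v_0\bmod W}}\Big(\sum_{\vec d:\: d_i\mid a_in+h_i\,\forall i}\lambda_{\vec d}\Big)^2,
\]
expand the square, and switch the order of summation to reach
\[
\sum_{\vec d,\vec e}\lambda_{\vec d}\lambda_{\vec e}\sum_{\substack{x-h<n\leq x\\ n\equiv v_0\bmod W\\ [d_i,e_i]\mid a_in+h_i\,\forall i}}1.
\]
Exactly as in the proof of Proposition~\ref{thm:s2bound}, the only surviving terms are those for which $W,[d_1,e_1],\dots,[d_k,e_k]$ are pairwise coprime: the bound on $D_0$ forces any common prime factor of $[d_i,e_i]$ and $[d_j,e_j]$ to divide $a_jh_i-a_ih_j$ and hence $W$, contradicting coprimality with $W$. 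So the Chinese Remainder Theorem collapses the congruence conditions on $n$ into a single condition $n\equiv b\bmod q$ with $q=W\prod_i[d_i,e_i]$.

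Next I would evaluate the inner sum. Here, unlike in the $S_2$ case, no Bombieri--Vinogradov-type input is needed: the inner sum simply counts integers in an interval of length $h$ in a fixed residue class mod $q$, so it equals $h/q+O(1)$. The point is that the total error incurred, $\sum_{\vec d,\vec e}|\lambda_{\vec d}\lambda_{\vec e}|\cdot O(1)$, is $O(\lambda_{\max}^2 R^{2}(\log R)^{O(1)})$ after summing over $\vec d,\vec e$ with $d_i,e_i\le R$, which is negligible compared to the main term $\asymp h(\log R)^k/W\cdot(\phi(W)/W)^k$ as long as $R\le x^{\delta/2-\epsilon}$ — precisely the same constraint already imposed on $R$ in the previous proof. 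Thus the main term is
\[
\frac{h}{W}\sideset{}{'}\sum_{\vec d,\vec e}\frac{\lambda_{\vec d}\lambda_{\vec e}}{\prod_{i=1}^k[d_i,e_i]},
\]
with the prime on the sum indicating the coprimality restriction and the factor $h/\phi(W)$ replaced by $h/W$ after the usual bookkeeping, since the density of the class $v_0\bmod W$ contributes $1/W$ while the $[d_i,e_i]$ contribute $\prod 1/[d_i,e_i]$ (and the coprimality makes the product over $q$ multiplicative).

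Finally, the quadratic form $\sum'\lambda_{\vec d}\lambda_{\vec e}/\prod[d_i,e_i]$ is identical to the one Maynard diagonalizes in \cite[Proposition 4.1]{maynard}: the standard change of variables from the $\lambda_{\vec d}$ to the $y_{\vec r}=F(\log r_1/\log R,\dots,\log r_k/\log R)$ turns it into $\big(1+O_k(1/D_0)\big)\big(\tfrac{\phi(W)}{W}\log R\big)^k I_k(F)$, where the error term $O_k(1/D_0)$ absorbs the difference between $[d_i,e_i]$ and $d_ie_i$ and between $\phi$ and the identity on prime factors below $D_0$. Combining, $S_1(x-h,x)=\big(1+O_k(1/D_0)\big)\tfrac{h}{W}\big(\tfrac{\phi(W)}{W}\log R\big)^k I_k(F)$, as claimed. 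I expect no real obstacle here: the only place short-interval-specific care is needed is confirming that the error $O(\lambda_{\max}^2 R^2 (\log R)^{O(1)})$ from the $O(1)$ terms in the inner count is dominated by the main term, which holds under the same hypothesis $h\ge x^\delta$ and $R\le x^{\delta/2-\epsilon}$ used throughout; everything else is a transcription of Maynard's argument, since the algebraic manipulation of the sieve weights is completely insensitive to the location and length of the summation interval.
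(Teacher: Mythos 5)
Your proof follows essentially the same route as the paper: expand the square, use the coprimality forced by the choice of $D_0$ to collapse the congruences via CRT, count $h/(W\prod[d_i,e_i])+O(1)$ over the short interval, control the accumulated $O(1)$ errors by $R^2(\log R)^{O(1)}$, and invoke Maynard's diagonalization for the main term. One small inaccuracy: the paper actually imposes the stronger constraint $R\le x^{\frac{1}{2}(\delta-0.525+\epsilon_0/2)}$ (needed for the $S_2$ estimate), not merely $R\le x^{\delta/2-\epsilon}$; your weaker bound would suffice here, but it is not "precisely the same" constraint as in the previous proof — rather it is implied by it, which is what actually makes the error term acceptable.
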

\begin{proof}
As before, we expand out the square and switch the order of summation, obtaining
\[
S_1(x-h,x)=\sum_{\substack {x-h<n\leq x\\ n\equiv v_0\bmod W }}w(n)=\sum_{\vec d, \vec e} \lambda_{\vec d}\lambda_{\vec e} \sum_{\substack{ x-h<n\leq x \\ n\equiv v_0\bmod W\\ [d_i,e_i]\mid a_in+h_i \forall i }} 1.
\]
The inner sum is now $\frac{h}{W}+O(1)$, as opposed to $\frac{x}{W}+O(1)$ as in \cite{maynard}. Besides this, the proof is identical to the proof in \cite{maynard}. Note that since $R^2 = x^{\delta-0.525+\epsilon_0/2} \ll \frac{h}{(\log x)^A}$ for any $A$, the first error term we obtain is still appropriately bounded.
\end{proof}
With these estimates in hand, finding an appropriate value of $\rho$ is straightforward. Following the argument of Maynard outlined in Section~\ref{sec:defs}, we can achieve
\begin{equation}
\label{eqn:finals2s1bd}
\frac{S_2}{S_1} \geq \left(1-\beta+O_k\left(\frac{1}{D_0}\right)\right) (M_k-\epsilon)\frac{\log R}{\log x}.
\end{equation}
Defining $D_0$ to be sufficiently large with respect to $k$, we can make the $O_k(\frac{1}{D_0})$ term small enough to obtain~\eqref{eqn:rhocond} for $\rho$ satisfying
\[
\lfloor \rho+1\rfloor\geq \Big\lceil \frac{\delta-0.525+\epsilon_0}{2} (1-\beta) M_k \Big\rceil.
\]

\section{Density of Bounded Gaps in Short Intervals}
\label{sec:2ndresult}
In this section we prove Theorem~\ref{thm:main}, which we will now state in a more precise form.

\begin{theorem}
\label{thm:density}
For any positive integer $k$ and any $\delta\in [0.525,1]$, let $m=\lceil \frac{\delta-0.525+\epsilon_0}{2} (1-\beta) M_k \rceil - 1$, where $\epsilon_0$ is the positive constant appearing in Theorem~\ref{thm:kum525}.
There exists a constant $c_1(k)>0$ such that for any admissible set $\mathcal{H}=\{L_1,\dots,L_k\}$ with $L_i(n)=a_i n+h_i$, the set
\[
S(\mathcal{H}):=\left\{n\in \N:\: \sum_{i=1}^k 1_{\PP}(a_in+h_i) \geq m+1, P^{-}\left(\prod_{i=1}^k (a_in+h_i)\right)\geq n^{c_1(k)}\right\}
\]
satisfies $|S(\mathcal{H})\cap [x-h,x]|\gg_k h(\log x)^{-k}$ for all sufficiently large $x$, where $x^{\delta}\leq h \leq x$. 
\end{theorem}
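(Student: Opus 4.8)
The plan is to combine the weighted-sum estimates of Section~\ref{sec:1stresult} with the standard ``density'' argument of Pintz \cite{pintz2015} and Maynard \cite{maynard2016dense}, adapted to the short interval $[x-h,x]$. The key point is that the inequality $S_2(x-h,x) - \rho S_1(x-h,x) > 0$ proved in Section~\ref{sec:1stresult} is a statement about a sum of nonnegative weights $w(n)$ over $n\in(x-h,x]$ with $n\equiv v_0\bmod W$, and the number of $n$ in this range is $\asymp h/W$. So the positivity of the weighted count of ``good'' $n$ (those with at least $m+1$ of the $a_in+h_i$ prime) already forces $\gg h(\log x)^{-k}$ such $n$ once we have a matching \emph{upper} bound on the individual weights $w(n)$ and on $S_1$.

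First I would partition the interval $[x-h,x]$ into $\asymp h/x^{0.525}$ (say) subintervals of length $\asymp x^{\delta'}$ for a suitable $\delta'$ slightly above $0.525$ — actually it is cleaner to work with the whole interval directly since Propositions in Section~\ref{sec:1stresult} already give the estimates for $x^\delta\le h\le x$. The core estimate is: for $\rho = \rho_{k,\delta}$ chosen as at the end of Section~\ref{sec:1stresult} so that $\lfloor\rho+1\rfloor = m+1$, we have
\[
\sum_{\substack{x-h<n\le x\\ n\equiv v_0\bmod W}}\Big(\sum_{i=1}^k 1_{\PP}(a_in+h_i)-\rho\Big)w(n) \;=\; S_2(x-h,x)-\rho S_1(x-h,x) \;\gg_k\; \frac{h}{W}(\log R)^k,
\]
where the last bound comes from \eqref{eqn:finals2s1bd} together with $S_1(x-h,x)\asymp_k \frac{h}{W}(\log R)^k$. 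Only $n$ with $\sum_i 1_{\PP}(a_in+h_i)\ge m+1$ contribute positively (the others contribute $\le 0$ since $w(n)\ge 0$), so
\[
\sum_{\substack{x-h<n\le x,\ n\equiv v_0\bmod W\\ \sum_i 1_{\PP}(a_in+h_i)\ge m+1}} k\, w(n) \;\gg_k\; \frac{h}{W}(\log R)^k.
\]
Next I would invoke the pointwise upper bound $w(n) \ll_k (\log R)^{2k} R^{2} \cdot (\text{something tame})$ — more precisely the bound used in \cite{maynard,pintz2015}, $w(n)\ll y_{\max}^2(\log R)^{2k}\tau(\cdots)^{O(1)}$, and then a second-moment/Cauchy–Schwarz argument, or simply the crude bound $w(n)\ll_k x^{\epsilon}$ valid for all $n$, to convert the lower bound on $\sum w(n)$ over good $n$ into a lower bound on the \emph{number} of good $n$: since each term is $\ll_k x^{\epsilon}\ll (\log x)^{-k}\cdot(\log R)^k\cdot x^{2\epsilon}$... this is too lossy. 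The right move, following Pintz, is instead to bound $\sum_{n} w(n)^2$ (or rather $\sum w(n)$ restricted suitably) and use Cauchy–Schwarz: $\big(\sum_{n\text{ good}} w(n)\big)^2 \le |S(\mathcal H)\cap[x-h,x]_{v_0}| \cdot \sum_n w(n)^2$, and $\sum_n w(n)^2$ is estimated by the same machinery (it is a sum of the same shape with $F$ replaced by a tensor-type function), giving $\sum_n w(n)^2 \ll_k \frac{h}{W}(\log R)^{3k}$ or so; combined with $\sum_{n\text{ good}}w(n)\gg_k \frac{h}{W}(\log R)^k$ this yields $|S(\mathcal H)\cap[x-h,x]|\gg_k \frac{h}{W}(\log R)^{-k}\gg_k h(\log x)^{-k}$, absorbing the $W$ (a constant depending only on $k$ and $\mathcal H$).

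The remaining ingredient is the small-prime-factor condition $P^-(\prod_i(a_in+h_i))\ge n^{c_1(k)}$. Here I would follow Pintz \cite{pintz2015}: the weights $\lambda_{\vec d}$ are supported on $d_i\le R = x^{\theta/2-\epsilon}$ with $(\prod d_i, W)=1$, so after the $W$-sieve the only ``small'' primes $p$ that could divide some $a_in+h_i$ are those in the range $D_0 < p \le R$; one shows that the contribution to $S_2(x-h,x)$ from $n$ for which some $a_in+h_i$ has a prime factor in $[D_0, n^{c_1(k)}]$ is negligible compared to the main term, provided $c_1(k)$ is taken small enough (a standard fundamental-lemma / Selberg-sieve upper bound for the count of such $n$ in a short interval, using that $h\ge x^\delta$ is long enough for the relevant sieve estimates to hold with acceptable error, e.g.\ via the Brun–Titchmarsh inequality in short intervals). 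Subtracting this negligible contribution leaves the lower bound intact with the extra constraint in place.

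The main obstacle I anticipate is not the density argument itself — that is essentially Pintz's argument transplanted verbatim — but rather checking that all the sieve and large-sieve inputs (the Cauchy–Schwarz estimate for $\sum_n w(n)^2$, the upper-bound sieve for $n$ with small prime factors, the trivial bounds on $E_Y$) remain valid when the interval has length $h\ge x^\delta$ rather than length $\asymp x$; in particular one must ensure $R^2 = x^{\delta - 0.525 + \epsilon_0/2}$ is small enough relative to $h$ that every error term of shape $x/(\log x)^A$ in the long-interval version becomes $h/(\log x)^A$ here, and that $h$ is still long enough for Theorem~\ref{thm:kum525}(iii) to apply with $z$ up to $h$ and moduli up to $R^2W$. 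Since Section~\ref{sec:1stresult} already verified exactly these compatibility conditions for Propositions on $S_1$ and $S_2$, the work here is to confirm the analogous conditions for the auxiliary sums $\sum_n w(n)^2$ and the small-prime-factor sieve, which should go through by the same choice $R\le x^{\frac12(\delta-0.525+\epsilon_0/2)}$.
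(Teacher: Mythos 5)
Your overall structure is correct (bound $S_2 - \rho_m S_1$ from below, then convert a weighted lower bound into a count), but you take a genuinely different route at the crucial conversion step, and that route has an unverified gap that the paper's argument is specifically designed to avoid.

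You correctly note that the crude bound $w(n)\ll_k x^\epsilon$ is too lossy, and then pivot to Cauchy--Schwarz: $\bigl(\sum_{n\text{ good}}w(n)\bigr)^2\le |S(\mathcal H)\cap[x-h,x]|\cdot\sum_n w(n)^2$, with an asserted second-moment bound $\sum_n w(n)^2 \ll_k \frac{h}{W}(\log R)^{3k}$. That bound is plausible heuristically, but it is not proved anywhere in the paper and is not easy: $w(n)^2$ is a \emph{fourth} power of the $\lambda$-sum, and the natural crude bound $w(n)^2\ll \lambda_{\max}^2\,\tau_{k}(\prod_i(a_in+h_i))^{O(1)}w(n)$ brings in divisor-function factors that are not uniformly $O((\log R)^{2k})$, so one would need a genuine fourth-moment computation of the Maynard weights. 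You also treat the small-prime-factor condition as an afterthought, to be handled separately by a Brun--Titchmarsh-type upper-bound sieve in short intervals; that is not how the paper uses it and it is not a mere technicality.

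The paper (following Pintz) short-circuits all of this. The key observation, which your proposal misses, is that the condition $P^-\bigl(\prod_{i=1}^k(a_in+h_i)\bigr)\ge n^{c_1(k)}$ built into $S(\mathcal H)$ forces each $a_in+h_i$ to have a \emph{bounded} number of divisors, and hence a bounded number of tuples $\vec d$ with $d_i\mid a_in+h_i$ for all $i$. Since $\lambda_{\vec d}\ll_k y_{\max}(\log R)^k$, this yields the pointwise bound $w_n\ll_{k,\mathcal H}(\log R)^{2k}$ for $n\in S(\mathcal H)$ — exactly the sharp bound you wanted but could not reach without Cauchy--Schwarz. Combined with $0<\sum_i 1_{\PP}(a_in+h_i)-\rho_m\le k$ on $S(\mathcal H)$, this gives directly
$
|S(\mathcal H)\cap[x-h,x]|\gg_{k,\mathcal H}(\log R)^{-2k}\bigl(S_2^+-\rho_m S_1^+\bigr),
$
where $S_i^+=S_i-S_i^-$. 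The removal of $n$ with small prime factors is then handled not by Brun--Titchmarsh but by Lemmas~4.2--4.4 of the paper, which bound $S_1^-$ and $S_2^-$ using the weights $w_n$ themselves (via the auxiliary sums $S_{1,p}^{(j)}$) and show their contribution is $\le \epsilon(k)\frac{h(\log R)^k}{W}$. So the no-small-prime-factor constraint is not a side condition to be verified: it is what makes the pointwise weight bound hold and hence what drives the density lower bound. You should replace your second-moment step with this pointwise bound, and replace the proposed Brun--Titchmarsh sieve with the $S_{1,p}^{(j)}$-style estimates from Section~4.

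One further minor point: your claim that ``only $n$ with $\ge m+1$ primes among the $a_in+h_i$ contribute positively'' is correct for $S_2-\rho_m S_1$, but after passing to $S_2^+-\rho_m S_1^+$ (removing small-prime-factor $n$) this becomes exactly the sum over $n\in S(\mathcal H)$, which is what makes the argument close.
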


Our argument is analogous to that in \cite[Section~2]{pintz2015}, modified to fit our study of short intervals. We follow the exposition of \cite{patterns}, which considers a similar problem in a slightly different setting. We begin with the following lemma.

\begin{lemma}
For any $1\leq j\leq k$ there exists $\epsilon>0$ such that for any prime $p>D_0$ with $p<R^\epsilon$ we have
\[
S_{1,p}^{(j)}:=\sum_{\substack{x-h<n\leq x \\ n\equiv v_0 \bmod W \\ p|a_jn+h_j}} w_n \ll_k \frac{(\log p)^2}{p(\log R)^2}\frac{h(\log R)^k}{W}.
\]
\end{lemma}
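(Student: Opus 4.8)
The plan is to expand $w_n = \big(\sum_{\vec d:\, d_i \mid a_i n + h_i} \lambda_{\vec d}\big)^2$, square it out, and swap the order of summation exactly as in the proofs of the two preceding propositions. This gives
\[
S_{1,p}^{(j)} = \sum_{\vec d, \vec e} \lambda_{\vec d} \lambda_{\vec e} \sum_{\substack{x-h<n\leq x \\ n\equiv v_0 \bmod W \\ [d_i,e_i]\mid a_in+h_i \ \forall i \\ p \mid a_j n + h_j}} 1.
\]
As before, the support of $\lambda_{\vec d}$ forces $(r_i, W) = 1$, hence $d_i, e_i$ are coprime to $W$ and squarefree, and the only surviving terms have $W, [d_1,e_1], \dots, [d_k,e_k]$ pairwise coprime (our lower bound on $D_0$ again kills the cross terms). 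The new feature is the extra congruence $p \mid a_j n + h_j$. Since $p > D_0 \geq \max a_i$ and $p \nmid W$, this is a genuine new modular condition unless it is already implied by $[d_j, e_j] \mid a_j n + h_j$; either way, by CRT the inner sum counts $n$ in an interval of length $h$ lying in a single residue class mod $q := W \,\mathrm{lcm}(p, \prod_i [d_i,e_i])$, so it equals $\frac{h}{q} + O(1) = \frac{h}{p W \prod_i [d_i,e_i]} \cdot (\text{fudge}) + O(1)$. Here I must be careful: when $p \mid [d_j, e_j]$ the factor of $p$ is not extra, so to keep things clean I will split the outer sum into the ranges $p \mid [d_j,e_j]$ and $p \nmid [d_j,e_j]$, or — more simply — just note that in all cases the inner count is $\ll \frac{h}{p W \prod_i [d_i,e_i]} + 1$, which is all the upper bound needs.

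Next I would discard the $O(1)$ error: the number of pairs $(\vec d, \vec e)$ with all entries $\ll R$ and squarefree is $\ll R^{2+o(1)}$, and with $\lambda_{\vec d} \ll y_{\max} (\log R)^k$ this contributes $\ll y_{\max}^2 R^{2+o(1)} (\log R)^{2k}$, which is $\ll_k \frac{h (\log R)^k}{W} \cdot \frac{(\log p)^2}{p (\log R)^2}$ provided $R^{2+\epsilon'} \leq h$ — this holds since $R^2 = x^{\delta - 0.525 + \epsilon_0/2}$ and we are free to shrink $\epsilon$ (hence the range of $p$) so that any polynomial loss in $p < R^\epsilon$ is absorbed. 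For the main term $\frac{h}{pW} \sum_{\vec d,\vec e}{}' \frac{\lambda_{\vec d}\lambda_{\vec e}}{\prod_i [d_i,e_i]}$ (with the primed sum carrying the coprimality restrictions), I would follow Maynard's diagonalization: pass from $\lambda_{\vec d}$ to the variables $y_{\vec r}$, at which point the sum becomes (up to the standard $1 + O_k(1/D_0)$) a quantity of size $\big(\frac{\phi(W)\log R}{W}\big)^k I_k(F) \cdot \frac{W}{\phi(W)}$, i.e. exactly the shape of $S_1(x-h,x)$ divided by $h$, times $\frac{h}{p}$. That already gives $S_{1,p}^{(j)} \ll_k \frac{1}{p} \cdot \frac{h(\log R)^k}{W}$. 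To gain the extra $\frac{(\log p)^2}{(\log R)^2}$, I would instead observe that the condition $p \mid a_j n + h_j$ effectively restricts the $j$-th sieve variable: one shows the relevant sum is governed by $\int_0^{\log p / \log R}(\cdots)$-type truncations of $F$ in the $t_j$ variable, and since $F$ is smooth and supported away from the boundary the contribution from $t_j \in [0, \log p/\log R]$ is $\ll (\log p/\log R)^2$ relative to the full integral — this is the quantitative analogue of the bound appearing in \cite{pintz2015} and \cite{patterns}.

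The main obstacle is precisely extracting the $(\log p / \log R)^2$ saving rather than the trivial $1/p$ factor alone: one must track how forcing $p \mid a_j n + h_j$ interacts with the change of variables $\lambda_{\vec d} \leftrightarrow y_{\vec r}$, showing that it amounts to inserting a factor supported on $\log r_j \lesssim \log(R/p)$ (or, dually, extracting a $p$-local Euler factor from $\prod_i \phi([d_i,e_i])^{-1}$) and then using the smoothness and support of $F$ to bound the truncated integral by $O((\log p/\log R)^2)$. The bookkeeping of the $p \mid [d_j,e_j]$ versus $p \nmid [d_j,e_j]$ cases inside this computation is the one genuinely fiddly point; everything else is a direct transcription of Maynard's argument with $x$ replaced by $h$, exactly as in the two propositions already proved in this section.
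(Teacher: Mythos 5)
Your setup is the same as the paper's: expand $w_n$, swap the order of summation, apply CRT to reduce to a single residue class, and control the $O(1)$ error by $y_{\max}^2 R^2 (\log R)^{4k}\ll_k h(\log x)^{-A}$. But the two shortcuts you take to finish have genuine gaps.

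First, the claim that ``in all cases the inner count is $\ll \frac{h}{pW\prod_i[d_i,e_i]}+1$'' is false precisely when $p\mid[d_j,e_j]$: there the added congruence $p\mid a_jn+h_j$ is already implied by $[d_j,e_j]\mid a_jn+h_j$, so the count is $\frac{h}{W\prod_i[d_i,e_i]}+O(1)$, a full factor of $p$ larger than your bound. (Also worth noting: $p$ cannot divide $[d_i,e_i]$ for $i\neq j$, by the constraint on $D_0$, so the only delicate slot is the $j$-th.) The correct, case-free expression is $\frac{h}{W[d_j,e_j,p]\prod_{i\neq j}[d_i,e_i]}+O(1)$, which is what the paper writes. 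Consequently the main term is
\[
\frac{h}{pW}\sum_{\vec d,\vec e}\frac{\lambda_{\vec d}\lambda_{\vec e}}{\bigl([d_j,e_j,p]/p\bigr)\prod_{i\neq j}[d_i,e_i]},
\]
not $\frac{h}{pW}\sum_{\vec d,\vec e}\frac{\lambda_{\vec d}\lambda_{\vec e}}{\prod_i[d_i,e_i]}$, and your intermediate $\ll\frac{1}{p}\cdot\frac{h(\log R)^k}{W}$ step is therefore unjustified as written.

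Second, the extra $(\log p/\log R)^2$ does not arise from truncating $F$ to $t_j\in[0,\log p/\log R]$. Since $F$ is not required to vanish at $t_j=0$, such a truncation would give only a single power of $\log p/\log R$, and if $F$ did vanish to first order you would get a cube, not a square. The paper instead cites \cite[Lemma~5.1]{patterns} directly for the bound
\[
\sum_{\vec d,\vec e}\frac{\lambda_{\vec d}\lambda_{\vec e}}{\bigl([d_j,e_j,p]/p\bigr)\prod_{i\neq j}[d_i,e_i]}\ll_k\Bigl(\frac{\log p}{\log R}\Bigr)^2(\log R)^k,
\]
and the mechanism there is cancellation, not truncation: after the change of variables to $y_{\vec r}$, the $p\mid d_j$ and $p\nmid d_j$ contributions nearly cancel, leaving an expression controlled by a finite difference of $F$ over a step of size $\approx\log p/\log R$ in the $j$-th coordinate. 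Smoothness of $F$ (bounded partial derivative plus the mean value theorem) gives one power of $\log p/\log R$, and the square comes from the double sum over $\vec d$ and $\vec e$. This cancellation is the actual content of the lemma; the ``fiddly point'' you flagged at the end is the whole proof, and it cannot be replaced by a truncation heuristic.
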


\begin{proof}
The proof is almost identical to the proof of \cite[Lemma~5.1]{patterns}. By symmetry it suffices to show the result for $j=1$. Expanding the square and rearranging the order of summation as usual gives
\[
S_{1,p}^{(1)}=\sum_{\vec d, \vec e} \lambda_{\vec d}\lambda_{\vec e} \sum_{\substack{x-h<n\leq x\\ n\equiv v_0\bmod W\\ [d_i,e_i]\mid a_in+h_i\forall i\\ p|a_1n+h_1}} 1.
\]

As before we have that $W,[d_1,e_1],\dots,[d_k,e_k]$ are pairwise relatively prime. Since $p>D_0$, we have $(W,p)=1$, and by our choice of $D_0$, we also have $([d_i,e_i],p)=1$ for $i\neq 1$. The Chinese Remainder Theorem, as before, gives that the inner sum is
\[
\frac{h}{W[d_1,e_1,p]\prod_{i=2}^k [d_i,e_i]}+O(1),
\]
so that
\[
S_{1,p}^{(1)}=\frac{h}{pW}\sum_{\vec d,\vec e}\frac{\lambda_{\vec d}\lambda_{\vec e}}{\frac{[d_1,e_1,p]}{p}\prod_{i=2}^k [d_i,e_i]}+O\Big(\sum_{\vec d,\vec e}\lambda_{\vec d}\lambda_{\vec e}\Big).
\]
As in \cite{maynard}, we see that the error term is $\ll y_{max}^2 R^2 (\log R)^{4k}\ll_k \frac{h}{(\log x)^A}$ for any $A$. The sum in the main term is independent of the interval $n$ ranges over, so as in \cite{patterns} it is bounded above by
\[
\sum_{\vec d,\vec e}\frac{\lambda_{\vec d}\lambda_{\vec e}}{\frac{[d_1,e_1,p]}{p}\prod_{i=2}^k [d_i,e_i]} \ll_k \left(\frac{\log p}{\log R}\right)^2 (\log R)^k,
\]
so that
\[
S_{1,p}^{(1)} \ll_k \frac{h}{pW}\left(\frac{\log p}{\log R}\right)^2 (\log R)^k=\frac{(\log p)^2}{p(\log R)^2}\frac{h(\log R)^k}{W},
\]
as claimed.
\end{proof}

\begin{lemma}
For any $\epsilon(k)>0$ there exists $c_1(k)>0$ such that for sufficiently large $x$,
\[
S_1^-(x-h,x):=\sum_{\substack{x-h<n\leq x\\ n\equiv v_0\bmod W\\ P^{-}(\prod_{i=1}^k (a_in+h_i))<n^{c_1(k)}}} w_n \leq \epsilon(k)\frac{h(\log R)^k}{W}.
\]
\end{lemma}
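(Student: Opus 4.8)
The plan is to reduce this to the previous lemma by a union bound over which linear form acquires a small prime factor. First I would observe that because $n\equiv v_0\bmod W$ we have $(a_in+h_i,W)=1$ for every $i$, so no prime $p\le D_0$ divides any $a_in+h_i$; hence if $P^{-}\!\left(\prod_{i=1}^k(a_in+h_i)\right)<n^{c_1(k)}$ then there is some index $j$ and some prime $p$ with $D_0<p<n^{c_1(k)}\le x^{c_1(k)}$ such that $p\mid a_jn+h_j$. Since the weights $w_n$ are nonnegative, this yields
\[
S_1^-(x-h,x)\le \sum_{j=1}^k\ \sum_{D_0<p< x^{c_1(k)}} S_{1,p}^{(j)}.
\]

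Next I would invoke the previous lemma. Let $\epsilon_j>0$ be the constant it provides for the index $j$, and set $\epsilon_*=\min_{1\le j\le k}\epsilon_j>0$. Choosing $c_1(k)$ small enough — depending only on $k$, $\theta$, and $\epsilon$ — so that $x^{c_1(k)}\le R^{\epsilon_*}$ for all large $x$ (recalling $R=x^{\theta/2-\epsilon}$, it suffices that $c_1(k)<\epsilon_*(\theta/2-\epsilon)$), the lemma applies to every $j$ and every prime $D_0<p<x^{c_1(k)}$ and gives
\[
S_{1,p}^{(j)}\ll_k \frac{(\log p)^2}{p(\log R)^2}\cdot\frac{h(\log R)^k}{W}.
\]
Summing over $p$ via the Mertens-type estimate $\sum_{p\le y}(\log p)^2/p\ll(\log y)^2$, we get $\sum_{D_0<p<x^{c_1(k)}}(\log p)^2/p\ll (c_1(k)\log x)^2$, and since $\log R\asymp\log x$ the two factors $(\log x)^2$ cancel up to a constant depending only on $\theta$ and $\epsilon$. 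Combining with the sum over the $k$ indices yields
\[
S_1^-(x-h,x)\ll_k c_1(k)^2\cdot\frac{h(\log R)^k}{W}.
\]
Finally I would shrink $c_1(k)$ once more, now depending also on $\epsilon(k)$, so that the implied constant times $c_1(k)^2$ is at most $\epsilon(k)$, which gives the claim.

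The argument is essentially bookkeeping; there is no substantive analytic difficulty, since all of the content sits in the previous lemma. The only points needing a little care are that the constant supplied by that lemma must be made uniform over the finitely many forms $L_j$ (handled by taking the minimum $\epsilon_*$), and that the range $p<x^{c_1(k)}$ must be kept inside $R^{\epsilon_*}$ so that the lemma is actually applicable — this constrains the size of $c_1(k)$ but poses no real obstacle. One should also record that all implied constants here depend only on $k$ (and the fixed quantities $\theta,\epsilon$, and the set $\HH$), not on $h$ or $x$.
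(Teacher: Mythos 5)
Your proposal is correct and follows essentially the same route as the paper: a union bound over the index $j$ and the primes $D_0 < p < x^{c_1(k)}$ reduces the bound to the preceding lemma, then the Mertens-type estimate $\sum_{p\le y}(\log p)^2/p \ll (\log y)^2$ with $\log R \asymp \log x$ yields $\ll_k c_1(k)^2\,h(\log R)^k/W$, and one shrinks $c_1(k)$. Your added remarks (making the lemma's $\epsilon$ uniform over $j$ and verifying $x^{c_1(k)}\le R^{\epsilon_*}$) are correct care points but do not change the substance.
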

\begin{proof}
We follow the proof of \cite[Lemma~5.2]{patterns}. We have
\[
S_1^-(x-h,x)\leq \sum_{j=1}^k\sum_{D_0<p<N^{c_1(k)}}S_{1,p}^{(j)}.
\]
When $c_1(k)\leq \epsilon \frac{\log R}{\log x}$, we can apply the previous lemma and obtain 
\[
S_1^-(x-h,x)\ll_k \frac{h(\log R)^k}{W} \sum_{D_0<p<x^{c_1(k)}} \frac{(\log p)^2}{p(\log R)^2} \ll \frac{h(\log R)^k}{W}\frac{(c_1(k)\log x)^2}{(\log R)^2}.
\]
Picking $c_1(k)$ sufficiently small thus gives
\[
S_1^{-}(x-h,x)\leq \epsilon(k)\left(\frac{h(\log R)^k}{W}\right),
\]
as desired.
\end{proof}
A similar bound for the contribution to $S_2$ of $n$ such that some $a_in+h_i$ have small prime factors follows easily.
\begin{cor}
For any $\epsilon(k)>0$ there exists $c_1(k)>0$ such that for sufficiently large $N$,
\[
S_2^-(x-h,x):=\sum_{\substack{x-h<n\leq x\\ n\equiv v_0\bmod W\\ P^{-}(\prod_{i=1}^k (a_in+h_i))<n^{c_1(k)}}} \sum_{i=1}^k 1_{\PP}(a_in+h_i)w_n\leq \epsilon(k)\frac{h(\log R)^k}{W}.
\]
\end{cor}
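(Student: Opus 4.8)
The plan is to read this off the preceding lemma with essentially no extra work, the only inputs being the nonnegativity of the weights $w_n$ and the trivial pointwise bound $\sum_{i=1}^k 1_{\PP}(a_i n+h_i)\le k$. First I would note that, since each term $w_n\ge 0$,
\[
S_2^-(x-h,x)\;=\;\sum_{\substack{x-h<n\leq x\\ n\equiv v_0\bmod W\\ P^{-}(\prod_{i=1}^k (a_in+h_i))<n^{c_1(k)}}} \Big(\sum_{i=1}^k 1_{\PP}(a_in+h_i)\Big)w_n\;\le\; k\sum_{\substack{x-h<n\leq x\\ n\equiv v_0\bmod W\\ P^{-}(\prod_{i=1}^k (a_in+h_i))<n^{c_1(k)}}} w_n\;=\;k\,S_1^-(x-h,x).
\]
Then I would invoke the previous lemma with the parameter $\epsilon(k)$ replaced by $\epsilon(k)/k$: this yields a constant $c_1(k)>0$ such that, for all sufficiently large $x$, $S_1^-(x-h,x)\le \frac{\epsilon(k)}{k}\cdot\frac{h(\log R)^k}{W}$. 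Substituting into the displayed inequality gives $S_2^-(x-h,x)\le \epsilon(k)\,\frac{h(\log R)^k}{W}$, which is exactly the assertion.

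A small bookkeeping remark worth including: the constant $c_1(k)$ produced here may be taken to be the minimum of the one used for $S_1^-$ and the one used for $S_2^-$, because both bounds persist when $c_1(k)$ is decreased — shrinking $c_1(k)$ only restricts the range of the (nonnegative) summands. This is what lets us use a single value of $c_1(k)$ throughout the proof of Theorem~\ref{thm:density}.

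I do not expect any genuine obstacle here: the estimate is weaker than the previous lemma up to the harmless multiplicative constant $k$, so the corollary is immediate. (If desired, one could sharpen the pointwise bound by observing that the condition $P^{-}(\prod_i(a_in+h_i))<n^{c_1(k)}$ forces some $a_jn+h_j$ to have a prime factor below $n^{c_1(k)}$, hence — as $a_jn+h_j\gg n$ exceeds that prime once $x$ is large and $c_1(k)<1$ — to be composite, so that $\sum_i 1_{\PP}(a_in+h_i)\le k-1$ on the range of summation; but this refinement is unnecessary for the stated bound.)
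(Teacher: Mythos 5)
Your proof is correct and follows essentially the same approach as the paper: bound the inner sum pointwise by $k$ using nonnegativity of $w_n$, reduce to $k\,S_1^-(x-h,x)$, and apply the preceding lemma with $\epsilon(k)$ replaced by $\epsilon(k)/k$. The extra remarks about choosing a common $c_1(k)$ and about the unnecessary sharpening to $k-1$ are fine but not needed.
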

\begin{proof}
Since $w_n$ is nonnegative, the triangle inequality gives us
\[
S_2^-(x-h,x)\leq \sum_{\substack{x-h<n\leq x\\ n\equiv v_0\bmod W\\ P^{-}(\prod_{i=1}^k (a_in+h_i))<n^{c_1(k)}}} k w_n = kS_1^-(N),
\]
which is bounded appropriately by the previous lemma.
\end{proof}

\begin{proof}[Proof of Theorem~\ref{thm:density}]
Let $\rho_m$ be such that $\lfloor \rho_m\rfloor = m$. Note that by definition of $S(\mathcal{H})$, we have
\[
0 < \sum_{i=1}^k 1_{\PP}(a_in+h_i)-\rho_m \leq k,
\]
for $n\in S(\mathcal{H})$. 
Also note that for $n\in S(\mathcal{H})$, since the smallest prime factor of each $a_in+h_i$ is at least $n^{c_1(k)}$, each $a_in+h_i$ has a number of divisors bounded in terms of $c_1(k)$ and the $h_i$, so
\[
w_n=\Big(\sum_{\substack{\vec d:\: d_i|a_in+h_i\:\forall i }} \lambda_{\vec{d}}\Big)^2\ll_{c_1(k),\mathcal{H}} \lambda_{max}^2 \ll_k y_{max}^2 (\log R)^{2k}.
\]

Since $y_{max}\ll F_{max}$, and our choice of $F$ only depends on $k$, assuming that our choice of $\epsilon(k)$ and therefore of $c_1(k)$ only depends on $k$, we in fact obtain $w_n \ll_{k,\mathcal{H}} (\log R)^{2k}$, or equivalently,
\[
1 \gg_{k,\HH} \frac{w_n}{(\log R)^{2k}}.
\]
We then have
\begin{align*}
& |S(\mathcal{H})\cap [x-h,x]|=\sum_{\substack{x-h<n\leq x\\ n\in S(\mathcal{H})}} 1 \\
& \gg_{k,\HH} \frac{1}{(\log R)^{2k}}\sum_{\substack{x-h<n\leq x\\ n\in S(\mathcal{H})}} \left(\sum_{i=1}^k 1_{\PP}(a_in+h_i)-\rho_m \right) w_n. 
\end{align*}
Now, define
\[
S_1^+(x-h,x)=S_1(x-h,x)-S_1^-(x-h,x),
\]
\[
S_2^+(x-h,x)=S_2(x-h,x)-S_2^-(x-h,x).
\]
For $n$ satisfying $P^{-}(\prod_{i=1}^k (a_in+h_i))\geq n^{c_1(k)}$, we have $\sum_{i=1}^k 1_{\PP}(a_in+h_i) - \rho_m > 0$ if and only if $n\in S(\HH)$. So, we have that
\begin{align*}
S_2^+(x-h,x)-\rho_m S_1^+(x-h,x) &=\sum_{\substack{x-h<n\leq x\\ n\equiv v_0\bmod W\\ P^{-}(\prod_{i=1}^k (a_in+h_i))\geq n^{c_1(k)}}} \left(\sum_{i=1}^k 1_{\PP}(a_in+h_i) - \rho_m \right) w_n \\
& \leq \sum_{\substack{x-h<n\leq x\\ n\in S(\mathcal{H})}} \left(\sum_{i=1}^k 1_{\PP}(a_in+h_i)-\rho_m \right) w_n.
\end{align*}
Furthermore, we have
\begin{align*}
S_2^+(x-h,x)-\rho_m S_1^+(x-h,x) &= (S_2-\rho_m S_1) + (S_2^-(x-h,x)-\rho_m S_1^-(x-h,x)) \\
& \geq \frac{h}{W}\left(\frac{\phi(W)\log R}{W}\right)^k I_k(F) \Big((1-\beta)\Big(\frac{\theta}{2}-\epsilon_0\Big)(M_k-\epsilon_0)-\rho_m\Big) \\
& + O_k\Big(\frac{h}{W}\left(\frac{\phi(W)\log R}{W}\right)^k\Big) + O\Big(k\epsilon(k)\frac{h(\log R)^k}{W}\Big).
\end{align*}
Since we are picking $D_0$ to only depend on $k$, we have $\frac{\phi(W)}{W}\gg_k 1$ and similarly $\frac{1}{W}\gg_k 1$. Thus, when $\epsilon_0, \epsilon(k)$ are chosen to be small enough based on $k$, we have
\[
S_2^+(x-h,x)-\rho_m S_1^+(x-h,x) \gg_k h(\log R)^k.
\]

Thus, combining with the previous bounds, we obtain
\[
|S(\mathcal{H})\cap [x-h,x]| \gg_{k,\HH} \frac{1}{(\log R)^{2k}} (S_2^+(x-h,x)-\rho_m S_1^+(x-h,x)) \gg_k h(\log R)^{-k},
\]
as claimed.
\end{proof}

\section{Proof of \texorpdfstring{Theorem~\ref{thm:kum525}}{Theorem 2.3}}
\label{sec:kummore}
In this section we outline a proof of Theorem~\ref{thm:kum525}, the extension of Kumchev's main result in~\cite{kumchev} to all exponents $\delta \geq 0.525$.
To do this, we will synthesize the argument of Kumchev in \cite{kumchev}, which shows the result with $0.53$ in place of $0.525$, and the argument of Baker, Harman, and Pintz in \cite{bhp}. To modify the results of \cite{bhp} for use with primes in arithmetic progressions, we replace the use of Watt's theorem by its extension to Dirichlet L-functions in \cite{wattl}. 
For a function $f$ and a character $\chi$ mod $q$, define
\[
E_f(y,h;\chi)=\sum_{y-h<n\leq y } f(n)\chi(n)-\delta(\chi)hz_0^{-1}\sum_{y-z_0<n\leq y}f(n).
\]
The relation of $E_f(y,h;\chi)$ to $E_f(y,h;q,a)$ is analogous to the relation between the functions $\psi(x;\chi)$ and $\psi(x;q,a)$ in the proof of the Prime Number Theorem for arithmetic progressions given in \cite[Chapter~20]{davenport}. As shown in \cite[Section~4.1]{kumchev}, a function $f$ satisfies property~(iii) from Theorem~\ref{thm:kum525} if it satisfies
\[
\sum_{q\sim Q'} \sideset{}{^*} \sum_\chi \max_{h\leq z}\max_{x/2\leq y<x}|E_f(y,h;\chi)|\ll_A \frac{Q' z}{(\log x)^A}
\]
for all $Q'\leq Q$ and $A>0$, as long as the following conditions hold: $|f(n)|\ll d(n)^B$ for some $B>0$, for some $D$ we have $f(n)=0$ if $P^-(n)<D$, and
\[
D\geq x^\eta, \qquad Q\leq \min(D^2 x^{-\eta},DHx^{-\eta}),
\]
for some $\eta>0$. Here the asterisk on the sum over $\chi$ indicates that the sum is restricted to primitive characters modulo $q$.

Define
\[
\psi(n,w)=\begin{cases}
1\qquad \text{if }P^-(n)>w,\\
0\qquad \text{otherwise.}
\end{cases}
\]
Throughout the arguments that follow we make repeated use of Buchstab's identity,
\[
\psi(n,w_1)=\psi(n,w_2)-\sum_{\substack{pm=n\\w_2\leq p<w_1}}\psi(m,p),
\]
where $2\leq w_2<w_1$.
Note that for $n\in(x^{\frac{1}{2}},x]$ we have
\[
\psi(n,x^{\frac{1}{2}})=1_{\PP}(n),
\]
and for $n\leq x^{\frac{1}{2}}$ we have $\psi(n,x^{\frac{1}{2}})=0$.

Our strategy is to apply Buchstab's identity repeatedly to obtain a decomposition
\begin{equation}
\label{eqn:decompform}
\psi(n,x^{\frac{1}{2}})=\sum_{j=1}^k c_j(n)-\sum_{j=k+1}^\ell c_j(n),
\end{equation}
for some nonnegative arithmetic functions $c_j(n)$ satisfying a particular set of properties.

\begin{defn}
\label{def:fine}
Define a decomposition of the form~\eqref{eqn:decompform} to be a \emph{fine} decomposition if for some $r<k$ and $0.524< \delta \leq 1$, the following properties hold:
\begin{enumerate}[(1)]
\item For all $1\leq j\leq \ell$, $c_j(n)\ll d(n)^B$ for some $B$;
\item $c_j(n)=0$ if $P^-(n)<x^{2\delta-1}$;
\item for any $A>0$ we have
\[
\sum_{q\sim Q}\sideset{}{^*}\sum_{\chi} \max_{h\leq z}\max_{y\sim x}|E_{c_j}(y,h;\chi)|\ll Qz(\log x)^{-A},
\]
for $Q\leq zx^{-\delta}$ and $j\in [1,r]\cup [k+1,\ell]$;
\item if $y\sim x$, $h_0=x\exp(-3(\log x)^{\frac{1}{3}})$, and $\delta\geq 0.525-\epsilon$, then
\begin{equation}
\label{eqn:fine4}
\sum_{y-h_0<n\leq y}\sum_{j=r+1}^k c_j(n)\leq (\beta+o(1))\frac{h_0}{\log x},
\end{equation}
where $\beta<1$ is an absolute constant, which we can take to be $0.94$.
\end{enumerate}
\end{defn}

Given a fine decomposition, we can then take, as in \cite{kumchev},
\[
Y(n)=\sum_{j=1}^r c_j(n) - \sum_{j=k+1}^\ell c_j(n).
\]
This satisfies the conditions of Theorem~\ref{thm:kum525} by the same argument as in \cite{kumchev}. Namely, property~(i) of Theorem~\ref{thm:kum525} is satisfied because 
\[
Y(n)\leq \psi(n,x^{1/2})\leq \begin{cases}
1\text{  if }n\text{ is prime,}\\
0\text{  otherwise,}
\end{cases}
\]
as needed. Property~(ii) of Theorem~\ref{thm:kum525} follows, when $\delta\geq 0.525-\epsilon$, from the equation
\[
\sum_{y-h_0<n\leq y}Y(n)=\sum_{y-h_0<n\leq y}\left(\psi(n,x^{\frac{1}{2}})-\sum_{j=r+1}^k c_j(n) \right),
\]
upon applying the Prime Number Theorem and property~(4) above. Finally, by (1)-(3) and the aforementioned argument from Section~4.1 of \cite{kumchev}, property~(iii) of Theorem~\ref{thm:kum525} holds as long as
\[
Q\leq \min(x^{4\delta-2-\eta},Hx^{-\delta-\eta}),
\]
for some sufficiently small $\eta$. It suffices to consider $H\leq x^{3/5+\eta}$.  In this case, since $\delta\geq 0.52+\eta$, the constraint is just $Q\leq Hx^{-0.525-\eta}$.  This gives property~(iii) of Theorem~\ref{thm:kum525} by taking $\delta<0.525-\epsilon$.
Thus, it remains to find a fine decomposition for $\psi(n,x^{\frac{1}{2}})$.

The following sections contain many technical results giving bounds on expressions involving Dirichlet polynomials or weighted sums of the function $\psi(n,w)$, in many cases very similar to results in \cite{bhp} or \cite{kumchev}. Where relevant, significant changes in the proofs from those in \cite{bhp} and \cite{kumchev} are indicated.

\subsection{Dirichlet polynomials}
The lemmas in this section are essentially the same as the lemmas of \cite[Section~2]{bhp} translated for general Dirichlet L-functions, with the addition of a few tools from \cite[Section~2]{kumchev} to deal with the additional factors of $Q$ that appear. They can be seen as strengthened versions of the lemmas of \cite[Section~2]{kumchev}.

We borrow our notation from both \cite{bhp} and \cite{kumchev}, as appropriate. Recall that $\delta\geq 0.525$. 
Let $\LL=\log x$, $\Psi(T)=\min(zx^{-\frac{1}{2}},x^{\frac{1}{2}}T^{-1})$, and $w=\exp(\LL/\log \LL)$. Write $Q=x^\theta$.
$\epsilon$ and $\eta$ are taken to be small constants, not necessarily the same in every appearance. Likewise, $B$ is taken to be a large constant, not necessarily the same in every appearance. When the expression $\LL^{-A}$ appears, $A$ can be taken to be arbitrarily large.
We note that in many of the results below we will impose the condition that $Q\leq zx^{-\delta-\epsilon/2}$. This condition gives the bound
\[
\Psi(T)QT\leq (x^{\frac{1}{2}}T^{-1}) zTx^{-\delta-\epsilon/2}=x^{1-\delta-\epsilon/2}(zx^{-\frac{1}{2}}),
\]
which will be useful for the integral bounds we wish to show.

Define an \emph{$L$-factor} to be a Dirichlet polynomial of the form
\[
\sum_{k\sim K} \chi(k) k^{-s}\qquad \text{or}\qquad \sum_{k\sim K} \chi(k)(\log k) k^{-s}.
\]

We assume without further comment that all Dirichlet polynomials defined in the following results are of the form
\[
M(s,\chi)=\sum_{m\sim M} a_m \chi(m) m^{-s},
\]
where the coefficients $a_m$ are bounded by $(\tau(m))^B$ for some $B$.

When considering integrals over an interval $[U_0,U]$, we define the $L_p$ norms
\[
\|N\|_p:=\begin{cases}
\left(\sum_{q\sim Q}\sideset{}{^*}\sum_{\chi} \int_{U_0}^U |N(\frac{1}{2}+it,\chi)|^p\right)^{1/p}\qquad \text{if }1\leq p<\infty,\\
\sup_{(t,q,\chi):q\sim Q, t\in [U_0,U]}|N(\frac{1}{2}+it,\chi)| \qquad \text{if } p=\infty.
\end{cases}
\]
Note that these standard norms are related by a simple bound to the norms that Kumchev defines in \cite{kumchev} in terms of well-spaced sets. As in \cite{kumchev}, we define a \emph{well-spaced set} $\mathcal{T}=\mathcal{T}(Q',T)$ to be a set of tuples $(t,q,\chi)$ with $|t|\leq T$, $q\sim Q'$ such that if $(t,q,\chi),(t',q,\chi)\in \mathcal{T}$ with $t\neq t'$, then $|t-t'|\geq 1$. By considering for each ordered pair $(q,\chi)$ the optimal choice of $t$ in each unit interval, we obtain the bound
\[
\sum_{q\sim Q'}\sideset{}{^*}\sum_{\chi} \int_{-T}^T \Big|N\Big(\frac{1}{2}+it,\chi\Big)\Big|^p \leq 2 \max_{\mathcal{T}}\sum_{(t,q,\chi)\in\mathcal{T}} \Big|N\Big(\frac{1}{2}+it,\chi\Big)\Big|^p,
\]
so that asymptotic bounds on Kumchev's norms apply to these standard $L_p$ norms as well.
We start by recalling the following result giving a bound on the $L_2$ norm of a generic Dirichlet polynomial.
\begin{lemma}[Kumchev, {\cite[Lemma~1]{kumchev}}]
\label{lem:kum1}
Given a Dirichlet series $N(s,\chi)=\sum_{n\sim N} b_n\chi(n) n^{-s}$, we have
\[
\|N\|_2^2\ll (N+Q'^2 U)G\LL,
\]
where
$G=\sum_{n\sim N}|b_n|^2 n^{-1}$.
\end{lemma}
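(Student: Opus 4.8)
The plan is to recognize Lemma~\ref{lem:kum1} as an instance of the \emph{hybrid large sieve}: the sum over primitive characters $\chi\bmod q$ with $q\sim Q'$ is handled by the multiplicative large sieve inequality, and the integration in $t$ by a mean value theorem of Montgomery--Vaughan type. First I would write $N(\tfrac12+it,\chi)=\sum_{n\sim N}a_n\chi(n)n^{-it}$ with $a_n=b_n n^{-1/2}$, so that $\sum_{n\sim N}|a_n|^2=G$. Since $q/\phi(q)\ge1$, it suffices to bound the weighted quantity $\sum_{q\sim Q'}\frac{q}{\phi(q)}\sideset{}{^*}\sum_\chi\int_{U_0}^U|N(\tfrac12+it,\chi)|^2\,dt$; replacing $[U_0,U]$ by the symmetric interval $[-U,U]$ (we may assume $0\le U_0<U$ with $U\ge1$, as is the case throughout our applications) only enlarges this.

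Next I would invoke a classical hybrid large sieve estimate,
\[
\sum_{q\sim Q'}\frac{q}{\phi(q)}\sideset{}{^*}\sum_\chi\int_{-U}^{U}\Big|\sum_{n\sim N}a_n\chi(n)n^{-it}\Big|^2\,dt\ll (N+Q'^2U)\,\LL\sum_{n\sim N}|a_n|^2,
\]
which follows, for instance, by combining the multiplicative large sieve with the Montgomery--Vaughan mean value theorem. To re-derive it from scratch, the route would be: first, a Sobolev--Gallagher inequality replaces the $t$-integral by a sum over a $1$-separated set of points $t_r\in[-U,U]$, up to an error $\int_{-U}^U|f||f'|\,dt$ which Cauchy--Schwarz controls using $f'(t)=\sum_n a_n(-i\log n)\chi(n)n^{-it}$ and $\log n\ll\LL$ --- this is where the factor $\LL$ appears; second, to the resulting discrete double sum one applies the multiplicative large sieve inequality in the $\chi$-aspect while exploiting the $1$-separation of the $t_r$ in the $t$-aspect, so that the diagonal contributes $N$ and the off-diagonal (large-sieve) part contributes $Q'^2U$. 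Substituting $\sum_n|a_n|^2=G$ and discarding the weight $q/\phi(q)\ge1$ then gives the stated bound.

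The hard part is obtaining the \emph{product} shape $N+Q'^2U$ rather than a weaker bound that merely multiplies the separate costs of the character and $t$ aspects: one must keep these two variables essentially orthogonal and, crucially, use the \emph{primitivity} of $\chi$, so that the conductor contribution is $Q'^2$ and not $Q'^3$. This orthogonality is exactly what the hybrid large sieve supplies and constitutes the only real content; the reduction to a symmetric interval, the removal of $q/\phi(q)$, and tracking the $\LL$ factor are all routine.
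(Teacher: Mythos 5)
The paper does not prove Lemma~\ref{lem:kum1}: it is quoted from Kumchev's paper~\cite{kumchev} and the only ``proof content'' the paper supplies is the observation immediately above the lemma that the integral $L_2$ norm used here is bounded (up to a factor of $2$) by Kumchev's discrete norm over well-spaced sets, so that Kumchev's estimate transfers. Your proposal supplies the underlying argument that Kumchev would himself appeal to, and it is the right one: this is exactly the hybrid large sieve. Substituting $a_n=b_n n^{-1/2}$ to get $\sum_n|a_n|^2=G$, then applying the Gallagher hybrid inequality and dropping the weight $q/\phi(q)\ge 1$, is correct. One small remark: the Sobolev--Gallagher discretization you describe is essentially the same reduction the paper uses to pass between the integral norm and Kumchev's well-spaced norm, so the two routes coincide in substance. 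A second small remark: your derivation produces the $\LL$ factor via the $\|f'\|\ll\LL\|f\|$ step, but the clean Gallagher hybrid large sieve actually gives $\ll (N+Q'^2U)\sum_n|a_n|^2$ with no log at all; the $\LL$ in the statement is simply slack that Kumchev keeps, so your slightly lossier Sobolev route is perfectly adequate.
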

Since the Dirichlet series we work with always have coefficients bounded by a power of the divisor function, we always have $G\ll N^\varepsilon$ for any $\varepsilon>0$.
In certain special cases, we can obtain stronger bounds on similar norms. The following lemma is an analogue of Lemma~2 from \cite{bhp}, and is essentially a form of the $L$-function analogue of Watt's theorem, proven in \cite{wattl}.

\begin{lemma}
\label{lem:lbhp2}
If $K(s,\chi)$ is an L-factor, $M<x$, $K \leq 4Q'U$, and $Q' \leq \max(K,U)$, then
\[
\sum_{q\sim Q'}\sideset{}{^*}\sum_{\chi}\int_{1/2+iU/2}^{1/2+iU}|M(s,\chi)|^2|K(s,\chi)|^4|ds| \ll (UQ'^2)^{1+\epsilon}M^\epsilon (1+M^2(UQ')^{-1/2}).
\]
\end{lemma}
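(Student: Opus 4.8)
The plan is to reduce Lemma~\ref{lem:lbhp2} to the $L$-function analogue of Watt's mean-value theorem from \cite{wattl}, following the structure of the proof of \cite[Lemma~2]{bhp}. First I would open up the fourth power of the $L$-factor. If $K(s,\chi)=\sum_{k\sim K}\chi(k)k^{-s}$ (the logarithmic variant is handled identically, absorbing the $\log k$ into the $x^\epsilon$ factors since $\log k\ll\LL$), then $K(s,\chi)^2=\sum_{n\asymp K^2}b_n\chi(n)n^{-s}$ with $b_n=\sum_{n=k_1k_2,\,k_i\sim K}1\ll\tau(n)\ll n^\epsilon$, and similarly $|K(s,\chi)|^4=|K(s,\chi)^2|^2$ is the squared modulus of a Dirichlet polynomial of length $\asymp K^2$ with divisor-bounded coefficients. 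So the integral becomes
\[
\sum_{q\sim Q'}\sideset{}{^*}\sum_\chi\int_{U/2}^U\Bigl|M\bigl(\tfrac12+it,\chi\bigr)\Bigr|^2\Bigl|L\bigl(\tfrac12+it,\chi\bigr)\Bigr|^2|dt|,
\]
where $L(s,\chi)=K(s,\chi)^2$ has length $N:=K^2$ and $M$ has length $M<x$.

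Next I would invoke the generalized Watt theorem. The key input is that \cite{wattl} gives, for a Dirichlet polynomial $A(s,\chi)=\sum_{m\sim M}a_m\chi(m)m^{-s}$ with $|a_m|\ll m^\epsilon$ and a ``smooth'' polynomial such as $K(s,\chi)^2$ of length $N=K^2$, a bound of the shape
\[
\sum_{q\sim Q'}\sideset{}{^*}\sum_\chi\int_{U/2}^U|A(\tfrac12+it,\chi)|^2|K(\tfrac12+it,\chi)|^4\,|dt|\ll (MNUQ'^2)^\epsilon\bigl(MUQ'^2+M N (UQ'^2)^{1/2}\bigr)
\]
or an equivalent form, valid in the range where $N=K^2\ll UQ'^2$, i.e. $K\ll (UQ'^2)^{1/2}$, which is exactly the hypothesis $K\le 4Q'U$ in a slightly stronger-looking but equivalent guise once one also uses $Q'\le\max(K,U)$. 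Dividing through by... wait, rather, writing $M N(UQ'^2)^{1/2}=M\cdot K^2\cdot (UQ')\cdot Q'^{-1}$ and using $N=K^2$, the second term is $MUQ'^2\cdot K^2(UQ')^{-1/2}Q'^{-3/2}$; one checks this matches $(UQ'^2)M^\epsilon\cdot M^2(UQ')^{-1/2}$ after substituting the constraint $K^2\le 4Q'U$ to bound $K^2\le M\cdot\text{(something)}$... actually the cleanest route is: apply Watt's bound, then crudely bound $N=K^2\le 4Q'U$ in the first term of the output to absorb it into $MUQ'^2$ times $M^\epsilon$, giving the stated $(UQ'^2)^{1+\epsilon}M^\epsilon$, and show the remaining term is $\ll(UQ'^2)^{1+\epsilon}M^\epsilon\cdot M^2(UQ')^{-1/2}$. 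The side conditions $K\le 4Q'U$ and $Q'\le\max(K,U)$ are precisely what is needed to be inside the admissible range of the Watt-type theorem (so that the error terms in \cite{wattl} take the claimed shape) and to discard lower-order terms.

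The main obstacle is not the combinatorial bookkeeping but the \emph{verification that the extension of Watt's theorem in \cite{wattl} applies in exactly this range and produces error terms of the claimed shape}. Watt's original theorem \cite[Lemma~2]{bhp} is stated for the Riemann zeta function (equivalently, the principal character and $q=1$); here we need the uniform-in-$q$, summed-over-primitive-$\chi$ version, and one must check that the dependence on $Q'$ is the expected $Q'^2$ (coming from $\sum_{q\sim Q'}\sideset{}{^*}\sum_\chi 1\asymp Q'^2$) and that the conductor-aspect terms do not degrade the bound. This is the one place where genuine care is required, and it is the crux of why the paper needs \cite{wattl} rather than the classical Watt theorem; everything else (expanding $K^4$, bounding divisor sums by $x^\epsilon$, matching the two shapes of error term via the stated inequalities on $K,Q',U$) is routine and parallels \cite{bhp} line by line. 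I would therefore structure the write-up as: (1) reduce to a $2$-power-times-smooth-$4$-power integral by squaring out $K$; (2) quote the precise form of the generalized Watt estimate from \cite{wattl} with its range of validity; (3) check the hypotheses $K\le 4Q'U$, $Q'\le\max(K,U)$, $M<x$ put us in that range; (4) simplify the resulting bound using these same inequalities to reach the stated right-hand side.
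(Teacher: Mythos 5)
The proposal misses the key structural step of the paper's proof and has a false claim about the range of applicability. You assert that the hypothesis $K\le 4Q'U$ is equivalent, given $Q'\le\max(K,U)$, to $K\ll(UQ'^2)^{1/2}$ — but this is not so. Take, say, $U=x^{0.1}$, $Q'=x^{0.01}$, $K=x^{0.1}$: then $K\le 4Q'U$ holds comfortably, yet $(UQ'^2)^{1/2}=x^{0.06}$, so $K\not\ll(UQ'^2)^{1/2}$. The lemma genuinely allows $K$ to be much larger than the range in which any version of Watt's theorem applies directly. The paper handles this by splitting into two cases: for $K\le(Q'U)^{1/2}$, the generalized Watt theorem of \cite{wattl} applies as stated; for $(Q'U)^{1/2}\le K\le 4Q'U$, one must first invoke an approximate functional equation for Dirichlet $L$-functions (using $Q'\le\max(K,U)$ to control the error term $R(X,Y)$) to write the long L-factor as a short L-factor $K'$ of length $\le(Q'U)^{1/2}$ plus a small error $E\ll\log(1+Q'U)$, so that $|K|^4\ll|K'|^4+|E|^4$; the $|K'|^4$ piece goes back to case one and the $|E|^4$ piece is handled by the generic $L_2$ mean-value bound (Lemma~\ref{lem:kum1}). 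Your proposal has no analogue of this functional-equation reduction, so it does not cover the larger range of $K$ permitted by the lemma.

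Secondly, the reduction $|K|^4=|K^2|^2$, treating $K^2$ as a Dirichlet polynomial of length $K^2$ with divisor-bounded coefficients, throws away exactly the structure that makes Watt's theorem nontrivial. Watt-type bounds are $|M|^2|K|^4$ mean-value estimates whose strength comes from the smooth (constant or logarithmic) coefficients of the L-factor; applying a generic $L_2$ bound to the polynomial $K^2$ with merely divisor-bounded coefficients would give something on the order of $(MK^2+Q'^2U)$ times an $\epsilon$-power, which is far weaker than the claimed right-hand side once $K$ is sizeable. So even in the short-$K$ range, your proposed step (2) should quote the $|M|^2|K|^4$ form of the result in \cite{wattl} directly rather than passing through $|K^2|^2$; otherwise the hypotheses you need to verify are the wrong ones. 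Your instinct that the crux is checking that the uniform-in-$q$ version from \cite{wattl} really delivers the $Q'^2$ dependence with no conductor-aspect degradation is correct, but that verification must be done for the genuine $|M|^2|K|^4$ integral, and it only covers $K\le(Q'U)^{1/2}$; you still need the functional-equation argument for the rest of the range.
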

\begin{proof}
For $K\leq (Q'U)^\frac{1}{2}$, this is essentially proven in \cite{wattl} in the course of proving the main theorem there. 
For $(Q'U)^\frac{1}{2}\leq K\leq 4Q'U$, we proceed as in~\cite{bhp}, this time based on an approximate functional equation for Dirichlet L-functions as stated in \cite{wang}.
Namely, if $s=\frac{1}{2}+it$ and $\chi$ is a primitive character to the modulus $q$, we have for any $X,Y>0$ satisfying $X \gg q$ and $2\pi XY=qt$,
\begin{equation}
\label{eqn:approxfe}
L(s,\chi)=\sum_{n \le X}\chi(n)n^{-s}+A(s,\chi)\sum_{n \le Y}G(n,\chi)n^{s-1}+R(X,Y),
\end{equation}
where
\[
A(s,\chi)=iq^{-s}(2\pi)^{s-1}\Gamma(1-s)e^{\frac{-\pi is}{2}}\chi(-1) \ll q^{-1/2},
\]
\[ 
G(n,\chi)=\sum_{r=1}^{q}\chi(r)\exp(rn/q)=\overline{\chi(n)}G(1,\chi),
\] 
and 
\[
R(X,Y) \ll q^{1/2}X^{-1/2}\log(Y+q+2)+Y^{-1/2}.
\]
If $Q'<U$, then $K \ge \sqrt{Q'U} \ge Q'$, so since $Q' \le \max(U,K)$, it follows for $X=K,2K$ that $R(X,Y) \ll \log(1+Q'U)$. Arguing as in \cite{bhp} then gives the bound
\[
|K|\ll |K'|+|E|,
\]
for some $L$-factor $K'$ with $K'\leq (Q'U)^{\frac{1}{2}}$ and some error $E\ll \log (1+Q'U)$. So, $|K|^4\ll |K'|^4+|E|^4$. Applying Lemma~\ref{lem:kum1} yields
\begin{align*}
& \sum_{q\leq Q'}\sideset{}{^*}\sum_{\chi}\int_{1/2+iU/2}^{1/2+iU}|M(s,\chi)|^2|E|^4|ds| \ll \log(1+Q'U)^4 \sum_{q\leq Q'}\sideset{}{^*}\sum_{\chi}\int_{1/2+iU/2}^{1/2+iU}|M(s,\chi)|^2|ds| \\
& \ll (Q'U)^{\epsilon} (M+Q'^2 U) M^\epsilon,
\end{align*}
which is absorbed into the claimed bound. By applying the argument in the previous case to $K'$, the lemma follows in this case as well.

\end{proof}

The lemmas that follow will make reference to a particular kind of Dirichlet polynomial, those defined by
\[
N(s,\chi)=\sum_{p_i\sim P_i} \chi(p_1\cdots p_u) (p_1\cdots p_u)^{-s},
\]
where $u\leq B$ for some constant $B$, $P_i\geq w$ for all $i$, and $P_1\cdots P_u \leq x$. We call such polynomials ``of bounded product type.''

The following lemma is an analogue of \cite[Lemma~1]{bhp}, and its proof is essentially the same, using a variant of Heath-Brown's identity \cite{6bhp} for $L$-functions instead of for the zeta function.
\begin{lemma}
\label{lem:lbhp1}
Let $N(s,\chi)$ be a Dirichlet polynomial of bounded product type. Then for $\text{Re}(s)=\frac{1}{2}$,
\[
|N(s,\chi)| \le g_1(s,\chi)+ \cdots +g_r(s,\chi),\: \text{with }r \le \LL^B,
\]
where each $g_i$ is of the form
\[
\LL^B \prod_{i=1}^{h}|N_i(s,\chi)|,\qquad \text{with }h \le B,\: \prod_{i=1}^{h}N_i \le x,
\]
and among the Dirichlet polynomials $N_1, \cdots, N_h$ the only polynomials of length greater than $(QT)^{1/2}$ 
are L-factors.
\end{lemma}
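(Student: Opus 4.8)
## Proof Proposal for Lemma \ref{lem:lbhp1}

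The plan is to apply a Dirichlet $L$-function analogue of Heath-Brown's identity \cite{6bhp} to each prime-variable factor $\sum_{p_i \sim P_i} \chi(p_i) p_i^{-s}$ appearing in $N(s,\chi)$, exactly as in the proof of \cite[Lemma~1]{bhp}. Recall that Heath-Brown's identity expresses the von Mangoldt function (or, in the $L$-function setting, $\chi(n)\Lambda(n)$) as a linear combination of $O(\log^B x)$ sums of the form $\sum \mu(m_1)\cdots\mu(m_j)(\log n)\chi(n_1\cdots)$ with the $m_i$ restricted to dyadic ranges below $x^{1/B}$; equivalently, one obtains a decomposition of any Dirichlet polynomial supported on primes in $[w,x]$ into $O(\log^B x)$ products of Dirichlet polynomials, of which all factors of length exceeding $x^{1/B}$ are ``smooth'' $L$-factors (pieces $\sum_{k\sim K}\chi(k)k^{-s}$ or $\sum_{k\sim K}\chi(k)(\log k)k^{-s}$). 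Since $N$ is of bounded product type it is a product of $u \le B$ such prime polynomials, so expanding each one via the identity and multiplying out yields $|N(s,\chi)| \le g_1(s,\chi) + \cdots + g_r(s,\chi)$ with $r \le \LL^B$, where each $g_i$ has the stated shape $\LL^B \prod_{i=1}^h |N_i(s,\chi)|$ with $h \le B$ and $\prod N_i \le x$.

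The key steps, in order, are: first, state the $L$-function version of Heath-Brown's identity for a single prime polynomial $\sum_{p \sim P}\chi(p)p^{-s}$, obtaining $O(\LL^B)$ terms each a product of $\le B$ polynomials whose long factors ($> (QT)^{1/2}$) are $L$-factors. The threshold $(QT)^{1/2}$ here is the analogue of $T^{1/2}$ in \cite{bhp}: because we are summing over characters $\chi$ mod $q$ with $q \sim Q$ as well as over $t \in [-T,T]$, the relevant ``conductor'' is $qt \asymp QT$, and the Heath-Brown combinatorial decomposition naturally splits at $x^{1/B}$, so taking $B$ large enough relative to any fixed exponent forces all factors exceeding $(QT)^{1/2}$ to be the smooth $L$-factor pieces. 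Second, multiply the decompositions of the $u \le B$ prime factors of $N$ together; this multiplies the number of terms to $(\LL^B)^u \le \LL^{B'}$ and multiplies the number of factors per term to $\le B \cdot u \le B'$, both still bounded as required, and the total length constraint $\prod N_i \le x$ is preserved since $P_1 \cdots P_u \le x$. Third, absorb the powers of $\LL$ from each application into the single prefactor $\LL^B$ (renaming $B$), and observe that when two or more short factors get merged, their product is still a short Dirichlet polynomial with divisor-bounded coefficients, so we may collect short factors and only the genuinely long ones — which are $L$-factors — survive past length $(QT)^{1/2}$.

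The main obstacle is purely bookkeeping rather than conceptual: one must verify that the $L$-function analogue of Heath-Brown's identity holds with the coefficients remaining bounded by a power of the divisor function (so that Lemma \ref{lem:kum1} and Lemma \ref{lem:lbhp2} can later be applied to the $N_i$), and that after multiplying out the $u$ factors the long components are still exactly of $L$-factor type — i.e., that no two long smooth factors from different prime variables need to be multiplied together in a way that destroys the $L$-factor structure. This is handled as in \cite{bhp}: each prime variable contributes at most one long factor to any given term of its own decomposition, but a product term of $N$ could a priori contain several long $L$-factors coming from different variables; one simply allows $h \le B$ to absorb this, since the product of two $L$-factors, while not itself an $L$-factor, does not exceed length $x$ and the statement only requires that the factors of length $> (QT)^{1/2}$ are individually $L$-factors, which they are. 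Thus the lemma follows by the same argument as \cite[Lemma~1]{bhp}, with $\zeta$ replaced throughout by $L(s,\chi)$ and all estimates carrying the extra $q \sim Q$ average.
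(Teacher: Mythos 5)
Your proposal is correct and follows the same approach the paper indicates: the paper's own ``proof'' is just a one-line remark that the argument of Baker--Harman--Pintz's Lemma~1 carries over once Heath-Brown's identity is replaced by its Dirichlet $L$-function variant, and your write-up is a faithful unpacking of that argument, including the key bookkeeping points (multiplying the $u\le B$ per-variable decompositions together, absorbing the $\LL$-powers, and observing that the lemma tolerates several long $L$-factors in a single product). The one small wrinkle is your aside about ``collecting short factors'': there is no need to merge short factors, and doing so would be risky since the product of two polynomials each of length at most $(QT)^{1/2}$ could exceed $(QT)^{1/2}$ without being an $L$-factor; but as you signal, that step is optional and the argument stands without it.
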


Let $\alpha'=\max(\alpha,\theta+(1-\delta))$ from this point onward.
The next two lemmas are direct analogues of Lemmas~3 and~4 of \cite{bhp}. We will go through the proofs in some detail to highlight the appearance of the $\theta$ terms introduced by working with Dirichlet characters and the small extent to which they affect the bounds.

\begin{lemma}
\label{lem:lbhp3}
Let $MN_1N_2K=x$, $T\leq x$. Suppose that $M$, $N_1$, and $N_2$ are of bounded product type, and $K(s)$ is an L-factor. Further suppose that $Q\leq \min(zx^{-\delta-\epsilon/2},T)$.  Let $M=x^{\alpha}$ and $N_j=x^{\beta_j}$ for $j=1,2$. Suppose that

\begin{equation}
\label{eqn:bhp3cond1}
\alpha \le \delta+\theta,
\end{equation}

\begin{equation}
\label{eqn:bhp3cond2}
\alpha'+\beta_1+\frac{1}{2}\beta_2 \le \frac{1}{2}(1+\delta)+\frac{3}{2}\theta,
\end{equation}

\begin{equation}
\label{eqn:bhp3cond3}
\alpha'+\beta_2 \le \frac{1}{4}(1+3\delta)+\theta,
\end{equation}

\begin{equation}
\label{eqn:bhp3cond4}
\alpha'+\beta_1+\frac{3}{2}\beta_2 \le \frac{1}{4}(3+\delta)+\frac{3}{2}\theta.
\end{equation}

Then for $1 \le U \le T$ and $1\leq Q'\leq Q$,
\[
\Psi(T)\sum_{q\sim Q'}\sideset{}{^*}\sum_\chi \int_{U/2}^{U}|(MN_1N_2K)(\frac{1}{2}+it,\chi)|dt \ll Q z \LL^{-A}.
\]
\end{lemma}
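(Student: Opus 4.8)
The plan is to follow the proof of Lemma~3 in \cite{bhp}, mechanically tracking the places where the extra sum over $q\sim Q'$ and the sum over primitive characters $\chi$ introduce powers of $Q$. First, I would apply the mean-value apparatus: Hölder's inequality splits the integral into a product of $L_p$ norms of the four factors $M$, $N_1$, $N_2$, $K$, with exponents chosen so that $N_1$ and $N_2$ are estimated in $L_4$ and $L_6$ (or similar), $M$ in $L_2$, and $K$ is grouped with one of the others so that Lemma~\ref{lem:lbhp2} (the $L$-function Watt-type bound) applies to an $|M_i|^2|K|^4$ piece. The hypothesis $Q\leq\min(zx^{-\delta-\epsilon/2},T)$ is exactly what is needed to invoke the displayed inequality $\Psi(T)QT\leq x^{1-\delta-\epsilon/2}(zx^{-1/2})$ from the preamble to this subsection, which converts the prefactor $\Psi(T)$ into something manageable once the $L_p$-norm bounds are multiplied out.

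The key steps, in order: (1) Reduce to the case where each of $M,N_1,N_2$ has already been decomposed via Lemma~\ref{lem:lbhp1} (Heath-Brown's identity for $L$-functions) into products in which the only long factors are $L$-factors; this costs only $\LL^B$ and lets me assume the lengths of all non-$L$-factor pieces are $\le (QT)^{1/2}$. (2) Apply Hölder in $t$ (and trivially in $q,\chi$) to bound the integral by a product $\|M\|_2\,\|N_1 K^{?}\|_{p_1}\|N_2\|_{p_2}\cdots$ with the exponents adding to $1$. (3) Estimate the ``generic'' factors by Lemma~\ref{lem:kum1}, which gives $\|N\|_2^2\ll (N+Q'^2U)N^\epsilon$, and estimate the distinguished $|M_i|^2|K|^4$ factor by Lemma~\ref{lem:lbhp2}, which gives $(UQ'^2)^{1+\epsilon}M^\epsilon(1+M^2(UQ')^{-1/2})$. (4) Multiply everything together, insert $\Psi(T)$, use $\Psi(T)QT\leq x^{1-\delta-\epsilon/2}(zx^{-1/2})$, and check that each of the terms produced is $\ll Qz\LL^{-A}$. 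This last verification is precisely where the four hypotheses \eqref{eqn:bhp3cond1}--\eqref{eqn:bhp3cond4} are consumed: each inequality on $\alpha',\beta_1,\beta_2,\theta$ corresponds to requiring that one of the finitely many monomials $x^{(\cdots)}$ coming out of step~(4) has a negative exponent (up to $\epsilon$), the $\theta$-shifts on the right-hand sides being exactly the contribution of the $Q'^2U$ terms in Lemma~\ref{lem:kum1} and the $(UQ'^2)^{1+\epsilon}$ in Lemma~\ref{lem:lbhp2}. The role of $\alpha'=\max(\alpha,\theta+(1-\delta))$ is that in the case $\alpha$ is small, the factor $M$ contributes via its trivial length-$1$ completion (or via the off-diagonal side of the functional equation) a term of size $x^{\theta+1-\delta}$ rather than $x^\alpha$, so all the conditions are phrased in terms of $\alpha'$.

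The main obstacle I expect is step~(4): bookkeeping the exponents so that the hypotheses \eqref{eqn:bhp3cond1}--\eqref{eqn:bhp3cond4} come out in exactly the stated form, in particular getting the $\theta$-coefficients ($\tfrac32\theta$, $\theta$, $\tfrac32\theta$) right. In \cite{bhp} the corresponding lemma is stated with $\theta=0$ (no character sum), so every place where Lemma~\ref{lem:kum1} or Lemma~\ref{lem:lbhp2} is applied here produces an extra factor that is a power of $Q'^2\le Q^2=x^{2\theta}$ compared to the zeta-function case, and one must verify that, after dividing through by the target size $Qz$, these extra powers are absorbed precisely by relaxing the right-hand sides of the BHP conditions by the indicated multiples of $\theta$. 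A secondary subtlety, already flagged in the statement of Lemma~\ref{lem:lbhp2}, is ensuring the side conditions $K\le 4Q'U$ and $Q'\le\max(K,U)$ hold for whichever $L$-factor gets paired with $M$; since $K$ here is an $L$-factor of length up to $x/(MN_1N_2)$ and $U\le T$, one checks these using $Q\le T$ and the length constraint $MN_1N_2K=x$, splitting into the cases $K\le(Q'U)^{1/2}$ and $(Q'U)^{1/2}<K\le 4Q'U$ exactly as in the proof of Lemma~\ref{lem:lbhp2}. Once the exponent arithmetic is organized, the estimate falls out of the three cited lemmas with no new analytic input.
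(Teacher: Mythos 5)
Your core strategy---split the integral by Cauchy--Schwarz/H\"older into an $L_2$ norm of $M$ and two fourth-power norms, hit the $|\cdot|^2|K|^4$ piece with Lemma~\ref{lem:lbhp2}, estimate the rest by Lemma~\ref{lem:kum1}, multiply out and check exponents against \eqref{eqn:bhp3cond1}--\eqref{eqn:bhp3cond4}---is indeed what the paper does in the central case. But two of your steps misread the situation, and one genuine case in the proof is missing.

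First, your step~(1) invoking Lemma~\ref{lem:lbhp1} is not part of this argument and is not needed: the hypotheses of Lemma~\ref{lem:lbhp3} already give $M$, $N_1$, $N_2$ of bounded product type and $K$ an $L$-factor, so there is nothing to decompose further. Lemma~\ref{lem:lbhp1} enters elsewhere (when one starts from a single arithmetic function and must produce such a factorization), not here.

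Second, and more seriously, your claim that the side conditions $K\le 4Q'U$ and $Q'\le\max(K,U)$ of Lemma~\ref{lem:lbhp2} can be verified ``using $Q\le T$ and the length constraint $MN_1N_2K=x$'' is not correct: neither condition follows from those facts (e.g.\ $K$ can easily exceed $4Q'U$ when $U$ is small, and one can have $K,U<Q'$). The paper in fact splits into three cases beyond the trivial $Q'=1$ case, and two of these use an entirely different mechanism from the Watt-type bound. When $K>4Q'U$, one does not apply Lemma~\ref{lem:lbhp2} at all; instead one bounds $K$ pointwise via the estimate (from \cite[Lemma~6]{kumchev}) $K(\tfrac12+it,\chi)\ll \delta(\chi)K^{1/2}U^{-1}+K^{-1/2}(qU)^{1/2}\log(qU)$, which for $q>1$ gives $\|K\|_\infty\ll\LL$, and then estimates $\|KN\|_1\le\|K\|_\infty\|1\|_2\|N\|_2$ by Lemma~\ref{lem:kum1}. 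When $K\le 4Q'U$ but $K,U\le Q'$, Lemma~\ref{lem:lbhp2} again does not apply; the paper substitutes the generic mean-value bound $\|K^2N_2\|_2^2\ll(K^2N_2+Q'^2U)x^\epsilon$ and checks that the resulting exponent is still acceptable. Only when $Q'\le\max(K,U)$ does one actually invoke Lemma~\ref{lem:lbhp2}. Without these cases your argument would be incomplete (and in the $K>4Q'U$ regime, outright inapplicable). A smaller point: the paper's Cauchy--Schwarz split is $\|M\|_2\|N_1N_2^{1/2}\|_4\|KN_2^{1/2}\|_4$, i.e.\ $(2,4,4)$ with $N_2$ shared between the two fourth-power norms, not an $(L_2,L_4,L_6)$ split; this is what produces exactly the combination $\max(2\beta_1+\beta_2,\cdots)$ and $\max(0,2\beta_2-\cdots)$ in the exponent $\gamma$, matching conditions \eqref{eqn:bhp3cond2}--\eqref{eqn:bhp3cond4}.
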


Note that this lemma immediately gives a bound for the same sum over all $q\leq Q$ via a dyadic decomposition of $[1,Q]$.

\begin{proof}
The case $Q'=1$ can be dealt with as in \cite[Lemma~3]{bhp}, giving an upper bound of $x^{1/2}\LL^{-A}$. So, we can assume that $Q'>1$.
First consider the case where $K>4Q'U$ and let $N=MN_1N_2$. By Lemma~6 of \cite{kumchev}, we have
\[
K\Big(\frac{1}{2}+it,\chi\Big) \ll \delta(\chi) K^\frac{1}{2}U^{-1}+K^{-\frac{1}{2}}(qU)^\frac{1}{2}\log(qU) \ll \delta(\chi) K^\frac{1}{2}U^{-1}+\LL.
\]
Since $q>1$, we then have $\|K\|_\infty \ll \LL$, so by H\"{o}lder's inequality and Lemma~\ref{lem:kum1}, the sum we wish to bound is at most
\begin{align*}
\|KN\|_1 & \leq \|K\|_\infty \|1\|_2 \|N\|_2 \ll (Q'^2 U)^{\frac{1}{2}}(N+Q'^2U)^{\frac{1}{2}} U^{\epsilon/4} \LL^B \\
& \ll Q^2 T^{1+\epsilon/4} \LL^B + (Q'^2 U \frac{x}{K})^{\frac{1}{2}} U^{\epsilon/4} \LL^B
\ll Qx^{1-\delta-\frac{\epsilon}{4}} \LL^{B}(zx^{-\frac{1}{2}-\epsilon/2}\Psi(T)^{-1}) + Q'^{\frac{1}{2}}U^{\epsilon/4}x^{\frac{1}{2}}\LL^B \\
& \ll Qz\LL^{-A}\Psi(T)^{-1}.
\end{align*}

The last line follows because we can pick $\epsilon$ small enough such that $Q'^{\frac{1}{2}}U^{\epsilon/4}\ll Q^{\frac{1}{2}}x^{\epsilon/4} \ll x^{\theta}\LL^{-A}=Q\LL^{-A}$.

Now take $K\leq 4Q'U$. The Cauchy-Schwarz inequality yields that
\[
\|KN\|_1 \leq \|M\|_2 \|N_1N_2^{\frac{1}{2}}\|_4 \|KN_2^{\frac{1}{2}}\|_4,
\]
where the integrals implicit in the norms are over $s=\frac{1}{2}+it$ with $t\in [U/2,U]$. If $Q'\leq U$ or $Q'\leq K$, then Lemma~\ref{lem:lbhp2} gives the bound
\begin{equation}
\label{eqn:lbhplem3cs}
\|KN_2^{\frac{1}{2}}\|_4^4 \ll (UQ'^2)^{1+\epsilon}M^\epsilon(1+M^2(UQ')^{-1/2}) \ll (Q^2 T)^{1+\epsilon} (1+N_2^2 (QT)^{-1/2}).
\end{equation}
On the other hand, if $K,U\leq Q'$, then Lemma~\ref{lem:kum1} gives
\[
\|KN_2^{\frac{1}{2}}\|_4^4=\|K^2 N_2\|_2^2 \ll (K^2 N_2 + Q'^2 U)x^\epsilon \ll (Q'^2 N_2 + Q'^3)x^\epsilon \ll (Q^2 T + N_2^2 Q(QT)^{1/2}) x^\epsilon,
\]
since $Q'\leq T$. In either case, applying Lemma~\ref{lem:kum1} to the remaining terms in~\eqref{eqn:lbhplem3cs} then gives, for arbitrarily small $\epsilon>0$,
\begin{align*}
\|KN\|_1
& \ll x^{\epsilon/50}(M+Q^2 T)^{1/2}(N_1^2 N_2+Q^2 T)^{1/4}(Q^2 T)^{1/4} (1+N_2^2 (QT)^{-1/2})^{1/4} \\
& \ll \max(1,zx^{-\frac{1}{2}-\epsilon/2}\Psi(T)^{-1})\max(M,Qx^{1-\delta})^{1/2}\max(N_1^2 N_2,Qx^{1-\delta})^{1/4} \\
& \qquad \max(Qx^{1-\delta},N_2^2x^{(1-\delta)/2})^{1/4} \\
& \ll x^\gamma \max(1,zx^{-\frac{1}{2}-\epsilon/2}\Psi(T)^{-1}),
\end{align*}
where
\[
\gamma=\frac{1}{2}\alpha'+\frac{1}{4}\max(2\beta_1+\beta_2,\theta+(1-\delta))+\frac{1}{4}(\theta+(1-\delta))+\frac{1}{4}\max(0,2\beta_2-\frac{1}{2}(1-\delta))-\frac{1}{25}\epsilon.
\]
Conditions (\ref{eqn:bhp3cond1})-(\ref{eqn:bhp3cond4}) guarantee that $\gamma\leq \frac{1}{2}+\theta-\frac{1}{25}\epsilon$, so we obtain that
\[
\Psi(T)\|KN\|_1 \ll Qx^{\frac{1}{2}-\frac{1}{25}\epsilon}\max(zx^{-\frac{1}{2}}, zx^{-\frac{1}{2}-\epsilon/2}) \ll Qz\LL^{-A},
\]
as needed.
\end{proof}

\begin{lemma}
\label{lem:lbhp4}
The conclusion of Lemma~\ref{lem:lbhp3} still holds if hypotheses (\ref{eqn:bhp3cond2})-(\ref{eqn:bhp3cond4}) are replaced by the following:
\begin{enumerate}[(i)]
\item Either $\beta_1 \leq \frac{1}{2}\theta+\frac{1}{2}(1-\delta)$ or $N_1$ is an L-factor;
\item \[\beta_2 \le \frac{1}{8}(1+3\delta)-\frac{1}{2}\alpha'+\frac{1}{2}\theta. \]
\end{enumerate}
\end{lemma}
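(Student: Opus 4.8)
The plan is to mimic the proof of Lemma~\ref{lem:lbhp3}, keeping the same overall casework on the size of $K$ relative to $4Q'U$, and only altering the step where Cauchy--Schwarz is applied so as to exploit the stronger hypothesis (ii) and the dichotomy in (i). As before, the case $Q'=1$ reduces to the corresponding argument of \cite[Lemma~4]{bhp}, so we may assume $Q'>1$, and the case $K>4Q'U$ is handled exactly as in Lemma~\ref{lem:lbhp3} (using $\|K\|_\infty\ll\LL$ and Lemma~\ref{lem:kum1}), since that argument made no use of hypotheses (\ref{eqn:bhp3cond2})--(\ref{eqn:bhp3cond4}). So the substance is the case $K\leq 4Q'U$.

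Here I would split according to the alternative in (i). If $\beta_1\leq\frac12\theta+\frac12(1-\delta)$, then $N_1$ behaves like ``part of $M$'' for the purposes of the estimate: I group $M N_1$ together and apply Cauchy--Schwarz in the form $\|KN\|_1\leq \|MN_1\|_2\,\|KN_2^{1/2}\|_4\,\|N_2^{1/2}\|_4$, or more simply bound $\|N_2^{1/2}\|_4^4=\|N_2\|_2^2$ by Lemma~\ref{lem:kum1} and treat $\|KN_2^{1/2}\|_4$ via Lemma~\ref{lem:lbhp2} (or Lemma~\ref{lem:kum1} when $K,U\leq Q'$) precisely as in~\eqref{eqn:lbhplem3cs}. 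If instead $N_1$ is an L-factor, then $N_1K$ is a product of two L-factors, which one can merge/treat as a single L-factor-type object up to $\LL^B$ losses by the same Mellin-convolution device used in Lemma~\ref{lem:lbhp1}; then I apply Cauchy--Schwarz as $\|KN\|_1\leq \|M\|_2\,\|N_2^{1/2}\|_4\,\|KN_1N_2^{1/2}\|_4$ and invoke Lemma~\ref{lem:lbhp2} with the combined L-factor of length $\leq 4Q'U$ (the hypothesis $K\leq 4Q'U$ and $N_1\leq x$ together with $\beta_2$ small keep the length under control). In both branches, collecting exponents and invoking Lemma~\ref{lem:kum1} on the leftover pieces yields a bound $x^{\gamma}\max(1,zx^{-1/2-\epsilon/2}\Psi(T)^{-1})$ with
\[
\gamma=\tfrac12\alpha'+\tfrac14\max\bigl(2\beta_1+\beta_2,\ \theta+(1-\delta)\bigr)+\tfrac14\bigl(\theta+(1-\delta)\bigr)+\tfrac14\max\bigl(0,\ 2\beta_2-\tfrac12(1-\delta)\bigr)-\tfrac1{25}\epsilon,
\]
exactly as in Lemma~\ref{lem:lbhp3} but now with $2\beta_2-\tfrac12(1-\delta)$ controlled by hypothesis (ii).

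To finish, I would check that (i)--(ii) force $\gamma\leq \tfrac12+\theta-\tfrac1{25}\epsilon$: hypothesis (ii) rearranges to $\tfrac12\alpha'+\tfrac14\cdot 2\beta_2-\tfrac18(1-\delta)\leq\tfrac1{16}(1+3\delta)+\tfrac14\theta$, which bounds the $\tfrac12\alpha'+\tfrac14\max(0,2\beta_2-\tfrac12(1-\delta))$ contribution, while the branch in (i) (either $\beta_1$ small, so $2\beta_1+\beta_2$ is dominated by $\theta+(1-\delta)$ up to the already-controlled $\beta_2$ term, or $N_1$ an L-factor, in which case $\beta_1$ does not enter the exponent count at all) controls the $\max(2\beta_1+\beta_2,\theta+(1-\delta))$ term; adding these with the explicit $\tfrac14(\theta+(1-\delta))$ piece gives the claim after a short computation. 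Then $\Psi(T)\|KN\|_1\ll Qx^{1/2-\epsilon/25}\max(zx^{-1/2},zx^{-1/2-\epsilon/2})\ll Qz\LL^{-A}$ as required. The main obstacle is the second branch of (i): one must verify carefully that merging the two L-factors $N_1$ and $K$ into a single L-factor-type polynomial of admissible length is legitimate and does not break the hypotheses of Lemma~\ref{lem:lbhp2} (in particular the constraints $K\leq 4Q'U$ and $Q'\leq\max(K,U)$ translated to the merged factor), since this is exactly the place where the structure of (i) rather than a crude size bound on $\beta_1$ is being used.
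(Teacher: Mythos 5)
There is a genuine gap in two places, and your claimed exponent is both unjustified and, in fact, too large to close the estimate.

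The paper uses the Cauchy--Schwarz split $\|KN\|_1 \le \|M\|_2\,\|N_1\|_4\,\|KN_2\|_4$, keeping $M$, $N_1$, and $KN_2$ all separate, and in \emph{both} branches of hypothesis~(i) derives the same bound $\|N_1\|_4 \ll Q^{1/4}x^{(1-\delta)/4}x^{\epsilon/4}\max(1,zx^{-1/2-\epsilon/2}\Psi(T)^{-1})^{1/4}$: via Lemma~\ref{lem:kum1} in the small-$\beta_1$ case (using $N_1^2\le x^{\theta+1-\delta}$), and via Lemma~\ref{lem:lbhp2} with trivial $M$ (or Lemma~\ref{lem:kum1} when $K,U\le Q'$) in the L-factor case with $N_1\le 4Q'U$; the case $N_1>4Q'U$ with $N_1$ an L-factor is absorbed into the opening $K>4Q'U$ argument. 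This yields
\[
\gamma=\tfrac12\alpha'+\tfrac12(\theta+1-\delta)+\tfrac14\max\bigl(0,4\beta_2-\tfrac12(1-\delta)\bigr)-\tfrac1{10}\epsilon,
\]
which is precisely the quantity that condition~(ii) is calibrated to bound by $\tfrac12+\theta-\tfrac1{10}\epsilon$.

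Your second branch of~(i) proposes merging $N_1$ and $K$ into a single L-factor before applying Lemma~\ref{lem:lbhp2}, but this step fails: the product of two L-factors is a Dirichlet polynomial whose coefficients are of divisor-function type, not a single character twist, so it is not an L-factor and Lemma~\ref{lem:lbhp2} does not apply to it. Lemma~\ref{lem:lbhp1} goes in the opposite direction (Heath--Brown decomposes into products where the long pieces are L-factors; it does not collapse two L-factors into one), so the ``Mellin-convolution device'' you invoke is not available. Separately, the exponent $\gamma$ you quote is copied from Lemma~\ref{lem:lbhp3} and does not arise from your proposed split $\|MN_1\|_2\,\|KN_2^{1/2}\|_4\,\|N_2^{1/2}\|_4$, which couples $\alpha$ and $\beta_1$ and would give a $\tfrac12\max(\alpha+\beta_1,\theta+1-\delta)$ contribution instead. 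Worse, the Lemma~\ref{lem:lbhp3} exponent is simply too large here: with $\delta=0.525$, $\theta$ small, $\alpha=\delta+\theta$, $\beta_1=\tfrac12\theta+\tfrac12(1-\delta)$, and $\beta_2$ at the top of the range allowed by~(ii), the term $\tfrac14\max(2\beta_1+\beta_2,\theta+1-\delta)$ exceeds $\tfrac14(\theta+1-\delta)$ by $\tfrac14\beta_2$, pushing $\gamma$ above $\tfrac12+\theta$, so conditions~(i)--(ii) would not suffice. The whole point of the different Cauchy--Schwarz split is to replace the $2\beta_1+\beta_2$ term by $\theta+1-\delta$ and the $2\beta_2$ threshold by $4\beta_2$, which is what lets~(ii) close the argument.
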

\begin{proof}
If either $K>4Q'U$, or $N_1>4Q'U$ and $N_1$ is an L-factor, then the argument from the beginning of the proof of Lemma~\ref{lem:lbhp3} still applies. So, assume we are not in these cases. Applying Cauchy-Schwarz in a slightly different way and then proceeding as before, we have
\begin{align*}
\|KN\|_1 & \leq \|M\|_2 \|N_1\|_4 \|KN_2\|_4 \\
& \ll x^{\epsilon/50}(M+Q^2 T)^{1/2}\|N_1\|_4 (Q^2 T)^{1/4} (1+N_2^4 (QT)^{-1/2})^{1/4}.
\end{align*}
Here we either have $\beta_1\leq \frac{1}{2}\theta + \frac{1}{2}(1-\delta)$, in which case Lemma~\ref{lem:kum1} gives
\[
\|N_1\|_4=\|N_1^2\|_2^{1/2}\ll (N_1^2+Q'^2 T)^{1/4} \ll Q^{1/4}x^{\frac{1-\delta}{4}}x^{\epsilon/4}\max(1,zx^{-\frac{1}{2}-\epsilon/2}\Psi(T)^{-1})^{1/4},
\]
or $N_1$ is an L-factor with $N_1\leq 4Q'U$, in which case applying Lemma~\ref{lem:lbhp2} or Lemma~\ref{lem:kum1} depending on whether $Q'\leq \max(K,U)$ again gives
\[
\|N_1\|_4 \ll (Q'^2 T)^{1/4}(1+(Q'T)^{-1/2})^{1/4} x^{\epsilon/4} \ll Q^{1/4}x^{\frac{1-\delta}{4}}x^{\epsilon/4}\max(1,zx^{-\frac{1}{2}-\epsilon/2}\Psi(T)^{-1})^{1/4}.
\]
Thus, $\|KN\|_1\ll x^\gamma \max(1,zx^{-\frac{1}{2}-\epsilon/2}\Psi(T)^{-1})$, where we now let
\[
\gamma=\frac{1}{2}\alpha'+\frac{1}{2}(\theta+1-\delta) - \frac{1}{10}\epsilon + \frac{1}{4}\max(0,4\beta_2-\frac{1}{2}(1-\delta)).
\]
Condition~(ii) then gives $x^\gamma \ll Qx^\frac{1}{2} \LL^{-A}$, and the proof then concludes as before.
\end{proof}

\begin{lemma}
\label{lem:lbhp5}
Let $T\leq x$ and suppose $Q\leq \min(zx^{-\delta-\epsilon/2},T)$. 
Let $K(s)$ be an L-factor. Let $M=x^{\alpha}$, $N=x^{\beta}$, with $KMN=x$, and suppose that $\alpha \le \delta+\theta$ and 
\[
\beta \leq \min\left(\frac{1}{2}(3\delta+1-4\alpha')+2\theta,\frac{1}{5}(3+\delta-4\alpha')+\frac{6}{5}\theta\right).
\]
Suppose further that $M(s)$ and $N(s)$ are of bounded product type. Then
\[
\Psi(T)\sum_{q\leq Q} \sideset{}{^*}\sum_{\chi}\int_{2}^{T}\left|(MNK)\left(\frac{1}{2}+it,\chi\right)\right|dt \ll Q z \LL^{-A}.
\]
\end{lemma}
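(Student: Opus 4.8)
The plan is to reduce Lemma~\ref{lem:lbhp5} to Lemma~\ref{lem:lbhp3} (equivalently Lemma~\ref{lem:lbhp4}) by artificially splitting one of the two bounded-product-type polynomials into two pieces, so that the three-polynomial hypotheses of Lemma~\ref{lem:lbhp3} can be checked. Concretely, write $MNK = x$ with $M = x^\alpha$, $N = x^\beta$; the idea is to factor $N = N_1 N_2$ with $N_1 = x^{\beta_1}$, $N_2 = x^{\beta_2}$, $\beta_1 + \beta_2 = \beta$, in such a way that either the hypotheses \eqref{eqn:bhp3cond1}--\eqref{eqn:bhp3cond4} of Lemma~\ref{lem:lbhp3} or the alternative hypotheses of Lemma~\ref{lem:lbhp4} hold for the factorization $M N_1 N_2 K = x$. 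Since $M$ and $N$ are of bounded product type, $N$ is itself a product of prime-variable polynomials $\sum_{p_i \sim P_i}$, and we may group these factors into two sub-products $N_1, N_2$; any desired split point $\beta_1$ can be achieved up to an additive error of $O(\log w / \log x) = o(1)$, which is harmless since all the conditions are closed inequalities with slack (we have strict inequalities after allowing $\epsilon, \eta$-losses). This is exactly the standard device used in \cite{bhp} to pass from the two-variable ``fundamental lemma'' to the one-variable statement.

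The key steps, in order, would be: first, handle the degenerate cases where no nontrivial split is needed --- e.g.\ if $N$ is already short enough that $MN \le (QT)^{1/2}$ or $K > 4Q'U$ (so the trivial $L^\infty$ bound on the $L$-factor from the start of the proof of Lemma~\ref{lem:lbhp3} applies directly), or if $\beta$ itself is so small that taking $N_1 = N$, $N_2 = 1$ already satisfies the hypotheses of Lemma~\ref{lem:lbhp4}. Second, in the main case, choose the split parameter $\beta_1$ to optimize: one natural choice is to make $N_1$ as long as allowed by condition~(i) of Lemma~\ref{lem:lbhp4}, i.e.\ take $\beta_1 = \min(\beta, \tfrac12 \theta + \tfrac12(1-\delta))$ and $\beta_2 = \beta - \beta_1$, then verify condition~(ii), $\beta_2 \le \tfrac18(1+3\delta) - \tfrac12 \alpha' + \tfrac12\theta$, follows from the hypothesis $\beta \le \tfrac12(3\delta + 1 - 4\alpha') + 2\theta$ (after a short computation combining the two pieces). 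Third, if that split fails --- which can happen when $\beta$ is in an intermediate range --- fall back on Lemma~\ref{lem:lbhp3} with a more balanced split, checking \eqref{eqn:bhp3cond2}--\eqref{eqn:bhp3cond4}; here the second hypothesis of the lemma, $\beta \le \tfrac15(3 + \delta - 4\alpha') + \tfrac65\theta$, is designed precisely so that the worst of conditions \eqref{eqn:bhp3cond2}--\eqref{eqn:bhp3cond4} can be met. Finally, assemble: since $[2, T]$ is covered by $O(\log x)$ dyadic blocks $[U/2, U]$ and the summation over $q \le Q$ is covered by $O(\log x)$ dyadic blocks $q \sim Q'$, summing the per-block bounds $Qz\LL^{-A}$ and adjusting $A$ gives the stated estimate.

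The main obstacle I expect is purely the bookkeeping of the case analysis on $\beta$ (and on $\alpha'$, which equals $\max(\alpha, \theta + 1 - \delta)$): one must check that the single hypothesis on $\beta$ in Lemma~\ref{lem:lbhp5}, together with $\alpha \le \delta + \theta$, always admits \emph{some} legal split $\beta = \beta_1 + \beta_2$ feeding into Lemma~\ref{lem:lbhp3} or~\ref{lem:lbhp4}. This is a finite linear-programming verification in the variables $(\alpha', \beta, \theta, \delta)$, but the constraints interact (the $\max(\cdot,\cdot)$ inside $\gamma$ in Lemma~\ref{lem:lbhp3}, and the ``$N_1$ is an $L$-factor'' escape clause in Lemma~\ref{lem:lbhp4}), so one has to be careful that the $\theta$-terms --- which are new relative to \cite{bhp} --- line up. The saving grace is that $\theta$ only ever enters with a nonnegative coefficient on the right-hand side of every constraint, so all the $\theta = 0$ arguments of \cite[Lemma~5]{bhp} go through with the $\theta$-terms simply carried along; the extra slack $\Psi(T) Q T \le x^{1-\delta-\epsilon/2}(zx^{-1/2})$ noted in the text absorbs the losses. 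I would therefore present the proof as: reduce to Lemma~\ref{lem:lbhp3} or~\ref{lem:lbhp4} by an explicit choice of $\beta_1$, cite \cite[Lemma~5]{bhp} for the (identical) combinatorial verification that the hypotheses transfer, and remark that the modifications are confined to tracking the additive $\theta$-contributions, which never worsen a constraint.
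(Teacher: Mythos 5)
Your proposal takes essentially the same route as the paper's (very brief) proof: both reduce Lemma~\ref{lem:lbhp5} to the three-polynomial Lemmas~\ref{lem:lbhp3} and~\ref{lem:lbhp4} by splitting the bounded-product-type polynomial $N$ and then refer to the case analysis of \cite[Lemma~5]{bhp}, with the $\theta$-terms carried through the constraints. The paper is more terse --- it simply cites \cite[Lemma~5]{bhp}, records the modified split parameter $a=\max\bigl(0,2\beta-(1+\delta)+2\alpha'-3\theta\bigr)$, and notes the lemma replacements --- whereas you spell out the reduction strategy and propose an explicit choice of $\beta_1$; your reasoning is consistent with the paper's, and the extra detail is a fair expansion of what the paper leaves implicit.
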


\begin{proof}
The proof is essentially the same as the proof of \cite[Lemma~5]{bhp}, with $a$ now defined by
\[
a=\max(0,2\beta-(1+\delta)+2\alpha'-3\theta),
\]
and with the invocations of Lemmas~1-4 of~\cite{bhp} replaced by invocations of Lemmas~\ref{lem:lbhp2}-\ref{lem:lbhp4}.

\end{proof}

Note that by symmetry we have the same bound for the integral over the range $[-T,-2]$. Since $M,N$ are of bounded product form and $K$ is an $L$-factor, by \cite[Lemma~5]{2bhp} and \cite[Lemma~6]{kumchev} we have 
\[
\|MNK\|_\infty \ll (MN)^{\frac{1}{2}}\LL^{-A} \ll x^{1/2} \LL^{-A},
\]
so that we can extend the result of Lemma~\ref{lem:lbhp5} to an integral over the whole interval $[-T,T]$. That is,
\[
\Phi(T) \sum_{q\leq Q} \sideset{}{^*}\sum_{\chi}\int_{-T}^{T}|(MNK)(\frac{1}{2}+it,\chi)|dt \ll Q z \LL^{-A}.
\]

\subsection{Sieve estimates}
We use the bounds on Dirichlet polynomials derived in the previous section to obtain the Bombieri-Vinogradov style error bounds we want for sequences under certain conditions on sequence length.

The following lemma, analogous to Lemma~6 of \cite{bhp}, is the key to going from Dirichlet polynomial bounds to these error bounds.
\begin{lemma}
\label{lem:lbhp6}
Let $F(s,\chi)=\sum_{k \sim x}c_k\chi(k) k^{-s}$. If for every $T\leq x$ and $Q\leq \min(zx^{-\delta-\eta},T)$ we have
\[
\Psi(T) \sum_{q\sim Q} \sideset{}{^*} \sum_\chi\int_{-T}^{T} \Big|F\Big(\frac{1}{2}+it,\chi\Big)\Big|dt \ll Q z \LL^{-A},
\]
then
\begin{equation}
\label{eqn:errorbd}
\sum_{q\sim Q} \sideset{}{^*} \sum_\chi \max_{h\leq z}\max_{y\sim x} |E_c(y,h;\chi)|\ll Qz\LL^{-A}.
\end{equation}

\end{lemma}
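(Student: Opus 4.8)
The plan is the standard one: express $E_c(y,h;\chi)$ as an integral of $F(\,\cdot\,,\chi)$ along the critical line by a truncated Perron formula, and then feed the hypothesis into a dyadic decomposition of that integral. Two harmless reductions streamline this. First, for each pair $(q,\chi)$ with $q\sim Q$ and $\chi$ primitive, fix $y=y_{q,\chi}\sim x$ and $h=h_{q,\chi}\le z$ essentially attaining $\max_{h\le z}\max_{y\sim x}|E_c(y,h;\chi)|$ (the supremum being approached by finitely many threshold values of $\lfloor y\rfloor,\lfloor y-h\rfloor$); perturbing $y$ by a half-integer we may assume $y,\,y-h,\,y-z_0\notin\ZZ$. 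Second, it suffices to treat $z\le x^{3/5+\eta}$, since for larger $z$ one partitions $(y-h,y]$ into $O(zx^{-3/5-\eta})$ intervals of length $\le x^{3/5+\eta}$ and uses the triangle inequality (for the principal character $q=1$, where the $\delta(\chi)$-term occurs, the same partition works after collecting the $h_jz_0^{-1}$-weighted pieces, with a negligible error since $z\ll x$). For $z\le x^{3/5+\eta}$ one has $Q\le zx^{-\delta-\eta}\le x^{3/5-\delta}$, hence $Qz\le x^{6/5-\delta+\eta}<x$; in particular $Q<x/z$, so that $\Psi(Q)=zx^{-1/2}$, and also $h/z_0\le z/z_0\ll1$ (as $z_0=x^{1-o(1)}$), a bound used for the $\delta(\chi)$-term of the kernel.

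Since $F(s,\chi)$ is a finite Dirichlet polynomial, hence entire, and the kernel $\mathcal K(s):=\frac{y^s-(y-h)^s}{s}-\delta(\chi)\,\frac{h}{z_0}\cdot\frac{y^s-(y-z_0)^s}{s}$ is entire (the apparent pole at $s=0$ is removable), applying the truncated Perron formula at height $T_0=x$ on a line $\Re s=1+1/\LL$ and shifting the contour to $\Re s=\tfrac12$ crosses no poles and gives
\[ E_c(y,h;\chi)=\frac1{2\pi i}\int_{1/2-ix}^{1/2+ix}F(s,\chi)\,\mathcal K(s)\,ds+O(x^{\varepsilon}), \]
the error absorbing the Perron truncation error and the two horizontal segments, both $O(x^{\varepsilon})$ because $|c_k|\ll x^{\varepsilon}$ and $y,y-h$ lie at distance $\gg 1$ from the integers. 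Summed over the $\ll Q^2$ admissible pairs, this contributes $\ll Q^2x^{\varepsilon}\ll Qz\LL^{-A}$ (using $Q\le zx^{-\delta-\eta}$ and $\delta>\tfrac12$). For the main term, $\bigl|\tfrac{y^s-(y-h)^s}{s}\bigr|=\bigl|\int_{y-h}^{y}u^{s-1}\,du\bigr|\ll\min(hx^{-1/2},x^{1/2}|t|^{-1})$ for $|t|\ge1$ and $\ll hx^{-1/2}$ for $|t|\le1$, and the same bounds hold for the $\delta(\chi)$-term after using $h/z_0\ll1$ to cancel the $z_0$'s. Hence, using $h\le z\le x$, we obtain the \emph{uniform} estimate $\sup_{y\sim x,\,h\le z}|\mathcal K(\tfrac12+it)|\ll\Psi(\max(|t|,1))$, and therefore
\[ \sum_{q\sim Q}\sideset{}{^*}\sum_\chi\max_{h\le z}\max_{y\sim x}|E_c(y,h;\chi)|\ \ll\ \int_{|t|\le x}\Psi(\max(|t|,1))\sum_{q\sim Q}\sideset{}{^*}\sum_\chi|F(\tfrac12+it,\chi)|\,dt\ +\ Qz\LL^{-A}. \]

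It remains to bound the integral, which we split into the low-frequency range $|t|\le Q$ and the dyadic ranges $|t|\sim T$, $Q\le T\le x$. On a dyadic range with $T\ge Q$ we have $\Psi(\max(|t|,1))\le\Psi(T/2)\ll\Psi(T)$, so that range contributes $\ll\Psi(T)\sum_{q\sim Q}\sideset{}{^*}\sum_\chi\int_{-T}^{T}|F(\tfrac12+it,\chi)|\,dt\ll Qz\LL^{-A}$ \emph{directly} by the hypothesis applied with this $T$ and this $Q$ (legitimate since $Q\le\min(zx^{-\delta-\eta},T)$); summing over the $\ll\LL$ dyadic values of $T$ costs a factor $\LL$, absorbed by enlarging $A$. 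On the low-frequency range $|t|\le Q$ — the one place the hypothesis cannot be used below height $Q$ — we invoke it at the single height $T=Q$, which is admissible since $Q\le\min(zx^{-\delta-\eta},Q)$. Here $\Psi(\max(|t|,1))=zx^{-1/2}=\Psi(Q)$ throughout $|t|\le Q$ (this is exactly where $Q<x/z$ enters), so this range contributes $\ll zx^{-1/2}\sum_{q\sim Q}\sideset{}{^*}\sum_\chi\int_{-Q}^{Q}|F(\tfrac12+it,\chi)|\,dt=\Psi(Q)\sum_{q\sim Q}\sideset{}{^*}\sum_\chi\int_{-Q}^{Q}|F(\tfrac12+it,\chi)|\,dt\ll Qz\LL^{-A}$. (The case $q=1$ is included: then $Q=1$, the dyadic ranges are $1\le T\le x$, and the low-frequency range $|t|\le1$ is handled by the hypothesis at $T=1$, admissible once $z\ge x^{\delta+\eta}$ — the only regime in which the left side is non-empty.) Adding the three contributions gives \eqref{eqn:errorbd}.

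The Perron and contour bookkeeping and the kernel estimate are routine, paralleling Lemma~6 of \cite{bhp} and Section~4 of \cite{kumchev}; the only step needing a genuine (if small) idea is the low-frequency range $|t|\le Q$, where the decisive points are that the hypothesis may be invoked at the single admissible height $T=Q$ and that, after reducing to $z\le x^{3/5+\eta}$, the inequality $\delta>\tfrac12$ forces $Q<x/z$ so that the kernel's size $zx^{-1/2}$ near $t=0$ matches $\Psi(Q)$ exactly. Reconciling these two scales is where I expect the main obstacle to lie.
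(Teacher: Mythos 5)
Your overall route is the same as the paper's: truncated Perron formula at height $x$, shift to $\Re s=\tfrac12$, the kernel bound $|\mathcal K(\tfrac12+it)|\ll\Psi(\max(|t|,1))$, a dyadic decomposition over $T\in[Q,x]$, the hypothesis invoked at each dyadic scale, and the $q=1$ term separated off. (The paper compresses the Perron bookkeeping into a citation of \cite[Lemma~6]{bhp} and \cite[Lemma~9]{kumchev}.) You also correctly identify the one genuinely delicate point: to absorb $|t|\le Q$ into the hypothesis at $T=Q$ one needs $\Psi$ constant on $[1,Q]$, i.e.\ $Q<x/z$.

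The gap is in how you force $Q<x/z$. You reduce to $z\le x^{3/5+\eta}$ by partitioning $(y-h,y]$ into $J\asymp zx^{-3/5-\eta}$ pieces of length $\le z':=x^{3/5+\eta}$ and invoking the triangle inequality. For this to yield $Qz\LL^{-A}$ in the end you would need the conclusion for the shortened scale, namely $\sum_{q\sim Q}\sideset{}{^*}\sum_\chi\max_{h\le z'}\max_{y\sim x}|E_c|\ll Qz'\LL^{-A}$ uniformly for $Q\le zx^{-\delta-\eta}$, so that multiplying by $J$ recovers $Qz\LL^{-A}$. But the hypothesis you are given involves $z$, not $z'$: it supplies $\int_{-T}^T\sum|F|\ll Qz\LL^{-A}/\Psi_z(T)$, and on any dyadic scale $T\ge x/z'$ (where $\Psi_{z'}(T)=\Psi_z(T)=x^{1/2}/T$) this yields a per-scale contribution of order $Qz\LL^{-A}$, not $Qz'\LL^{-A}$. (Likewise, the Perron tail error per piece is $\ll Q^2x^\varepsilon$, which already fails to be $\ll Qz'\LL^{-A}$ for the larger $Q$'s that the original $z$ permits.) Summing over the $J\asymp z/z'$ pieces then produces $\sim Qz^2z'^{-1}\LL^{-A}$, off by a factor $z/z'$. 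So the reduction, as stated, is circular: the hypothesis for $z$ does not self-improve to the hypothesis for $z'$.

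In the paper the constraint is not derived inside the lemma at all; the lemma is only ever invoked after the external reduction ``It suffices to consider $H\le x^{3/5+\eta}$'' (end of the paragraph following Definition~\ref{def:fine}), which guarantees $Q\le zx^{-\delta-\eta}\le x^{3/5-\delta}\ll x/z$ in every application. The clean fix for your write-up is therefore to add $Qz\le x^{1-\eta}$ (equivalently $z\le x^{3/5+\eta}$ in the paper's parameter range) as a standing hypothesis of the lemma rather than attempting to deduce it, after which the rest of your argument goes through.
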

\begin{proof}
For the term with $q=1$ and $\chi$ the trivial character, \cite[Lemma~6]{bhp} gives a bound of $z\LL^{-A}$. For every other $\chi$, applying the truncated Perron formula and arguing as in \cite[Lemma~9]{kumchev} yields
\[
\max_{h\leq z}\max_{y\sim x} |E_c(y,h;\chi)|\ll \LL \max_{Q\leq T\leq x} \Psi(T) \int_{-T}^T \Big|F\Big(\frac{1}{2}+it,\chi\Big)\Big|dt+x^{\eta},
\] 
which yields the desired result upon summing over $q$ and $\chi$.
\end{proof}

The following lemma is an analogue of Lemma~8 in \cite{bhp}, and the proof carries over with no significant changes.

\begin{lemma}
\label{lem:lbhp8}
Let $M(s)=\sum_{m \sim M}a_m \chi(m) m^{-s}$, $N(s)=\sum_{n \sim N}b_n \chi(n) n^{-s}$, $M=x^{\alpha}$, $N=x^{\beta}$. Suppose that $\alpha \le \delta+\theta-\epsilon$ and
\[
\beta \le \min\Big(\frac{1}{2}(3\delta+1-4\alpha')+2\theta,\frac{1}{5}(3+\delta-4\alpha')+\frac{6}{5}\theta\Big)-2\epsilon.
\]
Finally, suppose that $M$ and $N$ are of bounded product type, and $Q\leq \min(zx^{-\delta-\epsilon/2},T)$.

\[
c(k)=\sum_{\substack{mn\ell=k \\ m\sim M, n\sim N}} a_m b_n \psi(\ell,w),
\]
where $w=\exp\left(\frac{\LL}{\log \LL}\right)$. Then equation (\ref{eqn:errorbd}) holds.
\end{lemma}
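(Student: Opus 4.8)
The plan is to follow the proof of \cite[Lemma~8]{bhp} verbatim, replacing the Dirichlet polynomial estimates for the Riemann zeta function there by their $L$-function analogues, namely Lemmas~\ref{lem:kum1}--\ref{lem:lbhp6}. First I would form the generating Dirichlet polynomial $F(s,\chi)=\sum_{k\sim x}c(k)\chi(k)k^{-s}$, which (after a standard dyadic decomposition costing only a factor $\LL^{O(1)}$) factors as
\[
F(s,\chi)=M(s,\chi)\,N(s,\chi)\,\Pi(s,\chi),\qquad \Pi(s,\chi)=\sum_{\ell\asymp L}\psi(\ell,w)\chi(\ell)\ell^{-s},
\]
with $L\asymp x/(MN)=x^{1-\alpha-\beta}$. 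By Lemma~\ref{lem:lbhp6}, it then suffices to prove, for every $T\le x$ and every $Q\le\min(zx^{-\delta-\eta},T)$, that
\[
\Psi(T)\sum_{q\sim Q}\sideset{}{^*}\sum_\chi\int_{-T}^{T}\Big|F\Big(\tfrac12+it,\chi\Big)\Big|\,dt\ll Qz\LL^{-A}.
\]

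Next I would decompose $F$ into manageable pieces. Since every prime factor of $\ell$ exceeds $w=\exp(\LL/\log\LL)$, the argument of \cite[Lemma~8]{bhp} applies Heath--Brown's identity --- in the $L$-function form underlying Lemma~\ref{lem:lbhp1} and \cite{6bhp} --- together with Buchstab's identity to write $\Pi$, and likewise the bounded-product-type polynomials $M$ and $N$, as a sum of $\ll\LL^B$ Dirichlet polynomials, each of the shape $\LL^B\prod_{i=1}^{h}N_i(s,\chi)$ with $h\le B$, $\prod_i N_i\le x$, where every $N_i$ is of bounded product type or an $L$-factor and every $N_i$ of length exceeding $(QT)^{1/2}$ is an $L$-factor. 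For each such term I would regroup the constituent polynomials into at most one $M$-part, at most two $N_j$-parts, and a single $L$-factor $K$, merging the excess bounded-product-type factors into whichever part keeps the relevant size inequalities satisfied; this yields a term of the form $MN_1N_2K$ or $MN_1K$. The hypotheses on the exponents are exactly what make this work: $\alpha\le\delta+\theta-\epsilon$ gives \eqref{eqn:bhp3cond1} with room to spare, and the two-fold minimum in the hypothesis on $\beta$, together with the $2\epsilon$ of slack and the definition $\alpha'=\max(\alpha,\theta+1-\delta)$, is precisely the constraint appearing in Lemmas~\ref{lem:lbhp3}, \ref{lem:lbhp4}, and~\ref{lem:lbhp5}. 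Applying the appropriate one of these lemmas bounds each term by $\ll Qz\LL^{-A}$; summing over the $\ll\LL^B$ terms and absorbing the loss into $A$ gives the displayed bound, and Lemma~\ref{lem:lbhp6} then yields~\eqref{eqn:errorbd}.

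The only point requiring genuine care --- and the sole substantive departure from \cite{bhp} --- is the bookkeeping of the exponent $\theta=\log_x Q$: one must track that the extra factors of $Q'$ coming from the mean-value inputs (the $Q'^2U$ term of Lemma~\ref{lem:kum1} and the $(UQ'^2)^{1+\epsilon}$ term of Lemma~\ref{lem:lbhp2}) survive the H\"older and Cauchy--Schwarz splittings to produce exactly the $2\theta$, $\tfrac{3}{2}\theta$, and $\tfrac{6}{5}\theta$ allowances in the hypotheses, and that the standing assumption $Q\le zx^{-\delta-\epsilon/2}$ suffices to absorb the remaining power of $x$ into $z\LL^{-A}$. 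This is where the argument could go wrong, but all of this has already been verified in the proofs of Lemmas~\ref{lem:lbhp3}--\ref{lem:lbhp5}; consequently, at the level of Lemma~\ref{lem:lbhp8} itself the proof is a routine transcription of the one in \cite{bhp}, with the zeta-function lemmas swapped for their $L$-function counterparts.
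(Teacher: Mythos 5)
Your proposal is correct and takes essentially the same route as the paper, which itself gives no written proof beyond the remark that ``the proof carries over with no significant changes'' from \cite[Lemma~8]{bhp}; your sketch (reduce to an $L^1$ bound via Lemma~\ref{lem:lbhp6}, decompose the sieve factor via the Heath--Brown identity underlying Lemma~\ref{lem:lbhp1}, regroup so that all long factors are $L$-factors, then invoke Lemmas~\ref{lem:lbhp3}--\ref{lem:lbhp5}) is precisely that transcription, and your closing observation that the only new bookkeeping is the $\theta$-tracking already done in Lemmas~\ref{lem:lbhp3}--\ref{lem:lbhp5} is the right diagnosis.
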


The next lemma is analogous to Lemma~12 of \cite{bhp} and Lemma~10 of \cite{kumchev}.

\begin{lemma}
\label{lem:lbhp12}
Let $\alpha \in [0,\frac{1}{2}]$, and write
\[
h=\Big\lceil \frac{\frac{1}{2}-\alpha}{2\delta-1}\Big\rceil,
\]
\[
\alpha^*=\max\left(\frac{2h(1-\delta)-\alpha}{2h-1},\frac{2(h-1)\delta+\alpha}{2h-1}\right).
\]
Suppose
\[
0 \le \beta \le \min(\frac{1}{2}(3\delta+1-4\alpha^{*}), \frac{1}{5}(3+\delta-4\alpha^{*}))-2\epsilon,
\]
and $Q\leq zx^{-\delta-\eta}$. Let $M(s)=\sum_{m \sim M}a_m \chi(m) m^{-s}$, $N(s)=\sum_{n \sim N}b_n\chi_n n^{-s}$, $2M=x^{\alpha}$, $N=x^{\beta}$, where $M(s)$ and $N(s)$ are of bounded product type. Let 
\[ 
I_h=\left[\frac{1}{2}-2h\left(\delta-\frac{1}{2}\right),\frac{1}{2}-(2h-2)\left(\delta-\frac{1}{2}\right)\right),
\] 
and let
\[
\nu(\alpha)=\min\left(\frac{2}{2h-1}(\delta-\alpha),\frac{36\delta-17}{19}\right),
\]
when $\alpha \in I_h$, for $h \ge 1$. Then,  Equation~(\ref{eqn:errorbd}) holds for
\[
c(k)=\sum_{\substack{mn\ell=k \\ m\sim M, n\sim N}} a_m b_n \psi(\ell,x^{\nu}),
\]
for every $\nu\leq \nu(\alpha)$.
\end{lemma}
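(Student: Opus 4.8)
The plan is to follow the proof of Lemma~12 in \cite{bhp} (equivalently Lemma~10 in \cite{kumchev}), replacing every appeal to Lemmas~1--5 of \cite{bhp} by the $L$-function analogues Lemmas~\ref{lem:lbhp2}--\ref{lem:lbhp5} and Lemma~\ref{lem:lbhp8} proved above, and carrying the extra $\theta$-terms through exactly as in the proofs of Lemmas~\ref{lem:lbhp3}--\ref{lem:lbhp5}. By Lemma~\ref{lem:lbhp6} it suffices to show that, for every $T\le x$ and $Q\le\min(zx^{-\delta-\eta},T)$,
\[
\Psi(T)\sum_{q\sim Q}\sideset{}{^*}\sum_\chi\int_{-T}^{T}\Big|C\Big(\tfrac12+it,\chi\Big)\Big|\,dt\ll Qz\LL^{-A},
\]
where $C(s,\chi)=M(s,\chi)N(s,\chi)P(s,\chi)$ is the Dirichlet polynomial attached to $c(k)$, with $M,N$ the polynomials of $a_m,b_n$ and $P(s,\chi)=\sum_\ell\psi(\ell,x^\nu)\chi(\ell)\ell^{-s}$ supported on $\ell\asymp x/(MN)$.

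First I would expand $P(s,\chi)$ by iterated Buchstab: starting from $\psi(\ell,x^\nu)=\psi(\ell,w)-\sum_{w\le p<x^\nu,\,p\ell'=\ell}\psi(\ell',p)$ and continuing on the $\psi(\ell',p)$ terms, one writes $\psi(\ell,x^\nu)$ as a bounded-length alternating sum of pieces $\sum_{mnp_1\cdots p_j\ell'=k}a_mb_n\,\psi(\ell',p_j)$, each prime $p_i$ lying in $[w,x^\nu)$. Following the case analysis of \cite[Lemma~12]{bhp} and Lemma~10 of \cite{kumchev}, I would split each piece according to the sizes of the $p_i$ relative to the thresholds defining the intervals $I_1,I_2,\dots$: in each case either the $m$-factor, enlarged by absorbing some of the $p_i$ — still of bounded product type, since $M$ is and the $p_i$ are single primes — or a product of some of the $p_i$ themselves serves as the ``long'' factor of a Type~II split to which Lemma~\ref{lem:lbhp8} applies (with sieve level $\ge w$), or, when an $L$-factor is available, of Lemma~\ref{lem:lbhp5}; and in the residual role-reversal configuration $\alpha\in I_h$ the leftover $\psi(\ell',p)$, whose cofactor has by then fallen below $x^\nu$, is bounded by Lemma~\ref{lem:lbhp5} or trivially. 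The integer $h=\lceil(\tfrac12-\alpha)/(2\delta-1)\rceil$ measures the depth of surgery needed before the $m$-factor can be pushed past $x^{1/2}$ using primes of size $\le x^\nu\le x^{2\delta-1}$; $\alpha^*$ is the worst exponent that the long factor can take over all these cases, the maximum in its definition reflecting the two ways the surgery can terminate; and the two terms of $\nu(\alpha)=\min\big(\tfrac{2}{2h-1}(\delta-\alpha),\tfrac{36\delta-17}{19}\big)$ are exactly the constraints on the level $x^\nu$ making this analysis go through — the first keeping the enlarged factor inside the window of Lemma~\ref{lem:lbhp8}, the second coming from the role-reversal case where Lemma~\ref{lem:lbhp5} is invoked.

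Then I would finish the bookkeeping. Termination is automatic: every peeled prime has size $\ge w=\exp(\LL/\log\LL)$, so the recursion stops after $O(\log\LL)$ steps and the total number of pieces is $\LL^{O(1)}$, absorbed into the $\LL^{-A}$. It remains to verify that the exponent hypotheses of Lemmas~\ref{lem:lbhp5} and~\ref{lem:lbhp8} hold in every surviving piece. The requirement $\alpha\le\delta+\theta-\epsilon$ of those lemmas is automatic here since $\alpha\le\tfrac12<\delta$ (recall $\delta\ge0.525$ and $\epsilon$ is small); the factor playing the role of their $M$-polynomial has exponent at most $\alpha^*$, and since $\theta+(1-\delta)<\tfrac12\le\alpha^*$ for $\delta\ge0.525$ and $\theta$ small, its $\alpha'$ equals that exponent and is $\le\alpha^*$; and since $\theta\ge0$, deleting the $+2\theta$ and $+\tfrac65\theta$ from the $\beta$-conditions of Lemmas~\ref{lem:lbhp5} and~\ref{lem:lbhp8} only strengthens our hypothesis $0\le\beta\le\min\big(\tfrac12(3\delta+1-4\alpha^*),\tfrac15(3+\delta-4\alpha^*)\big)-2\epsilon$. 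Finally $Q\le zx^{-\delta-\eta}$ is stronger than the $Q\le\min(zx^{-\delta-\epsilon/2},T)$ demanded there once $\eta=\epsilon/2$. Feeding the resulting Dirichlet-polynomial bound into Lemma~\ref{lem:lbhp6} yields~\eqref{eqn:errorbd}.

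The main obstacle I anticipate is the combinatorial organization of the Buchstab expansion: one must route \emph{every} generated piece either to a genuine Type~II split satisfying Lemma~\ref{lem:lbhp8} or to the Type~I/$L$-factor estimate of Lemma~\ref{lem:lbhp5}, with no uncontrolled remainder, and check that the thresholds $h$, $\alpha^*$, $\nu(\alpha)$ — taken verbatim from \cite{bhp} — remain exactly right once the $\theta$-dependent terms of the $L$-function setting are carried through. Since those $\theta$-terms always enter the component lemmas with a favorable sign and our $\beta$-range is the $\theta$-free one, I expect no genuinely new case to arise; the effort lies in confirming this uniformly over the finitely many sub-cases and in matching them against the inequalities that define $\alpha^*$ and $\nu(\alpha)$.
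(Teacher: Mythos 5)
Your overall strategy matches the paper's: both follow BHP's Lemma~12, invoking Lemma~\ref{lem:lbhp8} with $x^{\alpha^*}$ in place of $M=x^\alpha$, and argue that the $\theta$-dependent terms introduced by the $L$-function versions of the component estimates do not tighten the hypotheses beyond the $\theta$-free bounds in the lemma statement.

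However, the step where you justify that the $\theta$-corrections disappear contains a genuine inequality error. You write $\theta+(1-\delta)<\tfrac12\le\alpha^*$, but the correct range is $1-\delta\le\alpha^*\le\tfrac12$ (this is what the paper's proof records, and it follows directly from $h=\lceil(\tfrac12-\alpha)/(2\delta-1)\rceil$). In particular $\alpha^*$ can equal $1-\delta$ (e.g. $h=1$, $\alpha=1-\delta$), in which case $\alpha'=\max(\alpha^*,\theta+1-\delta)=\theta+1-\delta>\alpha^*$ and your inference ``its $\alpha'$ equals that exponent'' fails. The conclusion you want is still true, but for a different reason: when $\alpha'=\theta+1-\delta$, the $\beta$-condition of Lemma~\ref{lem:lbhp8} becomes
\[
\beta\le\min\Bigl(\tfrac12(7\delta-3),\;\tfrac15(5\delta-1)+\tfrac25\theta\Bigr)-2\epsilon,
\]
the $\theta$'s having nearly cancelled, and since $\alpha^*\ge1-\delta$ one has $\tfrac12(3\delta+1-4\alpha^*)\le\tfrac12(7\delta-3)$ and $\tfrac15(3+\delta-4\alpha^*)\le\tfrac15(5\delta-1)$, so the $\theta$-free hypothesis of the present lemma still implies Lemma~\ref{lem:lbhp8}'s hypothesis. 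You should argue the elimination of $\theta$ from the \emph{lower} bound $\alpha^*\ge1-\delta$, not from the incorrect lower bound $\alpha^*\ge\tfrac12$. A smaller omission: the paper also replaces \cite[Lemma~9]{bhp} by \cite[Lemma~2]{kumchev} in adapting this argument, which is the other substitution needed and should be noted.
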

\begin{proof}
The proof proceeds as in \cite[Lemma~12]{bhp}, with an application of Lemma~\ref{lem:lbhp8} using $x^{\alpha^*}$ in place of $M=x^\alpha$. In place of \cite[Lemma~9]{bhp}, \cite[Lemma~2]{kumchev} is used. As in \cite[Lemma~10]{kumchev}, since $1-\delta \leq \alpha^* \leq \frac{1}{2}$, the dependence on $Q=x^\delta$ is eliminated from the bounds on $\alpha$ and $\beta$.
\end{proof}

An analogue of Lemma~13 in \cite{bhp} follows immediately from the same arguments. It is stated here for ease of reference.

\begin{lemma}
\label{lem:lbhp13}
Suppose that $Q\leq zx^{-\delta-\eta}$. Let $M=x^{\alpha}$, $N_1=x^{\beta}$, $N_2=x^{\gamma}$, where $M(s,\chi),N_1(s,\chi),N_2(s,\chi)$ are of bounded product type. Suppose $\alpha\leq \frac{1}{2}$ and either
\begin{enumerate}[(i)]
\item
\begin{align*}
2\beta+\gamma&\leq 1+\delta-2\alpha^*-2\epsilon,\\
\gamma &\leq \frac{1}{4}(1+3\delta)-\alpha^*-\epsilon,\\
2\beta+3\gamma &\leq \frac{1}{2}(3+\delta)-2\alpha^*-2\epsilon,
\end{align*}
or
\item
\[ 
\beta\leq \frac{1}{2}(1-\theta),\qquad \gamma\leq \frac{1}{8}(1+3\theta-4\alpha^*)-\epsilon.
\]
\end{enumerate}
Let
\[
b_n=\sum_{\substack{n_1n_2=n\\ n_1\sim N_1,n_2\sim N_2}} A_{n_1} B_{n_2},
\]
where $A_{n_1},B_{n_2}$ are the coefficients of $N_1$ and $N_2$. Then equation~\eqref{eqn:errorbd} holds for
\[
c(k)=\sum_{\substack{mn\ell=k \\ m\sim M, n\sim N}} a_m b_n \psi(\ell,x^{\nu}),
\]
for every $\nu\leq \nu(\alpha)$.
\end{lemma}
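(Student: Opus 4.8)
The plan is to follow \cite[Lemma~13]{bhp} essentially verbatim, replacing its zeta-function mean-value inputs by the Dirichlet $L$-function analogues established above and tracking the resulting factors of $\theta$. Write $N=N_1N_2$ and $b_n=\sum_{n_1n_2=n}A_{n_1}B_{n_2}$ as in the statement, and choose $L$ with $MNL\asymp x$. By Lemma~\ref{lem:lbhp6} it suffices to prove that for every $T\le x$ and every $Q\le\min(zx^{-\delta-\eta},T)$,
\[
\Psi(T)\sum_{q\sim Q}\sideset{}{^*}\sum_{\chi}\int_{-T}^{T}\Big|F\Big(\tfrac12+it,\chi\Big)\Big|\,dt\ll Qz\LL^{-A},
\]
where $F(s,\chi)=M(s,\chi)N_1(s,\chi)N_2(s,\chi)\,\Pi(s,\chi)$ and $\Pi(s,\chi)=\sum_{\ell\sim L}\psi(\ell,x^{\nu})\chi(\ell)\ell^{-s}$; splitting the $\ell$-range into $O(\LL)$ dyadic blocks costs only a factor of $\LL$.

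I would first feed $\Pi(s,\chi)$ into Heath-Brown's identity in the shape of Lemma~\ref{lem:lbhp1} (this is where the $L$-function form of \cite{6bhp} is used), writing $|\Pi(\tfrac12+it,\chi)|\le g_1+\cdots+g_r$ with $r\le\LL^B$ and each $g_i=\LL^B\prod_{j=1}^{h}|\Pi_j(\tfrac12+it,\chi)|$, $h\le B$, a product of Dirichlet polynomials of bounded product type whose factors of length exceeding $(QT)^{1/2}$ are $L$-factors. Since $\psi(\cdot,x^{\nu})$ is supported on integers all of whose prime factors exceed $x^{\nu}$ and $\nu\le\nu(\alpha)$, the combinatorial sorting argument underlying \cite[Lemmas~12--13]{bhp} — driven by the parameters $h$, $I_h$, $\alpha^{*}$, and $\nu(\alpha)$ of Lemma~\ref{lem:lbhp12} — lets me regroup $M,N_1,N_2,\Pi_1,\dots,\Pi_h$ into a product $M'(s,\chi)N_1'(s,\chi)N_2'(s,\chi)K(s,\chi)$ of bounded product type with $K$ an $L$-factor, in which the $M'$-block has exponent at most $\alpha^{*}\le\delta+\theta$ and $N_1',N_2'$ have exponents bounded by $\beta$ and $\gamma$ up to $O(\epsilon)$; in the degenerate branch of the sorting, $N_1'$ comes out as an $L$-factor instead, which is the reason the hypotheses split into two cases.

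Once $F$ is in this form, case~(i) is exactly hypotheses (\ref{eqn:bhp3cond1})--(\ref{eqn:bhp3cond4}) of Lemma~\ref{lem:lbhp3} with the $M'$-exponent playing the role of $\alpha'$ there: using $\alpha^{*}\ge 1-\delta$, the $\theta$-corrections on the right-hand sides of those hypotheses are absorbed in exactly the way they were in passing from Lemma~\ref{lem:lbhp8} to Lemma~\ref{lem:lbhp12}, so Lemma~\ref{lem:lbhp3} gives the bound $\ll Qz\LL^{-A}$. Similarly, case~(ii) is exactly hypotheses (i)--(ii) of Lemma~\ref{lem:lbhp4}, with the $\theta$-dependence now retained explicitly, and Lemma~\ref{lem:lbhp4} closes the estimate. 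Summing over the $O(\LL^{B})$ terms $g_i$ and the $O(\LL)$ dyadic blocks and then invoking Lemma~\ref{lem:lbhp6} yields \eqref{eqn:errorbd}.

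The main obstacle will be the purely combinatorial bookkeeping: checking that the regrouping of the Heath-Brown factors always terminates with block sizes lying in the admissible ranges of Lemmas~\ref{lem:lbhp3} and~\ref{lem:lbhp4}, which is precisely what the choice of $\nu(\alpha)$ (together with the definition of $\alpha^{*}$ and the window $I_h$) is engineered to guarantee. This is the one place where the argument is not a literal translation of a single cited estimate; but it involves no $L$-function-specific features beyond Lemmas~\ref{lem:lbhp3}--\ref{lem:lbhp4} themselves, so it transfers from \cite{bhp} with only the substitution $\alpha'\mapsto\alpha^{*}$ and the tracking of the $\theta$-terms, exactly as in the proof of Lemma~\ref{lem:lbhp12}.
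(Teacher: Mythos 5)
Your proposal is essentially the paper's intended argument. The paper's own ``proof'' of Lemma~\ref{lem:lbhp13} is a single sentence stating that the analogue of \cite[Lemma~13]{bhp} ``follows immediately from the same arguments,'' leaving the reader to reconstruct the chain Lemma~\ref{lem:lbhp1} $\to$ Lemmas~\ref{lem:lbhp3}/\ref{lem:lbhp4} (with $\alpha'$ replaced by $\alpha^*$ exactly as in the proof of Lemma~\ref{lem:lbhp12}) $\to$ Lemma~\ref{lem:lbhp6}, which is precisely what you spell out.

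One caution about the phrasing: your claim that case~(ii) of Lemma~\ref{lem:lbhp13} is \emph{exactly} hypotheses~(i)--(ii) of Lemma~\ref{lem:lbhp4} does not hold for all $\theta$. With $\alpha'\mapsto\alpha^*$ and $\beta_1\mapsto\beta$, Lemma~\ref{lem:lbhp4}(i) asks for $\beta\le\tfrac12\theta+\tfrac12(1-\delta)$, whereas Lemma~\ref{lem:lbhp13}(ii) only imposes $\beta\le\tfrac12(1-\theta)$; when $\theta<\delta/2$ the latter is weaker, so the implication fails unless the $N_1'$-block is an $L$-factor (the alternative in Lemma~\ref{lem:lbhp4}(i)). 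You do gesture at this with your remark about the degenerate branch of the sorting, but the argument really needs to be that in every non-degenerate branch of the Heath-Brown regrouping the exponent of $N_1'$ satisfies the stronger bound, and in the degenerate branch $N_1'$ is an $L$-factor; as written it reads as though either Lemma~\ref{lem:lbhp13} condition forces the corresponding Lemma~\ref{lem:lbhp4} condition outright, which is not quite right. Likewise the ``exactly hypotheses~(\ref{eqn:bhp3cond2})--(\ref{eqn:bhp3cond4})'' claim in case~(i) holds only after observing that the $-2\epsilon$ in Lemma~\ref{lem:lbhp13}(i) dominates the $+O(\theta)$ slack in Lemma~\ref{lem:lbhp3}; you say this, but the word ``exactly'' oversells it. These are imprecisions in a sketch rather than gaps in the method, and since the paper itself supplies no more detail than you do, the approach and conclusion are fine.
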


The last result we need to borrow is an adaptation of Lemma~18 from \cite{bhp}. This gives a bound of the form
\begin{equation}
\label{eqn:lbhp18}
\Psi(T)\sum_{q\leq Q}\sideset{}{^*}\sum_{\chi}\int_{2}^T \left|L_1\left(\frac{1}{2}+it,\chi\right)\cdots L_\ell \left(\frac{1}{2}+it,\chi\right)\right|dt \ll Qz\LL^{-A},
\end{equation}
for $L_1\cdots L_\ell=x$, $\ell\geq 3$, $L_j=x^{\alpha_j}$, $\alpha_j\geq \epsilon$, assuming that $(\alpha_1,\dots,\alpha_\ell)$ lies in a particular region in $[0,1]^l$. The full list of bounds required is omitted for the sake of brevity; it is the same list as in the statement of \cite[Lemma~18]{bhp}, with only the additional condition that $Q\leq zx^{-\delta-\eta}$. The proof is likewise analogous to the proof of \cite[Lemma~18]{bhp}, with factors of $Q$ inserted before occurrences of $T$ in bounds (compare the difference between the proofs of \cite[Lemma~9]{bhp} and \cite[Theorem~4]{2bhp}). An application of Lemma~\ref{lem:lbhp6} then gives us an estimate on sums of the form
\[
\sum_{\substack{p_1\cdots p_{\ell-1}m=n\\p_i\sim L_i \forall i}} \psi(m,p_{\ell-1}),
\]
which will be useful in estimating terms in our final decomposition.

\subsection{Final Decomposition}
We finish the proof of Theorem~\ref{thm:kum525} by outlining the construction of a fine decomposition of $\psi(n,x^{\frac{1}{2}})$.
We start the process by following Kumchev's procedure. Let $w_0=x^{2\delta-1}$, and let $w(m)=x^{\nu(\alpha)}$, where $m=x^\alpha$ and $\nu(\alpha)$ is defined as in the statement of Lemma~\ref{lem:lbhp12}. Note that $w(m)\geq x^{2\delta-1}$ for all $m<x^{\frac{1}{2}}$.

Applying Buchstab's identity twice to $\psi(n,x^{\frac{1}{2}})$ gives
\[
\psi(n,x^{\frac{1}{2}})=\psi(n,w_0)-\sum_{\substack{n=mp\\w_0\leq p<x^{\frac{1}{2}}}} \psi(m,w(p))+\sum_{\substack{n=mp_1p_2\\w(p_1)\leq p_2<p_1<x^{\frac{1}{2}}}} \psi(m,p_2).
\]
We can set
\[
c_1(n)=\psi(n,w_0), \qquad c_{k+1}(n)=\sum_{\substack{n=mp\\w_0\leq p<x^{\frac{1}{2}}}} \psi(m,w(p)),
\]
and
\[
c_2(n)=\sum_{\substack{n=mp_1p_2\\w(p_1)\leq p_2<p_1<x^{\frac{1}{2}}\\p_2<w(p_1 p_2)}} \psi(m,p_2).
\]
As argued in \cite{kumchev}, $c_1,c_2,c_{k+1}$ are nonnegative by construction and satisfy properties~(1)-(3). It remains to further decompose the remaining sum
\begin{equation}
\label{eqn:buchmain}
\sum_{\substack{n=mp_1p_2\\w(p_1)\leq p_2<p_1<x^{\frac{1}{2}}\\p_2\geq w(p_1 p_2)}} \psi(m,p_2).
\end{equation}

Our goal, as in \cite{kumchev}, is to split the range of the sum in \eqref{eqn:buchmain} into several parts, then further decompose the sum over each part using Buchstab's identity. To make sure the decomposition we end with satisfies property~(3) of a fine decomposition, we want most of the terms $c$ to satisfy \eqref{eqn:errorbd}, so that we can set them to be $c_j$ for $j\leq r$ or $j\geq k+1$, depending on whether they are positive or not. To satisfy property~(4), we want the remaining terms $c$, which we set to be $c_j$ for $r+1\leq j\leq k$, to all be nonnegative and to not contribute much to the sum on the left hand side of \eqref{eqn:fine4}. Since all terms in our decomposition are constructed using repeated applications of Buchstab's identity with $w_2\geq w_0\geq x^{2\delta-1}$, properties~(1) and~(2) are automatically satisfied.

We decompose along the same lines as in \cite{bhp}, in order to obtain sharper bounds on the terms contributing to property~(4) than obtained by using Kumchev's decomposition \cite{kumchev}. For any multiset $\mathcal{E}$ of positive integers, the decomposition given in \cite{bhp} is presented as a decomposition of the function $S(\mathcal{E},z)$ defined by
\[
S(\mathcal{E},z)=\sum_{n\in \mathcal{E}} \psi(n,z).
\]
When $c_j$ is given in terms of the function $\psi$, the left hand side of \eqref{eqn:fine4} can be written in terms of the function $S$.
The transformation on sums over $S$ that BHP calls a ``r\^{o}le-reversal'' can also be applied to sums over $\psi$; we refer the reader to \cite[Section~4]{kumchev} for details.

First, we split off the part of \eqref{eqn:buchmain} with $p_1 p_2^2\geq x$. This term is $c_{r+1}(n)$ in the decomposition given by Kumchev, and his analysis shows that its contribution to the left hand side of \eqref{eqn:fine4} is $O\left(\frac{h_0}{\log^2 x}\right)=o(1)\frac{h_0}{\log x}$.

The remainder of the sum is split into six parts exactly as in \cite{bhp}.
Letting $p_1=x^{\alpha_1},p_2=x^{\alpha_2}$, we divide the set of pairs $(\alpha_1,\alpha_2)$ counted in the remaining sum into the regions
\begin{align*}
A:\: & \frac{1}{4}\leq \alpha_1\leq \frac{2}{5},\: \frac{1}{3}(1-\alpha_1)\leq \alpha_2\leq \min(\alpha_1,\frac{1}{2}(3\delta-1),1-2\alpha_1);\\
B:\: & \frac{1}{4}(3-3\delta)\leq \alpha_1\leq \frac{1}{2},\\
& \max(\frac{1}{2}\alpha_1,1-2\alpha_1)\leq \alpha_2\leq \min(\frac{1}{2}(3\delta-1),\frac{1}{2}(1-\alpha_1));\\
C:\: & \nu(0)\leq \alpha_1\leq \frac{1}{3}, \: \nu(\alpha_1)\leq \alpha_2\leq \min(\alpha_1,\frac{1}{3}(1-\alpha_1));\\
D:\: & \frac{1}{3}\leq \alpha_1\leq \frac{1}{2}, \: \nu(\alpha_1)\leq \alpha_2\leq \max(\frac{1}{3}(1-\alpha_1),\frac{1}{2}\alpha_1);\\
E:\: & \frac{1}{2}(3\delta-1)\leq \alpha_1\leq \frac{1}{4}(3-3\delta),\: \frac{1}{2}(3\delta-1)\leq \alpha_2\leq \min(\alpha_1,1-2\alpha_1); \\
F:\: & \frac{1}{3}\leq \alpha_1\leq 2-3\delta,\: \max(1-2\alpha_1,\frac{1}{2}(3\delta-1))\leq \alpha_2\leq \frac{1}{2}(1-\alpha_1).
\end{align*}

As in \cite{bhp}, we note that
$(\alpha_1,\alpha_2)\in A$ if and only if $(1-\alpha_1-\alpha_2,\alpha_2)\in B$,
and similarly for $E$ and $F$. Moreover, in all of these regions, if $\psi(m,p_2)=1$ and $n=mp_1p_2$, then $m$ must be prime. So, the contributions of the regions $A$ and $B$ are equal, and likewise those of $E$ and $F$ are equal.

We handle the sums over these regions using the same procedure as in \cite{bhp}. In place of \cite[Lemmas~12 and~13]{bhp}, we use the analogues Lemmas~\ref{lem:lbhp12} and~\ref{lem:lbhp13}. The result is a contribution of $\leq 0.3$ to the constant factor $\beta$ in \eqref{eqn:fine4} from $A\cup B$. For $E\cup F$, for simplicity of exposition we can place it among the $c_j$ for $r+1\leq j\leq k$. As noted in \cite{bhp}, the total contribution to $\beta$ from this is $\leq 0.09$. For $C$ and $D$, we again follow the argument of \cite{bhp} with the lemma replacements mentioned above where needed. The contribution to $\beta$ from $C$ and $D$ are $< 0.21$ and $<0.34$ respectively. Thus in total we have a fine decomposition with
\[
\beta \leq 0.3+0.09+0.21+0.34 \leq 0.94 < 1,
\]
as wanted.

\section*{Acknowledgements}
This research was conducted at the Emory University REU and was supported by NSF grant DMS-1557690. The authors thank Ken Ono and Jesse Thorner for suggesting the problem and for helpful guidance along the way.

\bibliographystyle{acm}

\bibliography{main}

\end{document}